\newsavebox\myboxA
\newsavebox\myboxB
\newlength\mylenA
\newcommand*\xoverline[2][0.75]{%
    \sbox{\myboxA}{$\m@th#2$}%
    \setbox\myboxB\null
    \ht\myboxB=\ht\myboxA%
    \dp\myboxB=\dp\myboxA%
    \wd\myboxB=#1\wd\myboxA
    \sbox\myboxB{$\m@th\overline{\copy\myboxB}$}
    \setlength\mylenA{\the\wd\myboxA}
    \addtolength\mylenA{-\the\wd\myboxB}%
    \ifdim\wd\myboxB<\wd\myboxA%
       \rlap{\hskip 0.5\mylenA\usebox\myboxB}{\usebox\myboxA}%
    \else
        \hskip -0.5\mylenA\rlap{\usebox\myboxA}{\hskip 0.5\mylenA\usebox\myboxB}%
    \fi}
\definecolor{dark_blue}{RGB}{46,87,144}
\definecolor{blue_pers}{RGB}{54,104,171}
\definecolor{grey_pers}{RGB}{245,245,245}
\definecolor{grey_ref}{RGB}{201,201,201}
\definecolor{red_pers}{RGB}{213,78,33}
\definecolor{green_pers}{RGB}{194, 239, 194}
\newcommand\revn[1]{\iffalse #1 \fi}
\tikzstyle{raw_text} = [rectangle, minimum height=4em]
\tikzstyle{box_text} = [rectangle, text width=2.4em, 
\tikzstyle{block_par} = [rectangle, draw, fill=grey_pers!30, 
    \tikzstyle{block_par_var} = [rectangle, draw, fill=grey_pers!30, 
\tikzstyle{block} = [rectangle, draw, fill=grey_pers!30, 
\tikzstyle{type} = [rectangle, draw, fill=grey_pers, 
\tikzstyle{objection} = [rectangle, draw, 
\tikzstyle{ref} = [rectangle, draw, fill=grey_pers, 
\tikzstyle{ref1} = [rectangle, draw, 
\tikzstyle{line} = [draw, -latex']
\tikzstyle{line_1} = [draw]
\tikzstyle{cloud} = [draw, ellipse,fill=grey_pers, node distance=3cm,
\tikzstyle{circle_blue} = [draw,circle,fill=grey_ref, node distance=2cm,
    \tikzstyle{circle_green} = [draw,circle,fill=green_pers,
\tikzstyle{circle_grey} = [draw,circle,fill=grey_pers, node distance=2cm,
\tikzstyle{circle_red} = [draw,circle,fill=red_pers!30, node distance=2cm,
\tikzstyle{back} = [rectangle, draw, 
\tikzstyle{back_b} = [rectangle, draw, fill=blue_pers!30, 
\tikzstyle{back_g} = [rectangle, draw, fill=grey_pers!30, 
\newcommand{\half}{\frac{1}{2}}
\newcommand{\ba}{{\bf a}}
\newcommand{\boldbeta}{{\boldsymbol{\beta}}}
\newcommand{\btheta}{{\boldsymbol{\theta}}}
\newcommand{\bb}{{\bf b}}
\newcommand{\bh}{\boldsymbol{h}}
\newcommand{\bxi}{\boldsymbol{\xi}}
\newcommand{\bn}{{\bf n}}
\newcommand{\bx}{{\bf x}}
\newcommand{\bv}{{\bf v}}
\newcommand{\bI}{{\bf I}}
\newcommand{\U}{{\textup{U}}}
\newcommand{\D}{\mathsf{d}}
\newcommand{\fr}{\mathsf{r}}
\newcommand{\fq}{\mathsf{q}}
\newcommand{\fk}{\mathsf{k}}
\newcommand{\fs}{\mathsf{s}}
\newcommand{\fK}{\mathsf{K}}
\newcommand{\fH}{\mathsf{H}}
\newcommand{\fX}{\mathsf{X}}
\newcommand{\fY}{\mathsf{Y}}
\newcommand{\fZ}{\mathsf{Z}}
\newcommand{\fD}{\mathsf{D}}
\newcommand{\fC}{\mathsf{C}}
\newcommand{\IN}{\mathbb{N}}
\newcommand{\IR}{\mathbb{R}}
\newcommand{\IH}{\mathbb{H}}
\newcommand{\loc}{{\textup{loc}}}
\newcommand{\divg}{\operatorname{div}}
\newcommand{\be}{\begin{equation}}
\newcommand{\ee}{\end{equation}}
\newcommand{\norm}[2]{\left\|#1\right\|_{#2}}
\newcommand{\Hloc}{H_{\textup{loc}}}
\newcommand{\eps}{\varepsilon}
\newcommand{\SRC}{\textup{SRC}}
\newcommand{\mB}{\mathcal{B}}
\newcommand{\mE}{\mathcal{E}}
\newcommand{\mH}{\mathcal{H}}
\newcommand{\mO}{\mathcal{O}}
\newcommand{\mfT}{\mathfrak{T}}
\definecolor{grey_pers}{RGB}{245,245,245}
\tikzstyle{write_text} = [rectangle, 
    \tikzstyle{block} = [rectangle, draw, fill=grey_pers!30, 
\tikzstyle{line} = [draw]
\definecolor{dark_green}{RGB}{0,100,0}
\definecolor{dark_blue}{RGB}{46,87,144}
\newcommand{\TheTitle}{Shape differentiability of Helmholtz scattering problems via explicit shape calculus}
\newcommand{\TheAuthors}{Paul Escapil-Inchausp\'e and Carlos Jerez-Hanckes}
\title{{\TheTitle}\thanks{Submitted to the editors \today.
\funding{This work was supported in part by Fondecyt Regular 1171491.}}
}
\author{
  Paul Escapil-Inchausp\'{e}\thanks{School of Engineering, Pontificia Universidad Cat\'olica de Chile, Santiago, Chile (\email{pescapil@uc.cl}).}
  \and
 Carlos Jerez-Hanckes\thanks{Faculty of Engineering and Sciences, Universidad Adolfo Ib\'a\~nez, Santiago, Chile (\email{carlos.jerez@uai.cl}).}}
\begin{document}
\date{\today}
\maketitle

\begin{abstract}
We consider the time-harmonic scalar wave scattering problems with Dirichlet, Neumann, impedance and transmission boundary conditions. Under this setting, we analyze how sensitive diffracted fields and Cauchy data are to small perturbations of a given nominal shape. To this end, we follow [K.~Eppler, \emph{Int.~J.~Appl.~Math.~Comput.~Sci.}~10(3) (2000), pp.~487-516], referred to as explicit shape calculus, and which places great emphasis on the characterization of the domain derivatives as boundary value problems. It consists in combining elliptic regularity theorems, shape calculus and functional analysis, allowing to deduce sharp differentiability results for the domain-to-solution and domain-to-Cauchy data mappings. The technique is applied to both classic and limit Sobolev regularity cases, leading to new and comprehensive differentiability results. 
\end{abstract}

\begin{keywords}Helmholtz equation, shape calculus, elliptic regularity theorems, Cauchy data, wave scattering
\end{keywords}

\begin{AMS}
78A45, 49Q12, 35J25, 35B65
\end{AMS}


\section{Introduction}\label{sec:intro}
Modern scientific and technological progress strongly rely on mastering wave propagation in unbounded domains. Areas ranging from biomedical imaging to remote sensing and space exploration greatly profit from the ever-increasing computational processing capacity to simulate wave scattering. The boundary value problems (BVPs) considered here involve solving the Helmholtz equation in unbounded domains supplemented by one or more different boundary conditions (BCs), namely, Dirichlet, Neumann, impedance and transmission ones.

Questions such as how do quantities of interest vary under a suitably defined perturbation from a given \emph{nominal domain} or how regular these perturbations are constitute the center of attention of shape sensitivity analysis. Besides the obvious applications of this analysis in optimization, one sees it in inverse problems ({\emph{cf}.~\cite[Chapter 4 and 5]{colton2012inverse} and \cite{hettlich1995frechet,hettlich1998frechet,Frechet,potthast1996frechet}), first-order second statistical moment approximation for uncertainty quantification \cite{dolz2018hierarchical,escapil2020helmholtz}, and even shape holomorphy analysis \cite{jerez2017electromagnetic}. Appealing to \emph{bi-Lipschitz diffeomorphisms} between nominal and perturbed domains, one carries out \emph{shape calculus}---see \cite{Allaire,antoine_var,Sokolowski} for an exhaustive overview of these techniques. Besides, \cite[Chapter 9]{delfour2011shapes} presents a comprehensive classification of shape differentiability properties---directional, G\^ ateaux and Fr\'echet derivative (resp.~differentiability).

Under domain derivatives (DDs) we group two closely related concepts in shape calculus: the \emph{material derivative} (MD) and  \emph{shape derivative} (SD). The former is generally defined as a classical G\^ateaux or Fr\'echet derivative in Banach spaces on a fixed domain, customarily allowing for regularity results related to the \emph{implicit function theorem} (see, e.g.,~\cite{Sokolowski,antoine_var}). As the latter has varying supports, its analysis is more involved being commonly built upon the MD, though prone to regularity loss due to the arising \emph{Lie derivative} (LD) \cite{hiptmair2017shape,costabel2012shape}. Furthermore, both DDs can be characterized as BVPs. Still, the SD has a simpler characterization as the solution of a BVP with \emph{Cauchy data} depending on the normal component of the velocity field, allowed by the Hadamard structure theorem (\emph{cf}.~\cite[Theorem 2.27]{Sokolowski}). The connection between both DDs can make their characterization confuse, and justifies their definition in a rigorous setting. Also, notice that it can be advantageous to use the so-called LD-based configuration, which consists in resorting to the MD to characterize the SD \cite{hiptmair2017shape} and conversely \cite{hagemann2019solving}.

To our knowledge, the SD for Helmholtz wave scattering was first computed by Kirsch \cite{kirsch1993domain}. Recently, a general framework to consider the DDs based on differential forms was presented by Hiptmair and Li \cite{hiptmair2013shape,hiptmair2017shape}, though explicit BVPs for the MDs were not provided. For less regular DDs, Bochniak and Cakoni considered DDs in nonsmooth domains \cite{bochniak2002domain} and Kleeman extended the work to nonsmooth perturbations \cite{kleemann2012shape}. Notwithstanding these remarks, and despite being critical topics for scattering problems, a precise characterization of the DDs' regularity with respect to the domain, data and transformation is still missing.

To summarize, the general procedure to prove differentiability results is achieved through application of the implicit function theorem to the MDs \cite{djellouli1999continuous,barucq2017frechet}. Rather independently, a characterization step follows ---characterization of the SDs as BVPs \cite{djellouli1999characterization,barucq2018mathematical}. In addition, there seems to be no available description  of the MDs as BVPs---aside from \cite{bochniak2002domain} for mixed scattering problems and the rather abstract framework \cite[Equation 4.3]{hiptmair2017shape}---nor of the DDs of the associated Cauchy data, referred to as CMD and CSD for material and shape derivative, respectively. 

In this work, we prove several differentiability and stability results concerning the domain-to-solution and domain-to-Cauchy data for a family of Helmholtz scattering problems. This is achieved via the \emph{explicit shape calculus}, a procedure followed by Eppler in \cite{eppleroptimal} which consists in two steps:

\begin{enumerate}
\item[(S1)] Characterization: describe explicitly the directional DDs throughout their BVPs;
 \item[(S2)] Construction: ensure a Banach space embedding to prove G\^ateaux differentiability, and deduce continuous Fr\'echet differentiability.
 \end{enumerate}
Notice that the core of explicit shape calculus is (S1), as the characterization is often tedious and requires to be as sharp as possible. This greatly improves the technique, as the analysis of BVP is a well-developed branch in mathematics. Well-posedness of the BVPs for the DDs---in particular their $H^1$-regularity---is key to study detailedly the continuous data dependence of quantities of interest.

Regarding (S2), it follows by simple arguments provided functional spaces are set such that the Banach space embedding is straightforward. This technique allows for a fine characterization of the differentiability properties, as it encloses (S1) to the simple concept of directional derivative, allowing for a general framework and adaptability. Besides, the BVPs keep track of the constants involved in the DDs and allow for a straightforward analysis. Further advantages of this procedure are
\begin{enumerate}
 \item[(i)] it is applicable to any type of shift/regularity theorem;
 \item[(ii)] it considers sharp regularity results in the limit case for smooth data, which is of interest for time-harmonic wave scattering (see \cite{costabel1988boundary,jerison1981dirichlet,jerison1981neumann,costabel2019limit,costabel1990remark} for Laplace case over Lipschitz domains);
 \item[(iii)] it is suitable for wavenumber analysis \cite{spence2014wavenumber,moiola2019acoustic,sauter2018stability,chaumont2018wavenumber} and paves the way towards existence results for the SD for Lipschitz domains and domains with corners.
\end{enumerate}
For this purpose, we split both domain and transformation smoothness and provide sharp regularity estimates for each DD respect to those parameters. Accordingly, we characterize and describe accurately the behavior of the CDDs.

For the sake of understanding, a comprehensive illustration is given in \cref{fig:Representation}, including depictions of the domains and acronyms used throughout. Along with this, a black-box representation of the regularity considerations for (S1) is provided in \cref{fig:GD}.

\begin{figure}[t]
 \begin{minipage}{0.25\linewidth}
\begin{figure}[H]
\centering
\includegraphics[width=1\textwidth]{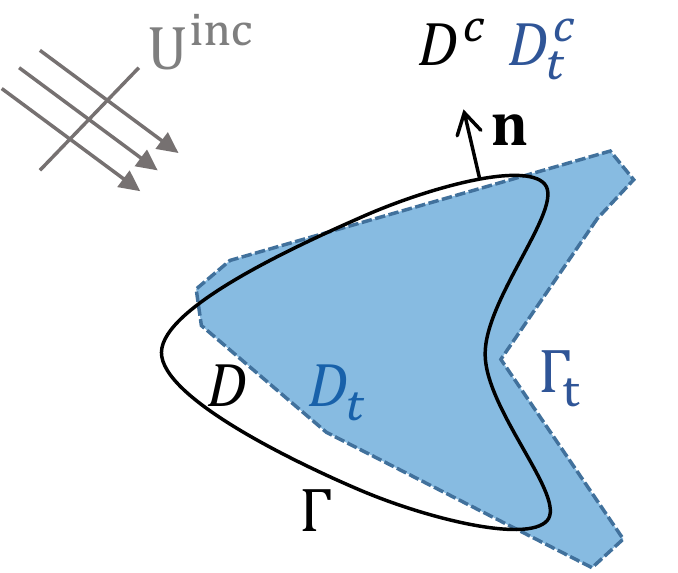}
\end{figure}
\end{minipage}
\begin{minipage}{0.44\linewidth}
\centering
\resizebox{5cm}{!} {
\begin{tabular}{|c|c|c|} \hline   
BVP & Boundary Value Problem \\ \hline
BC & Boundary Condition \\ \hline
DD & Domain Derivative \\ \hline
MD & Material Derivative\\ \hline
SD & Shape Derivative\\ \hline
LD & Lie Derivative\\ \hline
C $\cdot$ & $\cdot$ of Cauchy Data\\ \hline
\end{tabular}}
\end{minipage}
\begin{minipage}{0.25\linewidth}
\begin{figure}[H]
\includegraphics[width=1\textwidth]{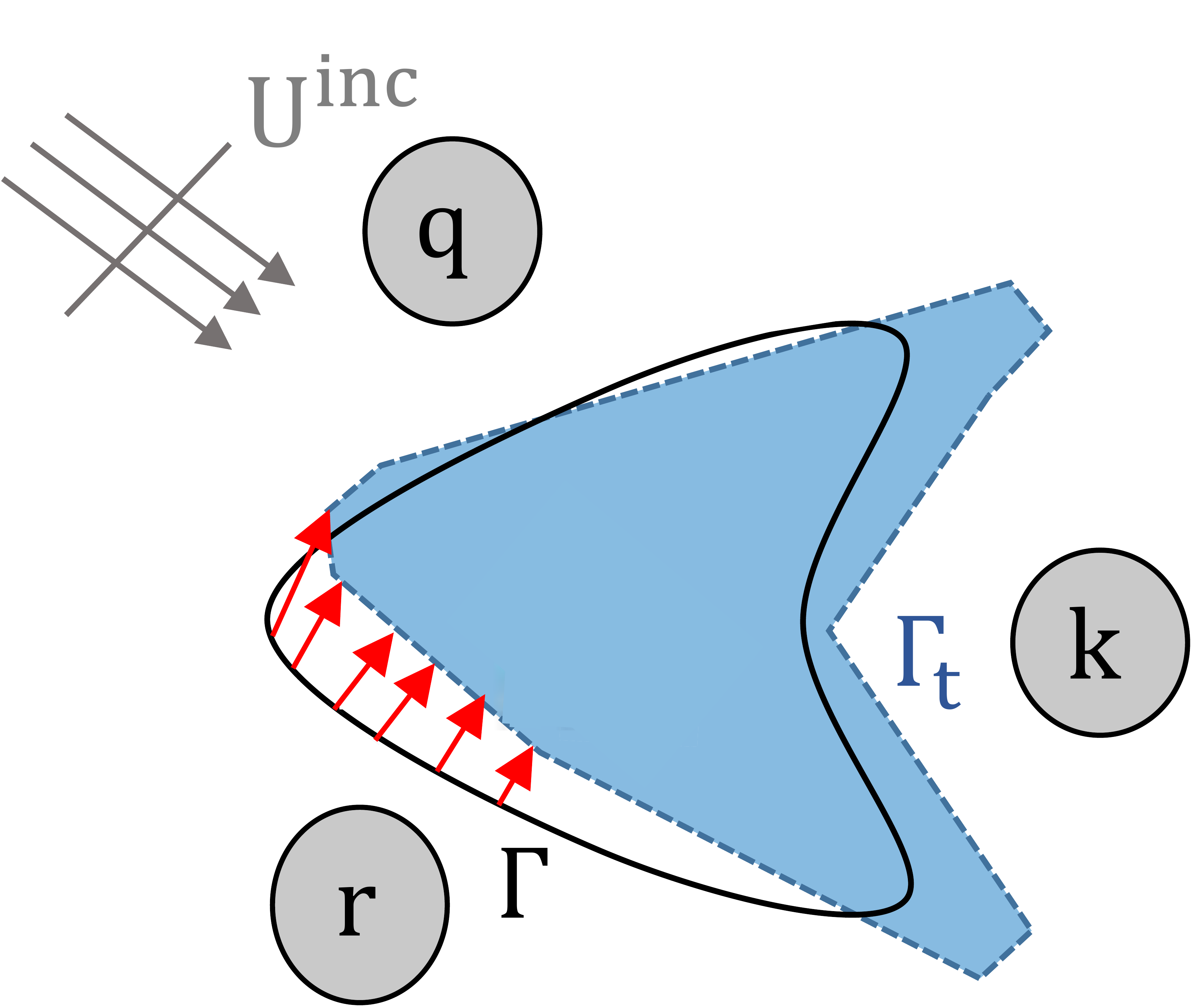}
\label{fig:fig2}
\end{figure}
\end{minipage}
\caption{Representations of domain transformation encircling a list of the important acronyms used throughout this work.}
\label{fig:Representation}
\end{figure}

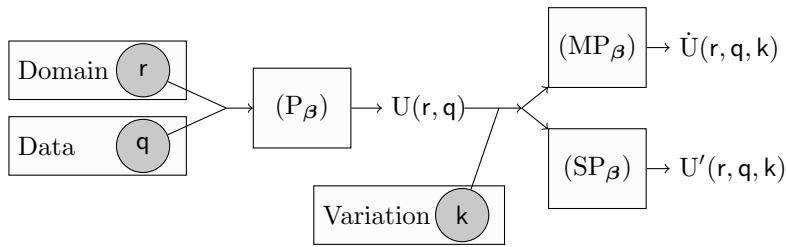
\begin{figure}[t]
\centering
\resizebox{10cm}{!} {
 \begin{tikzpicture}[node distance = 2cm, auto]
    \node [box_text] (init)  {};
    \node [block_par, above of = init, node distance =.5cm] (iii)  {Domain};
    \node [circle_blue, right of = iii, node distance = 0.6cm] (init1)  {$\fr$};
    \node [box_text,right of = init1, node distance=0.2cm] (i1) {} ;
    \node [box_text,right of = i1, node distance=1.5cm] (i11) {} ;

    \node [block_par, below of = init, node distance =.5cm] (iii2)  {Data};
    \node [circle_blue, right of= iii2, node distance =0.6cm] (init2)  {$\fq$};

    \node [box_text,right of = init2, node distance =0.3cm] (i2) {};
    \node [box_text,right of = i2, node distance =1.4cm] (i21) {};
  
   \node [block, right of =init, node distance = 2.7cm ] (func) {(P$_\boldbeta$)};
   \coordinate[ left of = func, node distance = 1cm] (a1);

   \node [box_text, right of =func, node distance =1.6cm] (init3) {$\U(\fr,\fq)$};

   \coordinate[ right of =init3, node distance = 1cm] (a2);
   \coordinate[ right of =a2, node distance = .3cm] (a3);

   \coordinate[ right of =a3, node distance = 1cm] (a4);

   \node [block_par_var, below right of = func, node distance = 2cm] (iii3)  {Variation};

   \node [circle_blue,right of =iii3, node distance =0.7cm] (init31) {$\fk$};

  \node [block, above of=a4, node distance =.8cm ] (funcMP) {(MP$_\boldbeta$)};
  \node [block, below of =a4, node distance =.8cm ] (funcSP) {(SP$_\boldbeta$)};
  
  \node [box_text,right of =funcMP, node distance =1.5cm] (finMP) {$\dot{\U}(\fr,\fq,\fk)$};
  \node [box_text,right of =funcSP, node distance =1.5cm] (finSP) {$\U'(\fr,\fq,\fk)$};
   \draw [-] (init1)--(a1) ;
   \draw [-] (init2)--(a1) ;
   \draw [->] (a1)--(func) ;
   \draw [->] (func)--(init3) ;
   \draw [-] (init3) -- (a2);
   \draw [-] (init31) -- (a2);
   \draw [->] (a2) -- (a3);
   \draw [->] (a3) -- (funcMP);
   \draw [->] (a3) -- (funcSP);
   \draw [->] (funcMP) -- (finMP);
   \draw [->] (funcSP) -- (finSP);
\end{tikzpicture}}
\caption{A diagram representation of the parametrized study of this manuscript in (S1) for a QoI $\U$ function to smoothness indices representing the initial domain $\fr$, the data for the problem $\fq$ and the domain variation $\fk$. The final output parameter are the DDs of $\U$. Notice that both DDs are function to $\fr,\fq,\fk$. }
\label{fig:GD}
\end{figure}

This manuscript is structured in the following way. First, we introduce the mathematical tools required in \cref{sec:mathematical_tools}, and we present our main results in \cref{sec:main_results}. The rest of this work is devoted to proving those results. To this end, we set a framework for domain sensitivity and describe the BVPs for the DDs in \cref{sec:Shape_calculus}. Afterwards, we apply twice the characterization step (S1) to both classic and limit shift theorems in \cref{sec:regularity} and end up the explicit shape calculus --and the formal proof of the results in \cref{sec:main_results}-- with \cref{sec:Banach}. Further research avenues are presented in \cref{sec:concl}.

\section{Mathematical tools}
\label{sec:mathematical_tools}
We set the basic definitions used all throughout in order to state the main results of this work.
\subsection{General notation}
\label{subs:prelim}
Throughout, vectors and matrices are expressed using bold symbols, $\ba \cdot \bb$ denotes the classical Euclidean inner product, $\norm{\cdot}{2}:=\sqrt{\ba \cdot \ba}$ refers to the Euclidean norm, $C$ is a generic positive constant and $o$, $\mO$ are respectively the usual little-$o$ and big-$\mO$ notations. Also, we set $\imath^2=-1$.

Let $D \subseteq  \IR^d$, with $d=2,3$, be an open set. By $B_R$ we denote a ball of radius $R>0$ centered at zero. For $p \in \IN_0:=\IN\cup\{0\}$, we denote by $C^{p}(D)$ the space of $p$-times differentiable functions over $D$, $C^{p,\alpha}(D)$ is the space of H\"older continuous functions with exponent $\alpha$, where $0<\alpha\leq1$. Also, let $L^p(D)$ be the standard class of functions with bounded $L^p$-norm over $D$. For a Banach space $X$ and an open set $T \subset \IR$, we introduce the usual Bochner space $C^p(T;X)$. For $s \in \IR$, $q\geq 0$, $p\in [1,\infty]$, we refer to \cite[Chapter 3]{MacLean} for the definitions of functional spaces $W^{s,p}(D)$, $H^s(D)$, $\Hloc^q(D)$ and $\Hloc^q(\Delta,D)$. Norms are denoted by $\norm{\cdot}{}$, with subscripts indicating the associated functional space.

\subsection{Lipschitz domains}\label{subs:Lipschitz}
For $r\in \IN_0$, notation $\Gamma \in C^{r,1}$, refers to an open bounded Lipschitz domain $D \subset \IR^d$, $d=2,3$, of class $C^{r,1}$ \cite[Definition~1.2.1.2]{Grisvard}, with boundary $\Gamma:=\partial D$. The exterior unit normal field $\bn\in W^{r,\infty}(\Gamma)$ is pointing by convention towards the exterior domain. Finally, the mean curvature reads $\mathfrak{H}:=\divg \bn$ and belongs to $W^{r-1,\infty}(\Gamma)$. 

\subsection{Traces and surface operators}
Let $D  \subset \IR^d$, with $d=2,3$ be an open bounded Lipschitz domain with boundary $\Gamma$ and complementary exterior domain $D^c:=\mathbb{R}^d \backslash \overline{D}$. Equivalently, we will write $D^0 \equiv D^c$ and $D^1\equiv D$ to refer to exterior and interior domains, respectively. Accordingly, when defining scalar fields in $D^c\cup D$, we use the shorthand $\U=(\U^0,\U^1)$. Let us introduce the continuous and surjective trace mappings \cite[Lemma 3.2]{MacLean}:
\begin{equation*}
\begin{array}{cl}
\textup{(Dirichlet trace)} & \gamma_0 : \Hloc^{1}(D^i) \to  H^{\frac{1}{2}}(\Gamma),\\
\noalign{\vspace{2pt}}
\textup{(Neumann trace)}   & \gamma_1 : \Hloc^{1}(\Delta,D^i) \to  H^{-\frac{1}{2}}(\Gamma).
\end{array}
\end{equation*}
For a suitable scalar field $\U^i$, $i=0,1$, we refer to a pair of traces $\bxi^i$ as \emph{Cauchy data} if
\begin{equation}
\label{eq:def_cauchy_data}
\bxi^i \equiv \begin{pmatrix}
\xi_0^i\\
\xi_1^i
\end{pmatrix} \equiv 
\begin{pmatrix}
\lambda^i\\
\sigma^i
\end{pmatrix}:=\begin{pmatrix}
\gamma_0\U^i\\
\gamma_1\U^i
\end{pmatrix} = \gamma \U^i,
\end{equation}
with their natural space being the Cartesian product:
\be\IH(\Gamma):=H^\half(\Gamma)\times H^{-\half}(\Gamma).\ee
Likewise, we introduce the second-order trace operator $\gamma_2\U^i := (\nabla^2\U^i |_\Gamma) \bn \cdot \bn  = \frac{\partial^2\U^i}{\partial \bn^2}\big|_\Gamma$. For infinitely smooth domains and functions, for all $s\in \IR$, the tangential gradient $\nabla_\Gamma$ is continuous from $H^{s+1}(\Gamma)$ to $[H^s(\Gamma)]^d$ and the tangential divergence $\divg_\Gamma$ is continuous from $[H^{s+1}(\Gamma)]^d$ to $H^{s}(\Gamma)$ \cite[Section 5]{costabel2012shapeO}. Trace jumps across $\Gamma$ are denoted by
$$
[\gamma_\beta \U]_\Gamma := \gamma_\beta \U^0 - \gamma_\beta \U^1\quad\text{for}\quad\beta \in \{0,1\}. 
$$
\subsection{Functional spaces}
In this work, we consider the Helmholtz problems in two and three dimensions. To begin with, we represent the physical domains by a positive wave speed $c$ and a material density constant $\mu$---representing, for instance, the permeability in electromagnetics. For time-harmonic excitations of angular frequency $\omega>0$, set the wavenumber $\kappa:=\omega/c$ and define the Helmholtz operator:
$$L_\kappa:\U \mapsto -\Delta \U - \kappa^2 \U.$$
For $d=2,3$ and $\kappa>0$, the Sommerfeld radiation condition (SRC) \cite[Section 2.2]{Nedelec} for $\U$ defined over $D^c$  reads
\be\label{eq:SRC}
\SRC(\U,\kappa) \iff \Big|\frac{\partial}{\partial r} \U-\imath \kappa \U\Big|  = o\big( |\bx|^{\frac{1-d}{2}}\big) \text{ for } |\bx| \to \infty.
\ee
This condition will guarantee uniqueness of solutions for the problems described below. Besides, for $q\geq 0$, we introduce the weighted Hilbert space incorporating the behavior at infinity \cite{bochniak2002domain}:
\be\label{eq:Weighted_Sobolev}
\|\U\|_{H^q_\kappa(D^c)} : =  \| \U / (1 + |\bx|^2)^{(d-1)/2}\|_{H^q(D^c)}.
\ee
For any function defined over $D^c$ (resp.~$D^c \cup D$), we use the sans-serif notation $H^q(\fD)$---with $\cdot$ being either $\textup{loc}$ or $\kappa$---as follows:
\be
H^q(\fD) := \left\{
\begin{array}{lll}H_\cdot^q (D^c) &\quad\text{if}\quad \fD =D^c, \\
H^q_\cdot(D^c) \times H^q(D)&\quad\text{if}\quad\fD =D^c \cup D.
\end{array}\right.
\ee
Consequently, $\fH^q(\fD)$ refers to either $H^q_\loc(\fD)$ or $H^q_\kappa(\fD)$, hence the final $\fH^q(\fD)$-notation.
\subsection{Helmholtz Problems Formulations}
\label{subsec:prob_helm_scat}
For an incident wave $\U^\text{inc}$, we introduce the following BVPs corresponding to Dirichlet, Neumann, impedance and transmission BCs. By linearity, we can write the total wave as a sum of scattered and incident terms, i.e.~$\U=\U^\text{sc}+\U^\text{inc}$. Notation (P$_\boldbeta$) with bold $\boldbeta$ will refer to any of the problems considered. Exterior problems (P$_\beta$), $\beta=0,1$, represent the \emph{sound-soft} and \emph{sound-hard} acoustic wave scattering while (P$_2$) and (P$_3$) are  the \emph{exterior impedance} and \emph{transmission} problems. 

Notice that (P$_\boldbeta$) is known to be well posed \cite[Chapter 4]{MacLean}. In \cref{tab:Notations_Problem}, we summarize notations for the different problems. To begin with, we set the BCs. Also, we detail $\fD$ the domain of definition of the BVPs, namely $\fD=D^c$ for $\beta \in \{0,1,2\}$ and $\fD:= D^c\cup D$ for $\beta=3$. To finish, we define boundary data spaces using calligraphic symbols.
\begin{table}[t]
\centering
\renewcommand\arraystretch{1.4}
\resizebox{13cm}{!} {
\begin{tabular}{|c|c|c|c|c|c|c|} \hline
 $\beta$ &  Problem  &  BCs &$\fD$  & \multicolumn{2}{|c|}{$\fH^1(\fD)$}  & $\mH(\Gamma)$\\ \hline\hline
 0 &  Sound-soft & $\gamma_0 \U= 0$ & $D^c$& $H^1_\textup{loc}(D^c)$  & $H_\kappa^1(D^c)$ & $H^\half(\Gamma)$\\ \hline
 1 &  Sound-hard & $\gamma_1 \U= 0$& $D^c$ &  $H^1_\textup{loc}(D^c)$ & $H_\kappa^1(D^c)$ &$H^{-\half}(\Gamma)$\\ \hline
 2 &  Impedance  & $\gamma_1 \U + \imath \eta \gamma_0 \U = 0$ & $D^c$&$H^1_\textup{loc}(D^c)$ &  $H_\kappa^1(D^c)$ &$H^{-\half}(\Gamma)$ \\ \hline
3&  Transmission &  $[\gamma_0\U]_\Gamma  = [\mu^{-1}\gamma_1\U]_\Gamma =0$ & $D^c \cup D$&$H^1_\textup{loc}(D^c)\times H^1(D)$ & $H_{\kappa_0}^1(D^c)\times H^1(D)$ & $H^\half(\Gamma) \times H^{-\half}(\Gamma)$\\  \hline
\end{tabular}}
\caption{Overview of problems (P$_\boldbeta$) with their respective BCs, domain and function spaces. Remark that for $\beta=3$, $\mH(\Gamma) = \IH(\Gamma)$, and that notations allow to define $\boldbeta$-independent function spaces for the BVPs.}
\label{tab:Notations_Problem}
\end{table}
\begin{subproblem}[P$_\beta$) ($\beta=0,1,2$]Given $\kappa>0$ and $\U^\textup{inc} \in H_{\textup{loc}}^1(D^c)$ with $L_\kappa \U^\textup{inc}=0$ in $D^c$, we seek $\U\in H_{\textup{loc}}^1(D^c)$ such that
$$
\left\{
\begin{array}{lll}
\Delta\U+ \kappa^2\U  = 0&\quad\text{in}\quad D^c,\\
\noalign{\vspace{3pt}}
  \gamma_{\beta} \U = 0&\quad\text{on}\quad \Gamma&\quad \text{if}\quad \beta \in \{0,1\}\quad\text{or}\\
  \noalign{\vspace{3pt}}
  \gamma_{1} \U  + \imath\eta \gamma_0 \U = 0,~\eta >0 &\quad\text{on}\quad\Gamma & \quad\text{if}\quad\beta = 2,\\
  \noalign{\vspace{3pt}}
    \SRC(\U^\textup{sc},\kappa).
\end{array}
\right.
$$
\end{subproblem}
\begin{subproblem}[P$_\textup{3}$]Let $\kappa_i,\mu_i>0$, $i=0,1$, with either $\kappa_0\neq \kappa_1$ or $\mu_0\neq\mu_1$, and $\U^\textup{inc} \in H_{\textup{loc}}^1(D^c)$ with $L_{\kappa_0} \U^\textup{inc}=0$ in $D^c$. We seek $(\U^0,\U^1) \in  H_{\textup{loc}}^1(D^c) \times H^1(D)$ such that
$$
\left\{
\begin{array}{lll}
\Delta \U^i+\kappa^2_i\U^i &= 0&\quad\text{in}\quad D^i\quad\text{for}\quad i=0,1,\\
\noalign{\vspace{3pt}}
  [\gamma_{0} \U]_\Gamma &= 0 &\quad\text{on}\quad \Gamma,\\
  \noalign{\vspace{3pt}}
  [\mu^{-1}\gamma_{1} \U]_\Gamma &= 0&\quad\text{on}\quad \Gamma,\\
  \noalign{\vspace{3pt}}
\SRC(\U^\textup{sc},\kappa_0).
\end{array}
\right.
$$
\end{subproblem}

\section{Main results}\label{sec:main_results}
We directly present the main results of this work, namely a precise characterization of the differentiability properties of the domain-to-solution and domain-to-Cauchy data mappings. For the sake of simplicity, we implemented an open-source Jupyter Notebook GUI\footnote{github/pescap/shift} that sums up those results in a easy-to-use manner. It allows to define the case of interest and obtain the respective regularity output spaces for all quantities concerned.
\begin{theorem}[MD]\label{thm:MD}Set $\fr \in \IN_1$ and $\fq \geq 1$. Consider a domain $\Gamma \in C^{\fr,1}$, the solution of (P$_\boldbeta$), and set
\be 
\dot{\fs} : =  \left\{ \begin{array}{lll}\overline{\fs}:=\min(\fq,\fk) \quad \text{for}\quad\beta = 0,\\\noalign{\vspace{2pt}}
\underline{\fs}:=\min\left(\fq, \fr - \half, \fk\right)\quad  \text{ for}\quad\beta = 1,2,3.\end{array}\right.
\ee
Then, the maps
\be
\begin{array}{llll}
\mfT  &\to & \fH^{\dot{\fs}+1}(\fD) & : \btheta \mapsto \U_\theta \circ T_\theta,  \\\noalign{\vspace{2pt}}
\mfT  &\to &\IH^{\dot{\fs}}(\Gamma) & : \btheta \mapsto (\gamma \U_\theta) \circ T_\theta,
\end{array}
\ee
\begin{enumerate}
  \item[(i)] are Fr\'echet differentiable in a neighborhood from zero for $\mfT \subseteq W^{\fr+1, \infty}(\fD)$;
  \item[(ii)] are $C^1$  for $\mfT \subseteq W^{\fk+1,\infty}(\fD)$;
  \item[(iii)] have the following stability estimates:
\begin{align*}
\|\U_\theta \circ T_\theta - \U\|_{\fH^{\dot{\fs}+1}(\fD)} &\leq C \|\btheta\|_{W^{\fk+1,\infty}(\fD)},\\
\|(\gamma\U_\theta) \circ \theta - \gamma\U\|_{\IH^{\dot{\fs}}(\Gamma)} &\leq C  \|\btheta\|_{W^{\fk+1,\infty}(\fD)}.
\end{align*}
\end{enumerate}
\end{theorem}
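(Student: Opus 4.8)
The plan is to run the \emph{explicit shape calculus} of the introduction: first transport the perturbed problem to the fixed reference configuration, then combine a shift theorem (step~(S1)) with a Banach-embedding/implicit-function argument (step~(S2)). Fixing the velocity field, write $T_\btheta$ for the associated bi-Lipschitz diffeomorphism, normalised so that $T_\btheta=\textup{Id}$ outside a ball $B_R\supset\overline{D}$ containing $\operatorname{supp}\btheta$. Setting $u_\btheta:=\U_\btheta\circ T_\btheta$ on $\fD$, the compact support of $\btheta$ leaves the Sommerfeld condition untouched, so $u_\btheta$ solves a problem of type (P$_\boldbeta$) in which $\Delta$ is replaced by $\divg(\bA_\btheta\nabla\,\cdot\,)$ and the zeroth-order term by $\kappa^2 J_\btheta$, with $\bA_\btheta=J_\btheta(DT_\btheta)^{-1}(DT_\btheta)^{-\top}$ and $J_\btheta=\det DT_\btheta$, and in which $\gamma_1$, the coefficients $\eta$, $\mu^{-1}$ and the incident datum are replaced by their pullbacks; at $\btheta=0$ this is exactly (P$_\boldbeta$). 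The domain-to-solution map has thereby become the fixed-domain map $\btheta\mapsto u_\btheta$, and the domain-to-Cauchy-data map is its composition with the continuous trace $\gamma$.

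Two analytic facts then feed step~(S2): (a) the coefficient map $\btheta\mapsto(\bA_\btheta,J_\btheta,\dots)$ is smooth --- indeed real-analytic --- from a neighbourhood of $0$ in $W^{m+1,\infty}(\fD)$ into $W^{m,\infty}$, losing exactly one derivative, with $\bA_0=\bI$ and $J_0=1$; and (b) for $\norm{\btheta}{W^{1,\infty}(\fD)}$ small the transported problem is well posed with solution operator bounded uniformly in $\btheta$, which follows from the well-posedness of (P$_\boldbeta$) at $\btheta=0$ together with a Neumann-series perturbation, the perturbation of the sesquilinear form being $\mO\big(\norm{\btheta}{W^{1,\infty}(\fD)}\big)$.

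The core is step~(S1): differentiating the transported problem in $\btheta$ produces the boundary value problem (MP$_\boldbeta$) for the material derivative $\dot{\U}$, whose volume source is $-\divg(\dot{\bA}\,\nabla\U)-\kappa^2\dot{J}\,\U$ and whose boundary datum is the derivative of the transported boundary condition --- for $\beta=1,2,3$ this couples tangential surface operators, the mean curvature $\mathfrak{H}\in W^{\fr-1,\infty}(\Gamma)$ and $\gamma_1\U$, while for $\beta=0$ it stays homogeneous. Now apply the shift theorem of \cref{sec:regularity} twice. Applied to (P$_\boldbeta$) it gives $\U\in\fH^{s_0+1}(\fD)$ with $s_0=\fq$ for $\beta=0$ and $s_0=\min(\fq,\fr-\half)$ for $\beta=1,2,3$, the loss of $\half$ being precisely the regularity the curvature and Neumann-trace terms permit on a $C^{\fr,1}$ boundary. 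Applied to (MP$_\boldbeta$), whose source loses one derivative through $\dot{\bA}\in W^{\fk,\infty}$, it gives $\dot{\U}\in\fH^{\dot{\fs}+1}(\fD)$ with $\dot{\fs}=\min(s_0,\fk)$, i.e.\ exactly $\overline{\fs}$ resp.\ $\underline{\fs}$; the trace statement $\gamma\dot{\U}\in\IH^{\dot{\fs}}(\Gamma)$ is then the mapping property of $\gamma_0,\gamma_1$. Running the same computation on the difference $u_\btheta-\U$, whose defining equation carries a right-hand side of size $\mO\big(\norm{\btheta}{W^{\fk+1,\infty}(\fD)}\big)$ in $\fH^{\dot{\fs}-1}(\fD)$, and inverting the operator of (P$_\boldbeta$) delivers the stability estimates~(iii).

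For step~(S2), with (a)--(b) the map $F(\btheta,u):=[\text{transported operator}]\,u-[\text{transported data}]$ is $C^1$ (indeed analytic) in $\btheta$ and $\partial_u F(\btheta,u_\btheta)$ is a Banach-space isomorphism, so the implicit function theorem makes $\btheta\mapsto u_\btheta$ a $C^1$ map into $\fH^1(\fD)$; bootstrapping the linearised equation with the shift theorem upgrades the target to $\fH^{\dot{\fs}+1}(\fD)$, the number of derivatives this demands of $\btheta$ --- $\fk+1$, or $\fr+1$ when one anchors the estimate to the $C^{\fr,1}$-regularity of the reference domain --- being exactly what places the coefficient map into the matching space; this yields (ii) and, a fortiori, (i), and it transfers to the Cauchy data through $\gamma$. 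I expect the one genuinely delicate point to be the sharp form of (S1) for $\beta=1,2,3$: one must track precisely where each derivative is spent in the transported boundary condition so that the curvature-induced loss is exactly $\half$ and no worse, and then carry that bookkeeping through the limit Sobolev case; the remainder, in particular all of step~(S2), is then a routine application of the implicit function theorem.
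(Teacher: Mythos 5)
Your architecture matches the paper's: characterize the material derivative as the explicit BVP (MP$_\boldbeta$), feed the shift theorem into its data, and then pass from directional to Fr\'echet differentiability. The final exponents you obtain agree with $\overline{\fs}$ and $\underline{\fs}$. However, there are three concrete problems. First, and most importantly, you treat the second map $\btheta\mapsto(\gamma\U_\theta)\circ T_\theta$ as a composition with a fixed trace operator, so that its derivative would be $\gamma\dot{\U}$. It is not: while the Dirichlet component is indeed $\gamma_0\dot{\U}$, the pulled-back Neumann trace involves the moving normal, and its material derivative is $\gamma_1\dot{\U}-\nabla\bv^{T}\U\cdot\bn-\nabla_\Gamma\U\cdot\nabla_\Gamma(\bv\cdot\bn)$ (the paper's Theorem~4.4). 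These extra terms must have their regularity checked separately --- the paper does a dedicated $\min$-computation for $\dot{\sigma}$ at the end of the proof of its Theorem~5.5 --- and your proposal omits this entirely. Second, you misplace the origin of the $\fr-\half$ cap: the shift theorem applied to (P$_\boldbeta$) is $\beta$-independent (it gives $\U\in\fH^{\min(\fq,\fr+\half)+1}(\fD)$ in the sharp version; there is no $\fr-\half$ loss for Neumann or impedance problems at that stage). The cap $\fr-\half$ in $\underline{\fs}$ arises only in the boundary data $m_\beta$ of (MP$_\boldbeta$), through the factor $\nabla_\Gamma(\bv\cdot\bn)$ whose smoothness is limited by $\bn\in W^{\fr,\infty}(\Gamma)$; the case $\beta=0$ escapes it solely because $m_0=0$. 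Your formulas happen to coincide numerically, but since you yourself identify this bookkeeping as the crux, the mechanism needs to be located correctly --- otherwise one cannot see, for instance, why the sharp shift theorem does not improve $\underline{\fs}$.

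Third, your step (S2) runs through the implicit function theorem, which is precisely the route the paper sets itself apart from. The IFT gives $C^1$ dependence of $u_\btheta$ in the energy space $\fH^1(\fD)$; claiming the maps are $C^1$ \emph{into} $\fH^{\dot{\fs}+1}(\fD)$ requires controlling the first-order Taylor remainder in that stronger norm, not merely observing that $\dot{\U}$ happens to lie in it. A regularity bootstrap of the linearized equation upgrades the derivative, not the topology of differentiability. The paper closes this by proving the stability estimates of its Corollaries~5.6 and~5.9 directly in $\fH^{\dot{\fs}+1}(\fD)$ and $\IH^{\dot{\fs}}(\Gamma)$ (linear in $\|\bv\|_{W^{\fk+1,\infty}}$, via continuous data dependence for (MP$_\boldbeta$)) and then applying a mean-value argument to the BVP satisfied by $\dot{\U}_\theta-\dot{\U}$. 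If you keep the IFT route you must add this remainder estimate explicitly; as written, (ii) and (iii) in the target norms do not follow.
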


\begin{theorem}[SD]\label{thm:SD}Set $\fr \in \IN_1$ and $\fq \geq 0$. Consider a domain $\Gamma \in C^{\fr,1}$, the solution of (P$_\boldbeta$), and set
\be 
\tilde{\fs} : = \min\left( \fq-1, \fr - \half, \fk + \half\right).
\ee
Then, it holds that the maps 
\be
\begin{array}{llll}
\mfT &\to &\fH^{\tilde{\fs} + 1 } (\fD) & : \btheta \mapsto \U_\theta,  \\\noalign{\vspace{2pt}}
\mfT &\to &\IH^{\tilde{\fs}}(\Gamma)& : \btheta \mapsto \gamma \U_\theta,
\end{array}
\ee
\begin{enumerate}
  \item[(i)] are Fr\'echet differentiable in a neighborhood from zero for $\mfT \subseteq W^{\fr+ 1 , \infty}(\fD)$;
  \item[(ii)] are $C^1$  for $\mfT \subseteq W^{\fk+1,\infty}(\fD)$;
  \item[(iii)] have the following stability estimates:
\begin{align*}
\| (\tilde{\U}_\theta)  - \U\|_{\fH^{\tilde{\fs} + 1 } (\fD)} &\leq C  \|\btheta\|_{W^{\fk+1,\infty}(\fD)},\\
\| (\gamma\tilde{\U}_\theta)  - \gamma\U\|_{\IH^{\tilde{\fs}}(\Gamma)} &\leq C \|\btheta\|_{W^{\fk+1,\infty}(\fD)},
\end{align*}
\end{enumerate}
with tilde notation referring to a suitably defined extension of $\U_\theta$.
\end{theorem}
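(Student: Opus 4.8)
The plan is to deduce Theorem~\ref{thm:SD} from Theorem~\ref{thm:MD} by exploiting the algebraic relation between the shape derivative and the material derivative, namely the Lie-derivative correction $\U' = \dot{\U} - \nabla\U\cdot\btheta$ (more precisely, on the perturbed domain, $\tilde\U_\theta = \U_\theta\circ T_\theta - (\text{Lie term})$, suitably extended). Concretely, I would first establish step (S1) for the SD: starting from the BVP characterization of the MD already obtained in \cref{sec:Shape_calculus,sec:regularity}, subtract the Lie derivative term to write down the BVP satisfied by $\U'$, whose Cauchy data depend only on the normal component $\btheta\cdot\bn$ of the velocity field (Hadamard structure theorem, \cite[Theorem 2.27]{Sokolowski}). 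The key quantitative observation is that passing from $\dot\U$ to $\U'$ costs exactly one derivative on $\btheta$ but, because the correction lives on the boundary through $\btheta\cdot\bn$ and the trace/lifting gains half a derivative, the effective smoothness index shifts from $\min(\fq,\fr-\tfrac12,\fk)$ to $\min(\fq-1,\fr-\tfrac12,\fk+\tfrac12)=\tilde\fs$; this is why $\fq\ge 0$ (rather than $\fq\ge 1$) now suffices and why $\fk$ is replaced by $\fk+\tfrac12$.

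Next I would carry out the construction step (S2) exactly as for the MD: with $\mfT$ chosen so that the relevant Banach embedding $W^{\fk+1,\infty}\hookrightarrow W^{\fr+1,\infty}$ (resp.\ the reverse inclusion of admissible perturbation spaces) is immediate, G\^ateaux differentiability of $\btheta\mapsto\U_\theta$ and $\btheta\mapsto\gamma\U_\theta$ follows from the explicit directional-derivative BVP, and continuity of the derivative in $\btheta$ upgrades this to $C^1$, hence to Fr\'echet differentiability on a neighborhood of $0$. For part~(iii), the stability estimates come from well-posedness of the BVP for $\U'$ together with the elliptic shift estimate: the solution operator of (SP$_\boldbeta$) is bounded from the data space (which is controlled by $\|\btheta\|_{W^{\fk+1,\infty}}$ via the Cauchy data built from $\btheta\cdot\bn$) into $\fH^{\tilde\fs+1}(\fD)$, and the trace theorem then gives the $\IH^{\tilde\fs}(\Gamma)$ bound; the tilde extension is the standard device (e.g.\ a fixed bounded extension operator, or transport by $T_\theta^{-1}$ away from a neighborhood of $\Gamma$) needed because $\U'$ is naturally defined on the perturbed domain $\fD_\theta$ while we wish to compare with $\U$ on the nominal $\fD$.

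The main obstacle I anticipate is the sharp bookkeeping in (S1): one must verify that the Lie/Hadamard correction term $\nabla\U\cdot\btheta$ (and its trace and co-normal trace) lies in precisely the space that makes $\tilde\fs$, and not something half a derivative worse, the correct index — in particular handling the limit Sobolev cases where $\fq-1$, $\fr-\tfrac12$ or $\fk+\tfrac12$ is the binding constraint, and checking the transmission problem $\beta=3$ where the jump conditions $[\gamma_0\U']_\Gamma$ and $[\mu^{-1}\gamma_1\U']_\Gamma$ pick up curvature terms ($\mathfrak H\in W^{\fr-1,\infty}(\Gamma)$) that must be absorbed without further loss. A secondary technical point is ensuring the various constants $C$ are uniform over the neighborhood of $\btheta=0$, which follows from continuity of $\btheta\mapsto T_\theta$ into the diffeomorphism group and the resulting uniform bounds on the transported coefficients, exactly as in the proof of Theorem~\ref{thm:MD}.
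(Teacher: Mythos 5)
Your overall architecture is the right one and coincides with the paper's at the construction stage: the paper also proves \cref{thm:SD} by (S1) establishing regularity for the boundary-value problem solved by $\U'$ and then (S2) upgrading directional derivatives to Fr\'echet/$C^1$ statements and stability bounds via continuous data dependence in \cref{sec:Banach}. Where you diverge is in how the SD's BVP and its sharp index are obtained. The paper does \emph{not} route through the material derivative: it takes the problems (SP$_\boldbeta$) of \cref{subs:helmholtz_scattering_shape} as the starting point (they have no volume source and their data $g_\boldbeta$ involve only $\gamma_0\U$, $\gamma_1\U$, $\nabla_\Gamma\U$ and $(\bv\cdot\bn)$), and in \cref{thm:regularitySD,thm:sharp_regularitySD} it reads off $\tilde{\fs}=\min(\fq-1,\fr-\half,\fk+\half)$ directly from the multiplier lemma applied to $g_\boldbeta$, combined with the sharp (Dahlberg/Jerison--Kenig) shift theorem \cref{thm:sharp_shift} to produce the $\fr-\half$ term in the limit case $\fq\geq\fr$. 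Your Lie-derivative detour is a legitimate way to \emph{derive} (SP$_\boldbeta$), but as an \emph{estimation} strategy it is strictly weaker, and the paper explicitly points this out: when $\fk+\tfrac32\leq\fr$ one has $\dot\U\in\fH^{\fk+1}(\fD)$ while $\U'\in\fH^{\fk+\frac32}(\fD)$, so any bound of the form $\|\U'\|\leq\|\dot\U\|+\|\nabla\U\cdot\bv\|$ loses half a derivative exactly in the regime where $\fk+\half$ is binding. You half-acknowledge this ("the correction lives on the boundary\dots gains half a derivative"), but the Lie term $\nabla\U\cdot\bv$ is a volume quantity of regularity $\min(\fq,\fk+1)$, so the gain can only be recovered by re-solving the SD's own BVP, not by subtraction; your write-up should commit unambiguously to the former.

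There is one genuine gap in the proposed route: the range $\fq\geq 0$. \cref{thm:MD} (and the underlying \cref{thm:regularityMD}) requires $\fq\geq 1$ and $\fk\geq 1$ precisely because the MD's volume source $f_\boldbeta=\divg(A'(0)\nabla\U)+\kappa^2\divg(\bv)\U$ must land in a usable (tilde-)Sobolev space; the paper's remark on source terms notes that for $\fq,\fk\in[0,1]$ this fails or becomes delicate. Since your plan obtains $\U'$ \emph{from} $\dot\U$, it inherits these restrictions and cannot, as stated, cover $0\leq\fq<1$, which is part of the claim of \cref{thm:SD}. The paper sidesteps this entirely because (SP$_\boldbeta$) is source-free: its data $g_\boldbeta\Rrightarrow\tilde{\fs}\mp\half$ make sense and are controlled by $\|\bv\|_{W^{\fk+1,\infty}}$ already for $\fq\geq 0$ (\cref{coro:sharp_SD}). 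To close your argument you would either need to justify the subtraction identity in a distributional sense for low $\fq$ and then still analyze the resulting source-free problem directly, or simply adopt the paper's starting point.
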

The formal proofs of \cref{thm:MD} and \cref{thm:SD} are achieved following the explicit shape calculus: we perform (S1) in \cref{sec:regularity} and finish the proof with (S2) in \cref{sec:Banach}.
\section{Shape calculus for Helmholtz scattering problems}\label{sec:Shape_calculus}
Before proving our main results, we introduce the tools required to perform the explicit shape calculus.
\subsection{Directional domain transformations}
\label{subs:domain_transformation}
Set $r \in \IN_0$ and consider $\Gamma \in C^{r,1}$. One can introduce the family of transformed boundaries:
\be
\label{eq:transformed_boundary}
\Gamma_t := \{ \bx + t \bv : \bx \in \Gamma \} = T_t (\Gamma),
\ee
with a \emph{velocity field} $\bv \in W^{k+1,\infty}(\Gamma;\IR^d)\equiv W^{k+1,\infty}(\Gamma)$, $0 \leq  k \leq r$. Clearly, $\Gamma_0\equiv\Gamma$. Following \cite[Section 2.8]{Sokolowski}, there is a small $\eps>0$ such that $T(t,\bx):=T_t(\bx)$ belongs to $T \in C^1\left((-\eps,\eps); W^{k+1,\infty}(\Gamma)\right)$, and that $T_t$ is an one-to-one bijective transformation from $\Gamma$ to $\Gamma_t:=T_t(\Gamma)$ for $|t| < \eps$. Fixing $|t|<\eps$, the perturbed boundary $\Gamma_t$ originates a domain $D_t$ of class $C^{k,1}$. Moreover, by the continuity of $\bv$ on the compact surface $\Gamma$, the velocity field can be extended to $D$, and even to $\IR^d$ by using a bounded hold-all domain such that $\bv=0$ outside of it---e.g., $B_R$ or even a $\Gamma$-tubular neighborhood $\Gamma_\mE$, with $\tilde{\Gamma} :=\fD \cap \Gamma_\mE$. Finally, we remark that $(\bv \cdot \bn) \in W^{\min(r,k+1),\infty}(\Gamma)$ and introduce the Jacobian matrix of $\bv$ denoted $\nabla \bv \in W^{k,\infty}(\Gamma)$ along with 
\be 
A'(0):= \divg \bv \bI - \nabla \bv - \nabla \bv^T \in W^{k,\infty}(\tilde{\Gamma}).
\ee
\subsection{Directional domain derivatives}
\label{subs:domain_derivatives}
With the domain transformation and velocity field defined in \cref{subs:domain_transformation}, we are ready to introduce both DDs.
\begin{definition}[Domain derivatives]
\label{def:domain_derivative}Consider a shape dependent scalar field $\U_t$ defined in $\fD_t$ corresponding to $D_t^c$ or $D_t^c \cup D_t$ for $|t|< \eps$ and denote $\U\equiv\U_0$ for the nominal domain solution. $\U_t$ admits a SD (resp.~a MD), denoted by  $\U'$ (resp.~$\dot{\U}$) in $\fD$ along $\bv$, if the following limit exists
\be
\label{eq:ddev}
\U':= \lim_{t \to 0}\frac{\U_t-\U}{t} \quad\text{or}\quad\dot{\U}:=\lim_{t \to 0}\frac{\U_t\circ T_t-\U}{t}. 
\ee
Assuming that $\dot{\U} \in \fH^q(\fD)$, $q \geq 0$, then the following Taylor expansion holds for $|t|<\eps$:
\be
\U_t \circ T_t(\bx) = \U(\bx) + t \dot{\U}(\bx)+o(t)\quad\text{in}\quad\fH^q(\fD).\nonumber
\ee
Assuming that $\U' \in \fH^q(\fD)$, $q \geq 0$, then the following local Taylor expansions hold for $|t|<\eps$ and any suitably defined extension $\tilde{\U}_t \in \fH^q(\fD) $:
\be \label{eq:SD_Taylor}
\begin{array}{ccl}
\tilde{\U}_t(\bx)&= \U(\bx) + t \U'(\bx)+o(t) &\text{in}\quad\fH^q(\fD)\quad\text{and}\\
\U_t(\bx)&= \U(\bx) + t \U'(\bx)+o(t) &\text{in}\quad \fH_\textup{kloc}^q(\fD).
\end{array}
\ee
Morever, the SD and MD are linked as follows
\begin{align}
\dot{\U}(\bx)&= \U'(\bx) + \nabla \U(\bx) \cdot \bv(\bx).\label{eq:lie}
\end{align}
\end{definition}

We first point out that \cref{eq:ddev} is the usual definition for the DDs. Secondly, \cref{eq:lie} relates the SD and MD modulo $\nabla \U \cdot \bv$---the Lie derivative (see \cite[Definition 1]{hiptmair2013shape}). Second notation in \cref{eq:SD_Taylor} uses the $\text{kloc}$-subscript representing the classic local integration space i.e.~in $\fH^q(\fK)$ on any compact $\fK \Subset \fD$. Extension of these DDs to Sobolev spaces will rely on their characterization as BVPs as detailed later on. Notice that due to its definition on varying domains, the shape Taylor expansion for the SD is only valid by using a smooth extension, or on compact subsets of $\fD$, in order to guarantee a conventional Banach setting. The latter is not amenable to define such functions close to the boundary of the domains. Similarly, we define the DDs of functions defined on the boundary $\Gamma$.
\begin{definition}[Boundary domain derivatives]
\label{def:boundary_domain_derivative}
Consider a shape dependent scalar field $u_t$ defined on $\Gamma_t$  for $|t|<\eps$. Thus, $u_t$ admits a SD (resp.~a MD), denoted $u'$ (resp.~$\dot{u}$) on $\Gamma$ in the direction $\bv$, if the following limits exist
\be
\label{eq:boundary_ddev}
u':= \lim_{t \to 0}\frac{u_t-u}{t} \quad\text{and}\quad\dot{u}:=\lim_{t \to 0}\frac{u_t\circ T_t-u}{t}, 
\ee
respectively. Assuming that $\dot{u} \in H^q(\Gamma)$, $q\in \IR$, the following Taylor expansion holds for $|t| <\eps$:
\be\label{eq:boundary_md}
u_t \circ T_t(\bx) = u(\bx) + t \dot{u}(\bx)+o(t)\quad\text{ in }\quad H^q(\Gamma).
\ee
Assuming that $u' \in H^q(\Gamma)$, $q\in \IR$ and that $u_t$ admits an extension $\tilde{u}_t\in H^q(\Gamma)$:
\be
\tilde{u}_t(\bx) = u(\bx) + t u'(\bx)+o(t)\quad\text{in}\quad H^q(\Gamma).\label{eq:boundary_sd}
\ee
Moreover, both DDs are related according to
\be
\dot{u}(\bx)= u'(\bx) + \nabla_\Gamma u(\bx) \cdot \bv(\bx).\label{eq:boundary_lie}
\ee
\end{definition}

\begin{remark}[Starshaped domains]
\cref{def:domain_derivative,def:boundary_domain_derivative} consider Taylor expansion in their least restrictive and most natural spaces. They clear the way for the introduction of adapted Banach spaces to derive differentiability properties. Besides, if $D$ is starshaped, we introduce the parametrization $\rho$ in \cref{subs:Lipschitz} and assume that $\bv(\rho)$. This allows for the restriction of shape calculus to the boundary as $\Gamma \equiv \rho(\phi)$, and to consider Taylor expansions in $I$, e.g.~in periodic functions on $C_\text{per}^k(I)$ (refer to \cite[Remark 3]{eppleroptimal}).\end{remark}

\subsection{Material derivatives for Helmholtz scattering problems}
\label{subs:helmholtz_scattering_material}
Consider a Lipschitz domain and transformation. From Kleemann \cite{kleemann2012shape}, we summarize the problems (MP$_\boldbeta$) satisfied by the associated MD.
\begin{subproblem}[MP$_\beta$) ($\beta=0,1,2$]\label{pb:MD_P012}We seek $\dot{\U}$ as the solution of
$$
\left\{
\label{eq:M_EP}
\begin{array}{lll}
\Delta\dot{\U}+\kappa^2\dot{\U} = f_\beta &\text{in}\quad D^c,\\
\noalign{\vspace{3pt}}
  \gamma_{\beta} \dot{\U} = m_\beta&\text{on}\quad\Gamma&\text{if}\quad\beta \in \{0,1\}\quad\text{or} \\
  \noalign{\vspace{3pt}}
  \gamma_{1} \dot{\U}  + \imath \eta \gamma_0 \dot{\U}=  m_2,~\eta >0 &\text{on}\quad\Gamma&\text{if}\quad \beta = 2,\\
  \noalign{\vspace{3pt}}
  \SRC(\dot{\U},\kappa),
\end{array}
\right.
$$
wherein, for $\U$ being the respective solution of \textup{(P}$_\beta$\textup{)}, we have
\begin{align*}
f_\beta &=
 \divg (A'(0) \nabla \U) + \kappa^2 \divg(\bv) \U,\\
m_0 &: =  0,\\
m_1 & : = - \nabla \bv^T \cdot \U  \cdot \bn  + \nabla \U \cdot \nabla_\Gamma (\bv \cdot \bn),\\
m_2 &: = - \nabla \bv^T \cdot \U  \cdot \bn  + \nabla \U \cdot \nabla_\Gamma (\bv \cdot \bn) + \divg_\Gamma \bv \gamma_1 \U.
\end{align*}
\end{subproblem}
\begin{subproblem}[MP$_3$]\label{pb:MD_P3}We seek $\dot{\U}=(\dot{\U}^0,\dot{\U}^1)$ as the solution of
$$
\left\{
\label{eq:M_EP_3}
\begin{array}{lll}
\Delta \dot{\U}^i + \kappa^2_i \dot{\U}^i &= f^i_3&\text{in}\quad D^i\text{, for } i=0,1,\\
\noalign{\vspace{3pt}}
  [ \gamma_{0}  \dot{\U} ]_\Gamma&= m^0_3 &\text{on}\quad \Gamma,\\
  \noalign{\vspace{3pt}}
  [ \frac{1}\mu \gamma_{1} \dot{\U} ]_\Gamma &= m^1_3 &\text{on}\quad \Gamma,\\
  \noalign{\vspace{3pt}}
  \SRC(\dot{\U}^0,\kappa_0),
\end{array}
\right.
$$
with boundary data built using $\U$ as the solution of \textup{(P$_3$)}, as follows
\begin{align*}
f_3^i &:= 
\divg (A'(0) \nabla \U^i) + \kappa^2 \divg(\bv) \U^i,\quad i=0,1,\\
m_3^0 & :=  0, \\
m_3^1 & :=- \Big[ \frac{1}{\mu} \Big]_\Gamma \nabla \bv^T \cdot \U|_\Gamma  \cdot \bn  + [\frac{1}{\mu}\nabla \U]_\Gamma \cdot \nabla_\Gamma (\bv \cdot \bn).
\end{align*}
\end{subproblem}
\subsection{Shape derivatives for Helmholtz scattering problems}
\label{subs:helmholtz_scattering_shape}
Consider a Lipschitz domain and transformation. The problems (SP$_\boldbeta$) satisfied by the corresponding SD \cite{hiptmair2017shape} are
\begin{subproblem}[SP$_\beta$) ($\beta=0,1,2$]We seek $\U'$ as the solution of
$$
\left\{
\label{eq:S_EP}
\begin{array}{lll}
\Delta\U'+\kappa^2\U' = 0&\text{ in } D^c,\\
\noalign{\vspace{3pt}}
  \gamma_{\beta} \U' = g_\beta&\text{on}\quad \Gamma&\text{if}\quad\beta \in \{0,1\}\quad \text{or} \\
  \noalign{\vspace{3pt}}
  \gamma_{1} \U'  + \imath \eta \gamma_0 \U'=  g_2,~\eta >0 &\text{on}\quad \Gamma &\text{if}\quad \beta = 2,\\
  \noalign{\vspace{3pt}}
  \SRC(\U',\kappa),
\end{array}
\right.
$$
wherein, for $\U$ being the respective solution of \textup{(P$_\beta$)}, we have
\begin{align*}
g_0 &: =  - \gamma_1\U (\bv \cdot \bn),\\
g_1 &: = \divg_{\Gamma} \big((\bv \cdot \bn) \nabla_{\Gamma} \U\big) + \kappa^2 \gamma_0\U ( \bv \cdot \bn ),\\
g_2 & : =  \divg_{\Gamma} \big((\bv \cdot \bn) \nabla_{\Gamma} \U\big) + \kappa^2 \gamma_0\U ( \bv \cdot \bn ) + \imath \eta  (\bv \cdot \bn) (-\gamma_1 \U- \mathfrak{H} \gamma_0 \U).
\end{align*}
\end{subproblem}
\begin{subproblem}[SP$_3$]We seek $\U'=(\U'^0,\U'^1)$ as the solution of
$$
\left\{
\label{eq:S_EP_3}
\begin{array}{lll}
\Delta \U'^i + \kappa^2_i \U'^i &= 0&\text{in}\quad D^i\quad \text{for}\quad i=0,1,\\
\noalign{\vspace{3pt}}
  [ \gamma_{0}  \U' ]_\Gamma&= g_3^0 &\text{on}\quad\Gamma,\\
  \noalign{\vspace{3pt}}
  [ \frac{1}\mu \gamma_{1}  \U' ]_\Gamma &= g_3^1 &\text{ on}\quad \Gamma,\\
  \noalign{\vspace{3pt}}
  \SRC(\U'^0,\kappa_0),
\end{array}
\right.
$$
with boundary data built using $\U$ as the solution of \textup{(P$_3$)}, as follows
\begin{align*}
g^0_3 & :=  -  [ \gamma_1\U ]_\Gamma (\bv \cdot \bn), \\
g_3^1 & :=  \Big[ \frac{1}{\mu} \Big]_\Gamma\divg_{\Gamma} \big((\bv \cdot \bn) \nabla_{\Gamma} \U\big) +  [ \kappa^2  ]_\Gamma\gamma_0\U ( \bv \cdot \bn ).
\end{align*}
\end{subproblem}
\begin{remark}
The BVPs for the SD have a relatively simple form: by virtue of the Hadamard structure theorem, they only involve the normal component of the velocity field on the boundary, while the MD presents a source term and a more complex form. The characterizations are valid in Lipschitz domains \cite{djellouli1999characterization}.
\end{remark}

\subsection{Shape calculus for the Cauchy data}
\label{sec:shdev_cauchy}
Hereafter, we provide a description of the CDDs arising in (P$_\boldbeta$).
\begin{theorem}[Shape Cauchy data]
\label{thm:shape_cauchy}
Let $\U$, $\dot{\U}$ and $\U'$ denote the solutions of \textup{(P$_\boldbeta$)}, \textup{(MP$_\boldbeta$)} and \textup{(SP$_\boldbeta$)}, respectively. The CDDs are given by
\be\label{eq:CMD}
\dot{\bxi} := \begin{pmatrix}
\gamma_0\dot{\U}\\
\gamma_1\dot{\U} - \nabla \bv^T  \U \cdot \bn - \nabla_\Gamma \U \cdot \nabla_\Gamma (\bv \cdot \bn)
\end{pmatrix},
\ee
and 
\be\label{eq:CSD}
\bxi' := \begin{pmatrix}
\gamma_0\U'  + \gamma_1 \U (\bv \cdot \bn)\\
\gamma_1 \U' +  \gamma_2 \U (\bv \cdot \bn) -\nabla_\Gamma \U \cdot \nabla_\Gamma (\bv \cdot \bn)
\end{pmatrix}.
\ee
Besides, the following holds
\be
\label{eq:CLD}
\dot{\bxi} = \bxi' + \nabla_\Gamma \bxi \cdot\bv.
\ee
\end{theorem}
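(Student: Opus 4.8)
The plan is to reduce all three identities to the behaviour of the two scalar boundary fields carried by $\U_t$ on the moving interface: writing $\lambda_t:=\gamma_{0,t}\U_t$ and $\sigma_t:=\gamma_{1,t}\U_t$ on $\Gamma_t$, where $\gamma_{0,t},\gamma_{1,t}$ are the traces attached to $\Gamma_t$, one has $\bxi_t=(\lambda_t,\sigma_t)$ and must compute the material and shape derivatives of these two functions. The reason \cref{eq:CSD} is not simply $\gamma\U'$ is that the traces are taken on the moving surface $\Gamma_t$ itself: even the plain restriction $\lambda_t$ has a shape derivative carrying a normal--transport term, while $\sigma_t$ also feels the rotation of the unit normal $\bn_t$, so the material derivative of the Neumann trace is not the Neumann trace of $\dot\U$. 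I would establish \cref{eq:CMD} first, then \cref{eq:CSD}, with \cref{eq:CLD} being a direct consequence of the boundary Lie relation.

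\emph{Dirichlet component.} Since $\lambda_t=\U_t|_{\Gamma_t}$ and $T_t(\Gamma)=\Gamma_t$, the Dirichlet trace commutes with the pull-back: $\lambda_t\circ T_t=(\U_t\circ T_t)|_\Gamma=\gamma_0(\U_t\circ T_t)$. Inserting the Taylor expansion $\U_t\circ T_t=\U+t\dot\U+o(t)$ in $\fH^q(\fD)$ from \cref{def:domain_derivative} and using continuity of $\gamma_0$ gives $\dot\lambda=\gamma_0\dot\U$, the first entry of \cref{eq:CMD}. The first entry of \cref{eq:CSD} then follows from the boundary Lie relation \cref{eq:boundary_lie}, $\lambda'=\dot\lambda-\nabla_\Gamma\lambda\cdot\bv$, together with the volume relation \cref{eq:lie}: the tangential/normal splitting of $\nabla\U|_\Gamma$ and $\bv$ gives $\gamma_0(\nabla\U\cdot\bv)=\nabla_\Gamma\U\cdot\bv+\gamma_1\U\,(\bv\cdot\bn)$, and the tangential part cancels $\nabla_\Gamma\lambda\cdot\bv$, leaving $\lambda'=\gamma_0\U'+\gamma_1\U\,(\bv\cdot\bn)$.

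\emph{Neumann component.} Here I would compute $\sigma_t\circ T_t$ explicitly. The chain rule gives $(\nabla\U_t)\circ T_t=(DT_t)^{-T}\nabla(\U_t\circ T_t)$ with $DT_t=\bI+t\,\nabla\bv$, and the standard transformation law for the outward unit normal (see~\cite[Section~2.8]{Sokolowski}) gives $\bn_t\circ T_t=(DT_t)^{-T}\bn/\|(DT_t)^{-T}\bn\|$, whence
\[
\sigma_t\circ T_t \;=\; \frac{\big((DT_t)^{-1}(DT_t)^{-T}\,\nabla(\U_t\circ T_t)\big)\cdot\bn}{\|(DT_t)^{-T}\bn\|}.
\]
Expanding to first order---with $(DT_t)^{-1}(DT_t)^{-T}=\bI-t(\nabla\bv+\nabla\bv^T)+o(t)$, $\|(DT_t)^{-T}\bn\|=1-t\,\bn\cdot(\nabla\bv\,\bn)+o(t)$ and $\U_t\circ T_t=\U+t\dot\U+o(t)$---gives a raw coefficient $\gamma_1\dot\U-\big((\nabla\bv+\nabla\bv^T)\nabla\U\big)\cdot\bn+\big(\bn\cdot(\nabla\bv\,\bn)\big)\gamma_1\U$, which I would then reshape into the second entry of \cref{eq:CMD} using the surface-calculus identities---tangential/normal splitting, the link between $\nabla$ and $\nabla_\Gamma$, and $(\nabla\bn)\bn=0$ (\cite[Section~5]{costabel2012shapeO}). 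The second entry of \cref{eq:CSD} follows from $\sigma'=\dot\sigma-\nabla_\Gamma\sigma\cdot\bv$ and \cref{eq:lie} by computing $\gamma_1(\nabla\U\cdot\bv)=\big((\nabla^2\U)\bv\big)\cdot\bn+\big((\nabla\bv)^T\nabla\U\big)\cdot\bn$: the normal--normal Hessian term is precisely $\gamma_2\U\,(\bv\cdot\bn)$, and the tangential term cancels $\nabla_\Gamma\sigma\cdot\bv$ up to $-\nabla_\Gamma\U\cdot\nabla_\Gamma(\bv\cdot\bn)$.

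Finally, \cref{eq:CLD} is immediate: it is the boundary Lie relation \cref{eq:boundary_lie} written componentwise for $\lambda_t$ and $\sigma_t$. I expect the real difficulty to be concentrated in the Neumann component, for two reasons. First, the appearance of $\gamma_2\U$ (and of $\gamma_1\dot\U$, $\gamma_1\U'$) forces one to work, for the present characterization, in a sufficiently smooth setting so that the second-order trace exists---the precise regularity then being the shift-theorem bookkeeping postponed to \cref{sec:regularity}. Second, the surface-calculus reconciliation that collapses the raw $O(t)$ coefficient coming from $DT_t$ and $\bn_t$ into the compact form of \cref{eq:CMD} is the delicate computational core; it is best cross-checked against the boundary conditions of (MP$_\boldbeta$)/(SP$_\boldbeta$), e.g.,~the sound-hard case where $\sigma_t\equiv0$ on $\Gamma_t$ forces the Neumann entry of $\dot\bxi$ to vanish.
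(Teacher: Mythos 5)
Your proposal is correct in substance and arrives at the stated formulas, but it routes the computation differently from the paper. For the CMD the paper does not redo the pull-back expansion you carry out: it simply invokes the known product rule for the material derivative of the Neumann trace together with the formula $\dot{\bn}=\bn'=-\nabla_\Gamma(\bv\cdot\bn)$ from \cite{hiptmair2017shape}, writing $\dot{\sigma}=\nabla\dot{\U}\cdot\bn-\nabla\bv^T\nabla\U\cdot\bn+\nabla\U\cdot\dot{\bn}$; your explicit expansion of $(DT_t)^{-1}(DT_t)^{-T}$ and of $\bn_t\circ T_t$ is a legitimate first-principles substitute, at the price of the surface-calculus reconciliation you rightly flag as the delicate core. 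The larger divergence is in the CSD: the paper differentiates the boundary identities $\lambda_t=\U_t|_{\Gamma_t}$ and $\sigma_t=\nabla\U_t|_{\Gamma_t}\cdot\bn_t$ directly, using the shape-differentiation rule for quantities supported on the moving boundary, which produces mean-curvature terms $\mathfrak{H}\,\lambda\,(\bv\cdot\bn)$ (resp.\ $\mathfrak{H}\,\sigma\,(\bv\cdot\bn)$) on both sides that cancel; you instead deduce the CSD from the CMD via the Lie relations \cref{eq:lie} and \cref{eq:boundary_lie} together with the normal/tangential splitting $\gamma_0(\nabla\U\cdot\bv)=\nabla_\Gamma\U\cdot\bv+\gamma_1\U\,(\bv\cdot\bn)$ and $\gamma_1(\nabla\U\cdot\bv)=(\nabla^2\U\,\bv)\cdot\bn+(\nabla\bv^T\nabla\U)\cdot\bn$. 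Your route keeps curvature out of the Dirichlet component entirely and makes \cref{eq:CLD} tautological, which is tidy; but be aware that in the Neumann component the cancellation of the tangential Hessian contribution $(\nabla^2\U\,\bn)\cdot\bv_\Gamma$ against $\nabla_\Gamma\sigma\cdot\bv$ is only exact up to a Weingarten term $\bigl((\nabla_\Gamma\bn)\nabla_\Gamma\U\bigr)\cdot\bv_\Gamma$, since $\nabla_\Gamma(\gamma_1\U)$ differs from the tangential part of $\nabla^2\U\,\bn$ by precisely that curvature correction; that bookkeeping must be tracked explicitly (it is where the paper's $\mathfrak{H}$ terms hide), so the phrase ``cancels up to $-\nabla_\Gamma\U\cdot\nabla_\Gamma(\bv\cdot\bn)$'' needs to be expanded before the argument is complete. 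Your suggestion to cross-check against the boundary conditions of (MP$_{\boldbeta}$) and (SP$_{\boldbeta}$) is a genuinely useful addition that the paper does not perform.
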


\begin{proof}Formula for the CMD are deduced using \cite[Equation 25]{bochniak2002domain} and by taking $\dot{\bn} = \bn' = - \nabla_\Gamma (\bv\cdot \bn)$ \cite[Equation 4.13]{hiptmair2017shape}), giving $\dot{\lambda} = \gamma_0\dot{\U}$, and:
\be 
\begin{array}{ll}
\dot{\sigma}&=  \nabla \dot \U \cdot \bn  - \nabla \bv^T  \U \cdot \bn + \nabla\U \cdot \dot{\bn}\\
& = \gamma_1\dot{\U} - \nabla \bv^T \U \cdot \bn - \nabla \U \cdot \nabla_\Gamma (\bv \cdot \bn).
\end{array}
\ee
For the CSD, we follow the classical steps of differentiation of shape functionals defined on a boundary \cite[Section 5.6]{antoine_var}, which we reproduce for completeness. For the case of Dirichlet traces, we consider the identity
\be
\lambda_t = \U_t|_{\Gamma_t}.
\ee
Upon differentiating this formula, we obtain
\be
\lambda' + \mathfrak{H}  \lambda (\bv \cdot \bn) =  \gamma_0 \U ' + \frac{\partial \U}{\partial \bn} (\bv \cdot \bn) +\mathfrak{H} \gamma_0 \U (\bv \cdot \bn),
\ee
and consequently, the desired result for first element of (\ref{eq:CSD}) since $\lambda = \gamma_0 \U$, and for (\ref{eq:CLD}) by using (\ref{eq:boundary_lie}).
For Neumann traces, as $\sigma_t = \nabla \U_t |_{\Gamma_t} \cdot \bn_t $, differentiation leads to
\be
\sigma' + \mathfrak{H} \sigma (\bv \cdot \bn)= \big(\nabla \U_t|_{\Gamma_t} \cdot \bn_t\big)' + \frac{\partial^2 \U }{\partial \bn^2} (\bv \cdot \bn) + \mathfrak{H} \frac{\partial \U}{\partial \bn} (\bv \cdot \bn),
\ee
Hence, by the product rule and since $\sigma = \gamma_1\U$, we derive
\be
\sigma' = \gamma_1 \U' - \nabla_\Gamma \U \cdot \nabla_\Gamma (\bv \cdot \bn)+ \frac{\partial^2 \U }{\partial \bn^2} (\bv \cdot \bn),
\ee
yielding second element in (\ref{eq:CSD}). 
\end{proof}
The above formulas show that the CSD depend on the solution of the shape BVPs as well as on $(\bv \cdot \bn)$, $\nabla_\Gamma (\bv \cdot \bn),\nabla_\Gamma \U$ and $\gamma_\beta\U$, for $\beta \in \{0,1,2\}$. Therefore, they can be easily evaluated when using numerical methods based on BIEs, which provide Cauchy traces, and whose second-order traces can be accessed through vector calculus.

\section{Characterization Step (S1)}
\label{sec:regularity}
In this section, we focus on the characterization step (S1), and follow a three-step approach:
\begin{enumerate}
  \item[(i)] provide regularity results for $\U$ via elliptic regularity estimates for (P$_\boldbeta$);
  \item[(ii)] deduce regularity for the DDs' data with trace embeddings;
  \item[(iii)] determine regularity estimates for the MD (resp.~SD) through elliptic regularity estimates for (MP$_\boldbeta$) (resp.~(SP$_\boldbeta$)).
\end{enumerate}
We start by tackling smooth domains using general shift theorems in \cref{subsec:Classic_Shift}. Then, we consider limit shift theorems, and domains with corners. For the upcoming analysis, we recall a result that will be employed repeatedly in \cref{sec:regularity}.
\begin{lemma}[Multipliers]\label{lemma:product}Consider a $C^{r,1}$-domain for some $r\in\IN_0$. For $\sigma \in H^{q}(\Gamma)$ with $q\in\IR$, $|q| \leq r+1$, and $\phi \in W^{|q|,\infty}(\Gamma)$, it holds that
\be
\|\sigma \phi \|_{H^q(\Gamma)} \leq \|\sigma\|_{H^q(\Gamma)} \|\phi\|_{W^{|q|,\infty}(\Gamma)}.
\ee
\end{lemma}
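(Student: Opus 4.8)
The plan is to reduce the claimed bound to the standard interpolation/duality structure of Sobolev norms on the compact manifold $\Gamma$, treating the cases $q = 0$, integer $q > 0$, and then general real $q \in [0, r+1]$ by interpolation, with the negative range $q \in [-(r+1), 0)$ handled by duality. First I would observe that for $q = 0$ the estimate is immediate: $\|\sigma\phi\|_{L^2(\Gamma)}^2 = \int_\Gamma |\sigma|^2 |\phi|^2 \, \dd S \leq \|\phi\|_{L^\infty(\Gamma)}^2 \|\sigma\|_{L^2(\Gamma)}^2$, so the inequality holds with the sharp constant $1$. This is the base case for everything that follows.

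Next, for an integer exponent $m := q$ with $1 \leq m \leq r+1$, I would argue via the definition of $H^m(\Gamma)$ through local charts and tangential derivatives. Since $\Gamma \in C^{r,1}$, the coordinate changes are $C^{r,1}$ and the space $W^{m,\infty}(\Gamma)$ is a well-defined Banach algebra acting on $H^m(\Gamma)$ for $m \le r+1$; applying the Leibniz rule to $\nabla_\Gamma^\alpha(\sigma\phi)$ for multi-indices $|\alpha| \le m$ and using that lower-order tangential derivatives of $\phi$ are controlled in $L^\infty$ while the corresponding derivatives of $\sigma$ are controlled in $L^2$, one gets $\|\sigma\phi\|_{H^m(\Gamma)} \le C \|\sigma\|_{H^m(\Gamma)} \|\phi\|_{W^{m,\infty}(\Gamma)}$. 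Strictly speaking the Leibniz expansion produces a combinatorial constant, so to land exactly the constant-free inequality as stated one should either absorb it (the paper uses a generic $C$ elsewhere and the statement here is used only up to constants) or note that the stated form is what is needed downstream; I would flag this as the one cosmetic subtlety. Then for non-integer $q \in (0, r+1)$ with $q \notin \IN$, write $q = (1-\vartheta) m_0 + \vartheta m_1$ with integers $0 \le m_0 < m_1 \le r+1$ and $\vartheta \in (0,1)$, and interpolate: the multiplication operator $M_\phi : \sigma \mapsto \sigma\phi$ is bounded $H^{m_0}(\Gamma) \to H^{m_0}(\Gamma)$ with norm $\le \|\phi\|_{W^{m_0,\infty}}$ and $H^{m_1}(\Gamma) \to H^{m_1}(\Gamma)$ with norm $\le \|\phi\|_{W^{m_1,\infty}}$, so by the interpolation theorem for the real (or complex) method, $\|M_\phi\|_{H^q \to H^q} \le \|\phi\|_{W^{m_0,\infty}}^{1-\vartheta}\|\phi\|_{W^{m_1,\infty}}^{\vartheta} \le \|\phi\|_{W^{\lceil q \rceil,\infty}(\Gamma)} \le \|\phi\|_{W^{|q|,\infty}(\Gamma)}$ since $|q| = q \le \lceil q \rceil$ and the $W^{s,\infty}$ norms are monotone; here I would use that $H^s(\Gamma) = [H^{m_0}(\Gamma), H^{m_1}(\Gamma)]_{\vartheta}$ for this range of $s$, valid because $\Gamma$ is a compact $C^{r,1}$ manifold.

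For the negative range $q \in [-(r+1), 0)$, I would use duality: $H^q(\Gamma) = (H^{-q}(\Gamma))'$ with the $L^2(\Gamma)$ pairing, $-q \in (0, r+1]$. For $\sigma \in H^q(\Gamma)$ and $\phi \in W^{|q|,\infty}(\Gamma) = W^{-q,\infty}(\Gamma)$, estimate
\[
\|\sigma\phi\|_{H^q(\Gamma)} = \sup_{0 \neq \psi \in H^{-q}(\Gamma)} \frac{|\langle \sigma\phi, \psi\rangle|}{\|\psi\|_{H^{-q}(\Gamma)}} = \sup_\psi \frac{|\langle \sigma, \phi\psi\rangle|}{\|\psi\|_{H^{-q}(\Gamma)}} \le \sup_\psi \frac{\|\sigma\|_{H^q(\Gamma)}\|\phi\psi\|_{H^{-q}(\Gamma)}}{\|\psi\|_{H^{-q}(\Gamma)}},
\]
and then apply the already-established positive-exponent multiplier bound to $\|\phi\psi\|_{H^{-q}(\Gamma)} \le \|\phi\|_{W^{-q,\infty}(\Gamma)}\|\psi\|_{H^{-q}(\Gamma)} = \|\phi\|_{W^{|q|,\infty}(\Gamma)}\|\psi\|_{H^{-q}(\Gamma)}$, which gives the claim. (One uses here that $\phi$ real- or complex-valued $W^{-q,\infty}$ and real pairing; the complex-conjugate bookkeeping in the duality pairing is harmless.) The constraint $|q| \le r+1$ is exactly what makes $W^{|q|,\infty}(\Gamma)$ a legitimate multiplier space on a $C^{r,1}$ boundary, which is why it appears in the hypothesis.

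The main obstacle is not any single hard estimate but assembling the chart-based definitions, the algebra property of $W^{m,\infty}$ acting on $H^m$, the interpolation identity for Sobolev spaces on a $C^{r,1}$ manifold, and the duality characterization into a clean statement with the stated constant-free bound; in particular, verifying that the interpolation spaces $[H^{m_0}(\Gamma),H^{m_1}(\Gamma)]_\vartheta$ genuinely coincide with $H^q(\Gamma)$ for the whole range $|q|\le r+1$ on a merely $C^{r,1}$ (not $C^\infty$) boundary requires invoking the appropriate extension/restriction results, and reconciling the Leibniz combinatorial factor with the clean inequality as written is the one place where I would either appeal to the paper's generic-constant convention or restrict attention to what the downstream applications in \cref{sec:regularity} actually require.
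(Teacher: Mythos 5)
Your route is genuinely different from the paper's: the paper does not prove this lemma from scratch at all, but simply cites an external reference where the multiplier estimate is established for H\"older-class multipliers $C^{|q|-1,1}(\Gamma)$, and then passes to $W^{|q|,\infty}(\Gamma)$ by the standard identification of these spaces. Your attempt at a self-contained proof (base case, Leibniz for integer exponents, interpolation for fractional exponents, duality for negative exponents) is a reasonable skeleton, and the $q=0$, integer, and duality steps are fine up to the generic-constant caveat you correctly flag.

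However, the fractional-exponent step contains a genuine gap, and it is precisely the case the paper needs most (the downstream applications in \cref{sec:regularity} use half-integer $q$ throughout). Interpolating the operator $M_\phi:\sigma\mapsto\sigma\phi$ between $H^{m_0}\to H^{m_0}$ and $H^{m_1}\to H^{m_1}$ with $\phi$ held fixed yields the bound $\|M_\phi\|_{H^q\to H^q}\le\|\phi\|_{W^{m_0,\infty}}^{1-\vartheta}\|\phi\|_{W^{m_1,\infty}}^{\vartheta}$, which requires $\phi\in W^{m_1,\infty}=W^{\lceil q\rceil,\infty}(\Gamma)$ --- strictly more regularity than the hypothesis $\phi\in W^{|q|,\infty}(\Gamma)$ provides when $q\notin\IN$. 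Your attempt to repair this via $\|\phi\|_{W^{\lceil q\rceil,\infty}}\le\|\phi\|_{W^{|q|,\infty}}$ uses monotonicity in the wrong direction: on a compact manifold the embedding is $W^{s_2,\infty}\hookrightarrow W^{s_1,\infty}$ for $s_1\le s_2$, so the higher-order norm dominates the lower-order one, not conversely. To close the gap you would need either a direct fractional argument (split $\sigma(x)\phi(x)-\sigma(y)\phi(y)=(\sigma(x)-\sigma(y))\phi(x)+\sigma(y)(\phi(x)-\phi(y))$ in the Sobolev--Slobodeckij seminorm and use the H\"older continuity of $\phi$), or a bilinear interpolation in both arguments together with the identification of the interpolation space of the multiplier scale --- neither of which is automatic from the linear interpolation you invoke. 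The negative-$q$ duality step then inherits this issue whenever $-q$ is fractional.
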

\begin{proof}
This result is proved in \cite[section 6]{sparse3} with respect to $C^{|q|-1,1}(\Gamma)$ functions instead of $W^{|q|,\infty}(\Gamma)$, leading to the same result directly as $\|\phi\|_{C^{|q|+1,1}(\Gamma)} \leq C \|\phi\|_{W^{|q|,\infty}(\Gamma)}$ by Morrey's inequality (see \cite[section 5.6.2]{evans2010partial}). 
\end{proof}
\subsection{Classic shift regularity theorem for regular domains}\label{subsec:Classic_Shift}
The choice of a relatively smooth domain of index $\fr$ and data $\fk$ induces smoothness for the BVPs as stated in \cref{lemma:shift} (see e.g.,~\cite{MacLean}). 
\begin{lemma}[Classic shift theorem]
\label{lemma:shift}
Consider a domain of class $C^{\fr,1}$, $\fr\in \IN_0$, assume that $\xi^\textup{inc}\in \mH^\fq(\Gamma)$, $\fq \leq \fr$ and define $\U$ to be the unique solution of \textup{(P$_\boldbeta$)}. Then,
\be
\U \in \fH^{\fq+1}(\fD) \textup{, and  } \gamma  \U \in \IH^{\fq}(\Gamma)
\ee
and
\be
\|\U \|_{\fH^{\fq+1}(\fD)}\leq C \|\xi^\textup{inc} \|_{\mH^{\fq}(\Gamma)}.
\ee
\end{lemma}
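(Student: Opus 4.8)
The plan is to derive the claim from the classical interior and boundary elliptic regularity (``shift'') theorems for the strongly elliptic operator $L_\kappa=-\Delta-\kappa^2$ on $C^{\fr,1}$ domains, after a standard truncation of the unbounded region. First I would pass to the scattered field: writing $\U=\U^\scat+\U^\inc$ in $D^c$ (and keeping $\U^1$ itself as the transmitted field in $D$ when $\beta=3$), the homogeneous boundary, resp.~transmission, conditions imposed on $\U$ in (P$_\boldbeta$) become \emph{inhomogeneous} conditions on the radiating field $\U^\scat$ whose datum is exactly $-\xi^\inc$ --- the Dirichlet trace $-\gamma_0\U^\inc$ for $\beta=0$, the Neumann trace $-\gamma_1\U^\inc$ for $\beta=1$, the impedance combination $-(\gamma_1+\imath\eta\gamma_0)\U^\inc$ for $\beta=2$, and the jump pair $(-\gamma_0\U^\inc,-\mu_0^{-1}\gamma_1\U^\inc)$ for $\beta=3$ --- and by hypothesis this datum lies in $\mH^\fq(\Gamma)$ with $\fq\le\fr$. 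Note that the PDE for $\U^\scat$ is homogeneous, so no interior source ever appears.

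Next I would truncate the exterior region: fix $R$ with $\overline D\Subset B_R$ and replace $\SRC(\U^\scat,\kappa)$ by the exact boundary condition $\gamma_1\U^\scat=\mathcal{T}_\kappa\,\gamma_0\U^\scat$ on the $C^\infty$ sphere $S_R:=\partial B_R$, where $\mathcal{T}_\kappa$ is the exterior Dirichlet-to-Neumann map. Since $S_R$ is smooth and $\mathcal{T}_\kappa$ is a classical pseudodifferential operator of order one --- bounded $H^{s}(S_R)\to H^{s-1}(S_R)$ for every $s\in\IR$ --- whose sign structure turns the truncated problem on $\Omega_R:=B_R\setminus\overline D$ (resp.~$\Omega_R\cup D$ for $\beta=3$) into a well-posed Fredholm problem, equivalent to (P$_\boldbeta$) on $\Omega_R$ by uniqueness, the boundary condition at $S_R$ costs no regularity.

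I would then invoke the shift theorem for the relevant bounded-domain problem for $L_\kappa$ on the $C^{\fr,1}$ set $\Omega_R$ --- Dirichlet for $\beta=0$, Neumann for $\beta=1$, Robin for $\beta=2$, and the two-sided transmission problem for $\beta=3$, the last reducible to the former three through the interface relations or a boundary-integral argument on $\Gamma$ (see \cite[Chapter~4]{MacLean}). As $\fq\le\fr$ lies in the admissible range, with vanishing source and datum in $\mH^\fq(\Gamma)$ these give $\U^\scat\in H^{\fq+1}(\Omega_R)$ (resp.~$H^{\fq+1}(\Omega_R)\times H^{\fq+1}(D)$) together with $\|\U^\scat\|_{H^{\fq+1}}\le C\big(\|\xi^\inc\|_{\mH^\fq(\Gamma)}+\|\U^\scat\|_{H^1}\big)$, the lower-order term being absorbed via the $H^1$ well-posedness of (P$_\boldbeta$), itself bounded by $\|\xi^\inc\|_{\mH^\fq(\Gamma)}$ since $\mH^\fq(\Gamma)\hookrightarrow\mH(\Gamma)$. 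Away from $\Gamma$ the field $\U^\scat$ is real-analytic by interior regularity, which simultaneously furnishes the decay of the radiating part and hence control of the weighted norm near infinity; thus $\U^\scat\in\fH^{\fq+1}(\fD)$. (For the weighted version $\fH^{\fq+1}=H^{\fq+1}_\kappa$ one argues identically, using in addition that $\U^\inc$ sits in the corresponding weighted space under the standing assumptions.)

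Finally I would add $\U^\inc$ back: near $\Gamma$ it solves $L_\kappa\U^\inc=0$ with Cauchy data in $\IH^\fq(\Gamma)$, hence lies in $H^{\fq+1}$ there by the same boundary shift theorem, while away from $\Gamma$ it is smooth; therefore $\U=\U^\scat+\U^\inc\in\fH^{\fq+1}(\fD)$, and the continuous surjective trace operators yield $\gamma\U\in\IH^\fq(\Gamma)$. Chaining the constants of the bounded maps used above (the well-posedness operator, $\mathcal{T}_\kappa$, the shift operators, the traces) gives the stated estimate. The one genuinely delicate point is the third step: one must check that the truncation by $\mathcal{T}_\kappa$ --- or, for $\beta=3$, the interface coupling --- keeps us inside the hypotheses of the bounded-domain shift theorems without shrinking the range below $\fq\le\fr$ and preserves the Shapiro--Lopatinskii/coercivity structure they rely on. One may instead avoid the artificial boundary altogether, either by invoking exterior elliptic regularity directly in the weighted spaces $H^q_\kappa(D^c)$, or by representing $\U^\scat$ through combined layer potentials and using the mapping properties of the boundary integral operators on $C^{\fr,1}$ surfaces; every route reaches the same conclusion, the remainder being bookkeeping of Sobolev indices and constants.
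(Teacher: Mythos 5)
Your proof is correct and follows essentially the same route as the paper: pass to the scattered field so the boundary datum becomes $-\xi^{\inc}$, apply the classical elliptic shift theorems for $C^{\fr,1}$ domains (Grisvard-type, with the exterior/weighted adaptation that you make explicit via the DtN truncation and the paper handles by citing N\'ed\'elec and McLean), then recover the total field and its Cauchy traces by trace continuity. The paper's proof is just a compressed version of this argument, so no substantive difference to report.
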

\begin{proof}
By the absence of source terms---and the smoothness of the right-hand sides, we apply classical regularity results for strongly elliptic operators defined in domains of class $C^{\fr,1}$ to the scattered field. Thus, we conclude that $\U^\textup{sc} \in H_\loc^{\fq+1}(\fD)$ along with continuous dependence on the data. These results are detailed in \cite[Theorem 2.5.1.1]{Grisvard} for (P$_\beta$), $\beta = 0,1,2$, for bounded domains and are easily adapted to weighted spaces $\fH_\kappa^{\fq+1}(\fD)$ (see also \cite[Theorem 2.5.21]{Nedelec} for mapping properties in exterior domains or \cite[Theorem 4.20]{MacLean} for the transmission problem). Finally, the relation between the scattered and total field and continuity of traces \cite[Theorem 1.5.1.5]{Grisvard} provide the desired result.
\end{proof}

Now, we can characterize accurately the behavior of solutions for all the BVPs considered so far with respect to regularity indices $(\fr,\fq,\fk)$. We introduce the $\Rrightarrow$ symbol to alleviate notations and simplify regularity considerations for Sobolev spaces, making functional space and domain definition implicit. For instance, $\U \Rrightarrow \fr + 1$ stands for $\U \in \fH^{\fr+1}(\fD)$, or $\bv \Rrightarrow \fk+ 1 $ for $\bv \in W^{\fk+1,\infty}(\Gamma)$. In this context, $a\cap b$ refers to $\min(a,b)$.
\begin{lemma}\label{lemma:regularity_others}Consider the setting of \cref{lemma:shift} along with a transformation $\bv \in W^{\fk+1,\infty}(\Gamma)$, $0 \leq \fk \leq \fr$, and set $\fs_t := \min(\fq, \fk)$. Then, it holds that
\begin{align*}
\U_t \circ T_t &\in \fH^{\fs_t+ 1} (\fD_t), \quad 
\nabla \U \cdot \bv \in \fH^{\min(\fq - 1 ,\fk) + 1} (\fD)\quad \text{and}\\
\nabla_\Gamma \U \cdot \bv_\Gamma & \in H^{\half + \min\left(\fq-1,\fk+\half\right)}(\Gamma).
\end{align*}
\end{lemma}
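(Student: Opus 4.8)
The plan is to derive each of the three assertions separately from \cref{lemma:shift}, the trace theorems, and the multiplier estimate \cref{lemma:product} (together with its obvious volume analogue on $\fD$). Before anything else I would recall that $\bv$ has been extended so as to vanish outside a tubular neighbourhood $\Gamma_\mE$ of $\Gamma$; hence every product involving $\bv$ below is compactly supported near $\Gamma$, so for those quantities the $\loc$- and $\kappa$-weighted versions of $\fH^q(\fD)$ coincide and need not be distinguished.

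For the first claim I would \emph{not} apply \cref{lemma:shift} directly on the only $C^{\fk,1}$ perturbed domain $D_t$, but instead pull \textup{(P$_\boldbeta$)} back to the fixed domain $\fD$ through $T_t$. This produces, for $\U_t\circ T_t$, a strongly elliptic boundary value problem of the same type posed on the $C^{\fr,1}$-domain $\fD$, with coefficients of class $W^{\fk,\infty}$ (recall $\nabla\bv$ and $A'(0)$ belong to $W^{\fk,\infty}$) and with data inheriting the Cauchy regularity $\fq$ of $\xi^{\textup{inc}}$. Since $\fr\ge\fk$, the nominal domain does not limit the shift, so the variable-coefficient version of \cref{lemma:shift} saturates at $\min(\fq,\fk)+1=\fs_t+1$, which gives $\U_t\circ T_t\in\fH^{\fs_t+1}(\fD)$ (equivalently, reading the statement on $\fD_t$, $\U_t\in\fH^{\fs_t+1}(\fD_t)$ since $D_t\in C^{\fk,1}$).

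For the second claim one uses $\U\in\fH^{\fq+1}(\fD)$ from \cref{lemma:shift}, hence $\nabla\U\in[\fH^{\fq}(\fD)]^d$, and multiplies by $\bv\in W^{\fk+1,\infty}(\fD)$: the volume multiplier estimate then yields $\nabla\U\cdot\bv\in\fH^{\min(\fq,\,\fk+1)}(\fD)=\fH^{\min(\fq-1,\fk)+1}(\fD)$, any fractional order being absorbed into the integer multiplier order $\lceil\min(\fq,\fk+1)\rceil\le\fk+1$. For the third claim, the key observation is that the surface gradient $\nabla_\Gamma\U$ is the tangential component of the full trace $\nabla\U|_\Gamma$; since $\nabla\U\in[\fH^{\fq}(\fD)]^d$, the trace theorem gives $\nabla\U|_\Gamma\in[H^{\fq-\frac12}(\Gamma)]^d$ and hence $\nabla_\Gamma\U\in[H^{\fq-\frac12}(\Gamma)]^d$ (the tangential projection uses $\bn\in W^{\fr,\infty}(\Gamma)$ as an admissible multiplier, legitimate since $\fq\le\fr$). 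Combining this with $\bv_\Gamma=\bv-(\bv\cdot\bn)\bn\in[W^{\min(\fr,\fk+1),\infty}(\Gamma)]^d$ and \cref{lemma:product} gives $\nabla_\Gamma\U\cdot\bv_\Gamma\in H^{\min(\fq-\frac12,\,\fr,\,\fk+1)}(\Gamma)$; since $\fq\le\fr$ forces $\fq-\tfrac12<\fr$, this equals $H^{\min(\fq-\frac12,\,\fk+1)}(\Gamma)=H^{\frac12+\min(\fq-1,\,\fk+\frac12)}(\Gamma)$, exactly the asserted space.

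The main obstacle is the first assertion: one has to see precisely why the regularity saturates at $\min(\fq,\fk)+1$ and no higher, which is exactly what the pulled-back $W^{\fk,\infty}$-coefficient formulation on the fixed $C^{\fr,1}$ domain makes transparent (the coefficient regularity, not the nominal domain, is the bottleneck). A secondary subtlety, in the third assertion, is the extra half-derivative: it is available only because $\nabla_\Gamma\U$ is read off the full gradient's trace rather than obtained by differentiating $\gamma_0\U$. Everything else—the bookkeeping of fractional Sobolev orders against integer multiplier orders, and the compact support of the $\bv$-products—is routine and relies only on $\fk\le\fr$ and $\fq\le\fr$.
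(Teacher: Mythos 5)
Your proposal is correct, and for the second and third assertions it coincides with the paper's argument: $\nabla\U\Rrightarrow\fq$ multiplied by $\bv\Rrightarrow\fk+1$, and $\nabla_\Gamma\U\Rrightarrow\fq-\half$ multiplied by $\bv_\Gamma\Rrightarrow\fk+1$ via \cref{lemma:product}. For the first assertion, however, you take a genuinely different route. The paper applies \cref{lemma:shift} \emph{directly on the perturbed domain} $\fD_t$, which is only of class $C^{\fk,1}$, obtaining $\U_t\in\fH^{\min(\fq,\fk)+1}(\fD_t)$, and then transports this to the nominal domain by composing with the $W^{\fk+1,\infty}$ diffeomorphism $T_t$ (which preserves $H^s$ for $s\leq\fk+1$). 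You instead pull the BVP back to the fixed $C^{\fr,1}$ domain and invoke a shift theorem for the resulting variable-coefficient operator with $W^{\fk,\infty}$ coefficients. Both yield the same index $\fs_t+1$ and both identify the same bottleneck ($\fk$, not $\fr$); the paper's version only needs the constant-coefficient \cref{lemma:shift} already stated, at the price of working on a less regular domain and then composing, whereas your version stays on the fixed domain from the start---which is closer to what one actually needs later for the material derivative and for uniformity in $t$---but requires a variable-coefficient regularity result that the paper never states. One small inaccuracy in your commentary: the extra half-derivative in the third assertion does not depend on reading $\nabla_\Gamma\U$ off the trace of the full gradient; since \cref{lemma:shift} gives $\gamma_0\U\in H^{\fq+\half}(\Gamma)$, applying $\nabla_\Gamma$ (continuous from $H^{s+1}(\Gamma)$ to $[H^s(\Gamma)]^d$) to $\gamma_0\U$ gives the same $H^{\fq-\half}(\Gamma)$ regularity. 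This does not affect the validity of your argument.
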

\begin{proof}
Arguments of \cref{lemma:shift} applied to $\U_t$ in $\fD_t \in C^{\fk,1}$ lead to $\U_t  \Rrightarrow \min(\fq , \fk) + 1= \fs_t + 1$, thence $\U_t\circ T_t \Rrightarrow \fs_t +1$. Next, \cref{lemma:shift} states that $\U\Rrightarrow 1 + \fq$, giving $\nabla \U \Rrightarrow \fq$ and $\nabla \U \cdot \bv \Rrightarrow \min(\fq, \fk+1)$. Besides, $\nabla_\Gamma \U \Rrightarrow \fq -\half$ giving $\nabla_\Gamma \U\cdot \bv \Rrightarrow \min\left( \fq-\half, \fk + 1\right) = \half + \min \left(\fq -1, \fk + \half\right)$. 
\end{proof}
We are ready to state the regularity results for the DDs, starting with the MD.
\begin{theorem}
\label{thm:regularityMD}
Consider the setting of \cref{lemma:shift} along with $\bv \in W^{\fk+1,\infty}(\Gamma)$ and
\begin{align*}
\fr \in \IN_1,\quad 1\leq \fq \leq \fr ,\quad 1 \leq \fk\leq \fr.
\end{align*}
Set 
\be 
\dot{\fs} : =  \left\{ \begin{array}{lll}\overline{\fs}:=\min(\fq, \fk) \quad \quad\text{for}\quad \beta = 0,\\\noalign{\vspace{2pt}}
\underline{\fs}:=\min\left(\fq,\fr-\half,\fk\right) \quad \text{else}.\end{array}\right.
\ee
Then
\be 
\dot{\U} \in \fH^{\dot{\fs} + 1} (\fD)\textup{, and } \dot{(\gamma \U)} \in \IH^{\dot{\fs}}(\Gamma).
\ee
\end{theorem}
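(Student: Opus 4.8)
The plan is to follow the three-step programme announced at the start of \cref{sec:regularity}: first use the elliptic shift theorem on $\U$, then propagate that regularity to the right-hand side data of (MP$_\boldbeta$) via trace embeddings and \cref{lemma:product}, and finally apply the shift theorem once more to (MP$_\boldbeta$). Concretely, \cref{lemma:shift} gives $\U \Rrightarrow \fq + 1$, hence $\nabla\U \Rrightarrow \fq$, $\gamma_0\U \Rrightarrow \fq - \tfrac12$, $\gamma_1\U \Rrightarrow \fq - \tfrac32$, and $\nabla_\Gamma\U \Rrightarrow \fq - \tfrac12$. The velocity field contributes $\bv \Rrightarrow \fk+1$, $(\bv\cdot\bn) \Rrightarrow \min(\fr,\fk+1)$, $\nabla\bv \Rrightarrow \fk$, and $\nabla_\Gamma(\bv\cdot\bn) \Rrightarrow \min(\fr,\fk+1)-1 = \min(\fr-1,\fk)$. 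I would then read off the regularity of each term in $f_\beta$ and $m_\beta$ (resp.~$f_3^i$, $m_3^i$): for the volume source $f_\beta = \divg(A'(0)\nabla\U) + \kappa^2\divg(\bv)\U$, since $A'(0)\Rrightarrow \fk$ and $\nabla\U\Rrightarrow\fq$, a product estimate (\cref{lemma:product}, or its obvious $W^{k,\infty}$ analogue in the bulk) gives $A'(0)\nabla\U \Rrightarrow \min(\fq,\fk)$ and thus $f_\beta \Rrightarrow \min(\fq,\fk)-1 = \dot{\fs}-1$; likewise $\divg(\bv)\U \Rrightarrow \min(\fq+1,\fk)$, which is no worse. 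So for the Dirichlet case $\beta=0$ with $m_0 = 0$, the shift theorem applied to (MP$_0$) on the $C^{\fr,1}$ domain immediately yields $\dot{\U}\Rrightarrow (\dot{\fs}-1)+2 = \dot{\fs}+1 = \overline{\fs}+1$, matching the claim.

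For $\beta = 1,2$ the Neumann/impedance data $m_\beta$ is the genuine bottleneck, and this is where the $\fr-\tfrac12$ term in $\underline{\fs}$ enters. The term $\nabla\bv^T\U\cdot\bn$ involves $\gamma_0\U\Rrightarrow\fq-\tfrac12$ multiplied by $\nabla\bv\Rrightarrow\fk$ and $\bn\Rrightarrow\fr$, giving (via \cref{lemma:product}) regularity $\min(\fq-\tfrac12,\fk)$ — note $\fk$ not $\fk+\tfrac12$ — while $\nabla_\Gamma\U\cdot\nabla_\Gamma(\bv\cdot\bn)$ pairs $\fq-\tfrac12$ with $\min(\fr-1,\fk)$, giving $\min(\fq-\tfrac12, \fr-1, \fk)$, and for $\beta=2$ the extra $\divg_\Gamma\bv\,\gamma_1\U$ gives $\min(\fq-\tfrac32,\fk)$. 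Taking the minimum, the Neumann datum lives in $H^{\underline\fs}(\Gamma)$ with $\underline\fs = \min(\fq-\tfrac12,\ \fr-\tfrac12,\ \fk)$ after one checks the half-integer arithmetic carefully (the $\fr-\tfrac12$ ceiling is forced by the Neumann trace theorem: the shift theorem for (MP$_\beta$) only accepts data up to $H^{\fr-\tfrac12}(\Gamma)$ on a $C^{\fr,1}$ domain regardless of how smooth the datum itself is). Applying \cref{lemma:shift} to (MP$_\beta$) with this data then gives $\dot{\U}\Rrightarrow \underline\fs+1$ and $\gamma\U\Rrightarrow\underline\fs$, as stated. The transmission case $\beta=3$ is structurally identical: $m_3^0=0$ and $m_3^1$ has the same two-term shape as $m_1$ (with jump coefficients $[\mu^{-1}]_\Gamma$, $[\mu^{-1}\nabla\U]_\Gamma$, which are as smooth as the individual traces), so the same count produces $\underline\fs$.

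I would assemble these observations into a short proof: invoke \cref{lemma:shift} and \cref{lemma:regularity_others} for $\U$ and the geometric quantities; then in a displayed align (no blank lines!) tabulate the Sobolev index of each summand of $f_\beta$, $m_\beta$, $f_3^i$, $m_3^i$ using \cref{lemma:product}; take minima to identify the data regularity as $\dot\fs - 1$ in the bulk and $\dot\fs$ on the boundary; and close by applying \cref{lemma:shift} once more to (MP$_\boldbeta$) (legitimate since the compatibility $\dot\fs \le \fr$ holds by construction and (MP$_\boldbeta$) is well posed by Kleemann's characterization). The main obstacle is bookkeeping rather than conceptual: one must be scrupulous about half-integer Sobolev indices and about \emph{which} factor in each product carries the limiting $W^{m,\infty}$ multiplier regularity, because it is exactly the asymmetry between the "$\fk$" appearing in $\nabla\bv^T\U\cdot\bn$ and the "$\fk+\tfrac12$" one might naively expect that pins down $\dot\fs$; and one must verify that the shift theorem's own ceiling $\fr-\tfrac12$ on admissible Neumann data is the binding constraint that creates the $\fr-\tfrac12$ entry. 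A minor subtlety worth a remark is that the volume source $f_\beta$ only needs bulk multiplier estimates (Leibniz in $H^s$ against $W^{k,\infty}$), which are standard and can be cited alongside \cref{lemma:product}.
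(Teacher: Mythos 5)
Your overall strategy is exactly the paper's three-step programme (shift theorem for $\U$, multiplier/trace bookkeeping for the data of (MP$_\boldbeta$), shift theorem again), and the volume term $f_\beta$ and the case $\beta=0$ are handled correctly. But the boundary-data accounting --- which you yourself identify as the crux --- contains errors that prevent the argument from landing on the stated $\underline{\fs}$. Your trace indices are off by one: if $\U\in\fH^{\fq+1}(\fD)$ then $\gamma_0\U\in H^{\fq+\half}(\Gamma)$ and $\gamma_1\U\in H^{\fq-\half}(\Gamma)$, not $H^{\fq-\half}$ and $H^{\fq-\frac{3}{2}}$ as you write. With your indices, $\divg_\Gamma\bv\,\gamma_1\U$ lands in $H^{\min(\fq-\frac{3}{2},\,\fk)}(\Gamma)$, so your own term-by-term count gives $m_2\Rrightarrow\min\left(\fq-\frac{3}{2},\fr-1,\fk\right)$, which matches neither the $\min\left(\fq-\half,\fr-\half,\fk\right)$ you then assert for the datum nor the paper's $m_\beta\Rrightarrow-\half+\dot{\fs}_m$ with $\dot{\fs}_m=\min\left(\fq,\fr-\half,\fk+\half\right)$. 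The final shift is also wrong: Neumann data in $H^{t}(\Gamma)$ yields a solution in $H^{t+\frac{3}{2}}$ (in the paper's convention $\mH^{\fq}(\Gamma)=H^{\fq-\half}(\Gamma)$ for $\beta=1,2$), so ``datum in $H^{\underline{\fs}}$ gives $\dot{\U}\Rrightarrow\underline{\fs}+1$'' is off by $\half$, and your $\underline{\fs}=\min\left(\fq-\half,\fr-\half,\fk\right)$ is not the theorem's $\min\left(\fq,\fr-\half,\fk\right)$. The correct chain is: $m_\beta\in H^{\min(\fq-\half,\,\fr-1,\,\fk)}(\Gamma)$, hence a boundary contribution of $\min\left(\fq,\fr-\half,\fk+\half\right)+1$ to the solution index, intersected with the source contribution $\min(\fq,\fk)+1$, which gives $\underline{\fs}+1$. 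Note too that the $\fr-\half$ is not forced by a ceiling in the shift theorem, as you claim, but by $\nabla_\Gamma(\bv\cdot\bn)\in W^{\min(\fr-1,\fk),\infty}(\Gamma)$, i.e., the limited smoothness of $\bn$.

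Separately, the second half of the statement, $\dot{(\gamma\U)}\in\IH^{\dot{\fs}}(\Gamma)$, concerns the material derivative of the Cauchy data, not the trace $\gamma\dot{\U}$ of the material derivative; by \cref{thm:shape_cauchy} the Neumann component carries the correction terms $-\nabla\bv^T\U\cdot\bn-\nabla_\Gamma\U\cdot\nabla_\Gamma(\bv\cdot\bn)$, whose regularity must be checked on its own (the paper does this, including the observation $\nabla_\Gamma\U=0$ on $\Gamma$ for $\beta=0$). Your proposal conflates the two objects and omits this part of the proof entirely.
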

\begin{proof}We start by characterizing the source term in \cref{pb:MD_P012}:
\begin{align*}
f_\beta & = \divg (A'(0) \nabla \U) + \kappa^2 \divg(\bv) \U\\
&\Rrightarrow (\min(\fk, \fq) -1)\cap \min(\fk, \fq+1)  =-1 + \min(\fq,\fk)= -1 + \overline{\fs}.
\end{align*}
Identically, we obtain the same regularity estimate for $\beta = 3$. Therefore
\be
\|f_\boldbeta\|_{\fH^{\overline{\fs} -1 } (\fD)}\leq C \|\U\|_{\fH^{\fq + 1} (\fD)}  \|\bv\|_{W^{\fk+1,\infty}(\fD)}.
\ee
Next, we set $\dot{\fs}_{m} : = \min\left(\fq, \fr-\half, \fk+\half\right)$. The data read $m_0=0$, and by \cref{lemma:product}, it holds that
\begin{align*}
m_1 & : = - \nabla \bv^T \cdot \U  \cdot \bn  + \nabla_\Gamma \U \cdot \nabla_\Gamma (\bv \cdot \bn)\\
& \Rrightarrow \min\left( \fk ,\half+ \fq, \fr\right) \cap \min\left(-\half+\fq, \fk ,\fr-1\right) \\
& \Rrightarrow \min\left(-\half + \fq , \fr-1, \fk\right) = -\half + \dot{\fs}_m\textup{, and}\\
m_2 &: = - \nabla \bv^T \cdot \U  \cdot \bn  + \nabla_\Gamma \U \cdot \nabla_\Gamma (\bv \cdot \bn) + \divg_\Gamma \bv \gamma_1 \U\\
& \Rrightarrow \min\left(\fk, \half + \fq, \fr \right) \cap \min\left( -\half+\fq , \fk, \fr-1 \right) \cap \min\left(\fk , -\half + \fq\right)\\
& \Rrightarrow \min\left(-\half+\fq, \fr-1, \fk\right)= -\half + \dot{\fs}_m.
\end{align*}
Besides, for $\beta = 1,2,3$
\be 
\label{eq:cd_mbeta}
\|m_\beta\|_{\mH^{\dot{\fs}_m}(\Gamma)}\leq C \|\xi_\beta\|_{\mH^\fq(\Gamma)}\|\bv\|_{W^{\fk+1,\infty}(\Gamma)}.
\ee
To summarize, $f_\boldbeta\Rrightarrow -1 + \overline{\fs}$, and $m_1$, $m_2$, $m_3\Rrightarrow -\half + \dot{\fs}_m$. Provided that $\overline{\fs}, \dot{\fs}_m >0$, we deduce by elliptic regularity that $\dot{\U} \in \fH^{\dot{\fs} + 1} (\fD)$, with $\dot{\fs}: = \overline{\fs}$ for $\beta=0$, and $\dot{\fs}:=\underline{\fs}$ for $\beta=1,2,3$. Next, according to \cref{eq:CMD}, $\dot{\lambda} = \dot{(\gamma_0\U)} = \gamma \dot{\U} \in H^{\dot{\fs}}(\Gamma)$. For $\beta=0$, since $\U|_\Gamma = 0$ hence $\nabla_\Gamma \U =0$, one can write $\dot{\sigma} = \gamma_1\dot{\U}\in H^{\overline{\fs}}(\Gamma)$ by \cref{eq:CSD}. Next, for $\beta=1,2,3$,
\begin{align*}
\dot{\sigma}& \Rrightarrow  \min\left(\fk-\half, \fq-\half\right) \cap \min\left(\fk, \fq +\half , \fr\right) \cap \min\left(-\half +\fq, \fk ,\fr-1\right)\\
& \Rrightarrow \min \left(-\half + \fq, \fr -1 , -\half + \fk\right)\\
& \Rrightarrow -\half + \min \left(\fq, \fr - \half, \fk\right)= -\half + \underline{\fs},
\end{align*}
thus the final result.
\end{proof}
\begin{corollary}\label{coro:MD}
Consider the setting of \cref{thm:regularityMD}. Then
\be\label{eq:MD_cont}
\|\dot{\U} \|_{\fH^{\dot{\fs}+1}(\fD)} \leq  C \big(\|f_\boldbeta \|_{\fH^{\dot{\fs}-1}(\fD)} + \|m_\boldbeta\|_{\mH^{\dot{\fs}}(\Gamma)} \big) \leq C\|\bv\|_{W^{\fk+1,\infty}(\fD)}
\ee
and 
\be 
\label{eq:CMD_cont}
\|\dot{(\gamma\U)} \|_{\IH^{{\dot{\fs}}}(\Gamma)} \leq C \|\bv\|_{W^{\fk+1,\infty}(\fD)}.
\ee
\end{corollary}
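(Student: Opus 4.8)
The plan is to read off both inequalities directly from the characterization established in the proof of \cref{thm:regularityMD}, combined with the continuous dependence statements of \cref{lemma:shift}. Indeed, the first inequality in \cref{eq:MD_cont} is nothing more than the elliptic a priori estimate for the boundary value problems (MP$_\boldbeta$): since $\dot{\U}$ solves (MP$_\boldbeta$) with right-hand side $f_\boldbeta$ and boundary data $m_\boldbeta$, and since we have just verified in \cref{thm:regularityMD} that $f_\boldbeta \in \fH^{\dot{\fs}-1}(\fD)$ and $m_\boldbeta \in \mH^{\dot{\fs}}(\Gamma)$ with $\dot{\fs}>0$, the same family of regularity/stability results invoked in the proof of \cref{lemma:shift} (Grisvard's estimates for strongly elliptic operators on $C^{\fr,1}$-domains, adapted to the weighted spaces and to the transmission case) applies verbatim to (MP$_\boldbeta$), whence $\|\dot{\U}\|_{\fH^{\dot{\fs}+1}(\fD)} \leq C(\|f_\boldbeta\|_{\fH^{\dot{\fs}-1}(\fD)} + \|m_\boldbeta\|_{\mH^{\dot{\fs}}(\Gamma)})$.

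Next I would chain this with the bounds on the data already derived inside the proof of \cref{thm:regularityMD}. There we showed, via \cref{lemma:product} and the multiplier/trace arguments, that
\begin{align*}
\|f_\boldbeta\|_{\fH^{\dot{\fs}-1}(\fD)} &\leq C\,\|\U\|_{\fH^{\fq+1}(\fD)}\,\|\bv\|_{W^{\fk+1,\infty}(\fD)},\\
\|m_\boldbeta\|_{\mH^{\dot{\fs}}(\Gamma)} &\leq C\,\|\xi_\boldbeta\|_{\mH^{\fq}(\Gamma)}\,\|\bv\|_{W^{\fk+1,\infty}(\Gamma)},
\end{align*}
where for $\beta=0$ one has $m_0=0$ so only the source term contributes, and for $\beta=1,2,3$ estimate \cref{eq:cd_mbeta} is exactly what is needed (with the convention $\dot\fs\le\dot\fs_m$ so that the $\mH^{\dot\fs}$-norm is controlled by the $\mH^{\dot\fs_m}$-norm). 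Finally, invoking \cref{lemma:shift} to bound $\|\U\|_{\fH^{\fq+1}(\fD)} \leq C\|\xi^\textup{inc}\|_{\mH^{\fq}(\Gamma)}$ and absorbing the fixed quantity $\|\xi^\textup{inc}\|_{\mH^{\fq}(\Gamma)}$ (the incident wave is given and independent of the perturbation) into the generic constant $C$, we obtain $\|\dot{\U}\|_{\fH^{\dot{\fs}+1}(\fD)} \leq C\|\bv\|_{W^{\fk+1,\infty}(\fD)}$, which is \cref{eq:MD_cont}.

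For \cref{eq:CMD_cont}, I would argue on the two components of the CMD in \cref{eq:CMD} separately. The Dirichlet component $\dot\lambda = \gamma_0\dot\U$ is controlled by $\|\dot\U\|_{\fH^{\dot\fs+1}(\fD)}$ through continuity of the trace $\gamma_0 : \fH^{\dot\fs+1}(\fD)\to H^{\dot\fs}(\Gamma)$, hence by $C\|\bv\|_{W^{\fk+1,\infty}(\fD)}$ via \cref{eq:MD_cont}. For the Neumann component, $\dot\sigma = \gamma_1\dot\U - \nabla\bv^T\U\cdot\bn - \nabla_\Gamma\U\cdot\nabla_\Gamma(\bv\cdot\bn)$: the first term is bounded using continuity of $\gamma_1:\fH^{\dot\fs+1}(\fD)\to H^{\dot\fs-1}(\Gamma)$ (or, in the $\beta=0$ case, directly $\gamma_1\dot\U\in H^{\overline\fs}(\Gamma)$ as noted in the proof of \cref{thm:regularityMD}), while the remaining two terms were precisely the ones analyzed in the $\dot\sigma\Rrightarrow -\tfrac12+\underline\fs$ computation there, so \cref{lemma:product} together with \cref{lemma:shift} again yields a bound by $C\|\U\|_{\fH^{\fq+1}(\fD)}\|\bv\|_{W^{\fk+1,\infty}(\Gamma)} \leq C\|\bv\|_{W^{\fk+1,\infty}(\fD)}$. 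Summing the two components gives \cref{eq:CMD_cont}.

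The only genuinely non-routine point is bookkeeping the regularity indices: one must make sure the smoothness actually produced for $f_\boldbeta$ and $m_\boldbeta$ is at least $\dot\fs-1$ and $\dot\fs$ respectively, i.e.\ that $\overline\fs \le \dot\fs_m$ and that the $\min$'s collapse to $\dot\fs=\overline\fs$ (for $\beta=0$) or $\dot\fs=\underline\fs$ (otherwise); but this is exactly what was checked in \cref{thm:regularityMD}, so here it only needs to be cited. Everything else is an application of the elliptic a priori estimate plus \cref{lemma:product} and \cref{lemma:shift}, with the incident-wave norm absorbed into $C$.
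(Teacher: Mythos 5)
Your proof is correct and follows essentially the same route as the paper's, which simply invokes continuous dependence on the data of $\dot{\U}$ from \cref{thm:regularityMD} for \cref{eq:MD_cont} and then deduces \cref{eq:CMD_cont} from the already-established CMD regularity; you merely make explicit the elliptic a priori estimate, the data bounds \cref{eq:cd_mbeta}, and the absorption of the fixed incident-wave norm into $C$. The only (cosmetic) slip is that the Neumann trace maps $\fH^{\dot{\fs}+1}(\fD)$ into $H^{\dot{\fs}-\half}(\Gamma)$ rather than $H^{\dot{\fs}-1}(\Gamma)$, the former being the target actually required for $\dot{\sigma}$ to land in the second component of $\IH^{\dot{\fs}}(\Gamma)$.
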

\begin{proof}
The proof of \cref{eq:MD_cont} is straightforward by continuous dependence on the data of $\dot{\U} \in \fH^{\fs+1}(\fD)$ in \cref{thm:regularityMD}. Next, $\dot{(\gamma\U)}\in \IH^{\dot{\fs}}(\Gamma)$ and \cref{eq:MD_cont} give \cref{eq:CMD_cont}.
\end{proof}
\begin{remark}[Lipschitz domains]
For Lipschitz domains, i.e.~for $\fr=0$, the previous analysis can be conducted similarly, but does not guarantee well-posedness for the MD for $\beta=1,2,3$ due to the $(\fr-\half)<0$ term in the boundary data. Still, for $\beta=0$, one retrieves an established well-posedness result---see e.g.,~\cite[Proposition 6.30]{Allaire} for Laplace. For example, for $\fq=\fk=0$, one gets $f_0 \in \fH^{-1}(\fD)$ hence $\U \in \fH^1(\fD)$.
\end{remark}
\begin{remark}[Source terms and tilde-spaces]
In \Cref{thm:regularityMD}, we assumed restrictions $\fq\geq 1$ and $\fk \geq 1$ to ensure the source term $f_\boldbeta$ to be at least in $L^2$. The case when $\fq,\fk \in [0,1]$---for non-smooth data---raises two technical issues. To begin with, problems $\beta=1,2,3$ require $f_\beta$ to be in the tilde Sobolev space $\widetilde{\fH}^{-t}(\fD)$ \cite{MacLean} for $0\leq t \leq 1$, which is not guaranteed under this setting, as one obtains results in $\fH^{-t}({\fD})$. Furthermore, non-smooth data lead to shift results for $f_\beta \in \fH^{-\half}(\fD)$, which is a problematic and rather technical issue \cite[Section 1]{amrouche2011regularity}.
\end{remark}
Next, we derive a similar result for the SD.
\begin{theorem}
\label{thm:regularitySD}
Consider the setting of \cref{lemma:shift} along with a transformation $\bv \in W^{\fk+1,\infty}(\Gamma)$, $0 \leq \fk \leq \fr$ and set:
\be 
\tilde{\fs} : = \min\left( \fq-1,\fk + \half\right).
\ee
Then, it holds that
\be
\U' \in \fH^{\tilde{\fs}+1}(\fD)\quad \text{and} \quad(\gamma \U)' \in \IH^{\tilde{\fs}}(\Gamma).
\ee
\end{theorem}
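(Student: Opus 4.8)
The plan is to repeat, for the shape problems (SP$_\boldbeta$) and the shape Cauchy data \cref{eq:CSD}, the three--step scheme already used for the material derivative in \cref{thm:regularityMD}. \emph{Step (i).} By \cref{lemma:shift} the nominal field satisfies $\U\in\fH^{\fq+1}(\fD)$, so the trace mappings give $\gamma_0\U\Rrightarrow\fq+\half$, $\gamma_1\U\Rrightarrow\fq-\half$, $\nabla_\Gamma\U\Rrightarrow\fq-\half$ and the second--order normal trace $\gamma_2\U\Rrightarrow\fq-\tfrac32$ (this last being meaningful through the equation $\Delta\U=-\kappa^2\U$). Recall also from \cref{subs:domain_transformation} that $(\bv\cdot\bn)\in W^{\min(\fr,\fk+1),\infty}(\Gamma)$, whence $\nabla_\Gamma(\bv\cdot\bn)\in W^{\min(\fr-1,\fk),\infty}(\Gamma)$. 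Since $\fq\leq\fr$, every $\min$--term carrying the domain index $\fr$ will be dominated by one carrying $\fq$; this is exactly why $\tilde{\fs}$ has no $\fr$--contribution here, whereas in \cref{thm:SD} the limit shift theorems bring it back.

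\emph{Steps (ii)--(iii).} Estimate the boundary data of (SP$_\boldbeta$) using \cref{lemma:product} and its vector analogue, together with the fact that $\divg_\Gamma$ costs one tangential derivative. For $\beta=0$, $g_0=-\gamma_1\U\,(\bv\cdot\bn)\Rrightarrow\min(\fq-\half,\fk+1)=\tilde{\fs}+\half$; for $\beta=1,2$, the leading term $\divg_\Gamma\big((\bv\cdot\bn)\nabla_\Gamma\U\big)$ gives $g_1,g_2\Rrightarrow\min(\fq-\tfrac32,\fk)=\tilde{\fs}-\half$ (the zeroth--order and impedance corrections being no rougher); for $\beta=3$, $g_3^0\Rrightarrow\tilde{\fs}+\half$ and $g_3^1\Rrightarrow\tilde{\fs}-\half$, the constant jumps $[\mu^{-1}]_\Gamma$, $[\kappa^2]_\Gamma$ playing no role. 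A crucial simplification over (MP$_\boldbeta$) is that (SP$_\boldbeta$) carries \emph{no source term}, so the tilde--space obstruction mentioned in the remarks following \cref{thm:regularityMD} does not arise. Feeding these data into the inhomogeneous--boundary--data form of the estimates behind \cref{lemma:shift} (exterior Dirichlet/Neumann/impedance shift in $C^{\fr,1}$ domains as in \cite{Grisvard,Nedelec}, and the transmission shift \cite[Theorem 4.20]{MacLean}), which sends Dirichlet data in $H^{s+\half}(\Gamma)$, resp.~Neumann--type data in $H^{s-\half}(\Gamma)$, to a solution in $\fH^{s+1}(\fD)$, yields $\U'\in\fH^{\tilde{\fs}+1}(\fD)$ in all four cases, the index $\tilde{\fs}\leq\fq-1\leq\fr-1$ remaining admissible (in the low--regularity range $\U'$ is understood as a very weak solution, consistently with \cref{def:domain_derivative}).

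For the boundary shape derivative one cannot use $(\gamma\U)'=\gamma\U'$, but must appeal to \cref{eq:CSD}: $\bxi'=\big(\gamma_0\U'+\gamma_1\U\,(\bv\cdot\bn),\ \gamma_1\U'+\gamma_2\U\,(\bv\cdot\bn)-\nabla_\Gamma\U\cdot\nabla_\Gamma(\bv\cdot\bn)\big)$. From $\U'\in\fH^{\tilde{\fs}+1}(\fD)$ one has $\gamma_0\U'\Rrightarrow\tilde{\fs}+\half$, $\gamma_1\U'\Rrightarrow\tilde{\fs}-\half$, and the correction terms are checked, again via \cref{lemma:product} and Step (i), to be at least that regular: $\gamma_1\U\,(\bv\cdot\bn)\Rrightarrow\tilde{\fs}+\half$, $\gamma_2\U\,(\bv\cdot\bn)\Rrightarrow\min(\fq-\tfrac32,\fk+1)\geq\tilde{\fs}-\half$, and $\nabla_\Gamma\U\cdot\nabla_\Gamma(\bv\cdot\bn)\Rrightarrow\min(\fq-\half,\fr-1,\fk)\geq\tilde{\fs}-\half$; hence $(\gamma\U)'\in\IH^{\tilde{\fs}}(\Gamma)$. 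For $\beta=0$ this is even sharper, as $\nabla_\Gamma\U=0$ and $\gamma_0\U'=g_0=-\gamma_1\U(\bv\cdot\bn)$ force $\lambda'\equiv0$; the transmission case is handled componentwise, traces being replaced by jumps. The accompanying stability bound in $\|\bv\|_{W^{\fk+1,\infty}(\fD)}$ is then a routine corollary, as in \cref{coro:MD}.

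The one genuine difficulty is the exact bookkeeping in these data estimates and in the Cauchy--data check: tracking the single--derivative loss of $\divg_\Gamma$ in $g_1,g_2,g_3^1$, controlling the second--order trace term $\gamma_2\U\,(\bv\cdot\bn)$---which is precisely why $\U$ must be known in $\fH^{\fq+1}(\fD)$, so that $\gamma_2\U$ is a genuine element of $H^{\fq-\tfrac32}(\Gamma)$---and making sure every resulting $\min$ collapses to $\tilde{\fs}=\min(\fq-1,\fk+\half)$ once the $\fr$--terms are discarded using $\fk\leq\fr$ and $\fq\leq\fr$. Conceptually, though, the whole argument is nothing more than \cref{lemma:shift}, \cref{lemma:product}, and the explicit formulas (SP$_\boldbeta$) and \cref{eq:CSD}, run through the same machine as in the material--derivative case.
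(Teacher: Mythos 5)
Your proposal is correct and follows essentially the same route as the paper: exploit the absence of a source term in (SP$_\boldbeta$), bound the boundary data $g_\boldbeta$ via \cref{lemma:shift} and \cref{lemma:product} to land at $\tilde{\fs}\pm\half$, apply the elliptic shift to get $\U'\in\fH^{\tilde{\fs}+1}(\fD)$, and then verify term by term from \cref{eq:CSD} that the Cauchy corrections $\gamma_1\U(\bv\cdot\bn)$, $\gamma_2\U(\bv\cdot\bn)$ and $\nabla_\Gamma\U\cdot\nabla_\Gamma(\bv\cdot\bn)$ are no rougher than the traces of $\U'$. Your extra observations (explicit treatment of $\beta=3$, and $\lambda'\equiv 0$ for the sound-soft case) are consistent refinements, not departures.
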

\begin{proof}
As the SD has no source term (refer to \cref{subs:helmholtz_scattering_shape}), we can move straightly to the data:
\begin{align*}
g_0 =&  - \gamma_1\U (\bv \cdot \bn)\\
\Rrightarrow & \min \left(\fq -\half, \fk+1,\fr\right) =\half + \min\left(\fq-1, \fk + \half\right),\\\noalign{\vspace{3pt}}
g_1 =& \divg_{\Gamma} \big((\bv \cdot \bn) \nabla_{\Gamma} \U\big) + \kappa^2 \gamma_0\U ( \bv \cdot \bn )\\
\Rrightarrow & \left(\min (\fk+1,\fr,\fq-\half)-1\right) \cap \min\left(\fq+\half,\fk+1,\fr\right)\\
 \Rrightarrow& \min\left(\fk , \fr-1 ,\fq-\frac{3}{2}\right)= -\half + \min\left(\fq-1,\fk+\half\right),\\\noalign{\vspace{3pt}}
g_2 = & g_1 -\imath \eta  (\bv \cdot \bn)\gamma_1 \U  -\imath \eta  (\bv \cdot \bn) \mathfrak{H} \gamma_0 \U\\
\Rrightarrow & \left(-\half + \tilde{\fs}\right) \cap \min\left(\fk+1, \fr\right) \cap \min\left(\fk+1,\fr, \fr-1, \fq+\half\right)\\
\Rrightarrow & \left(-\half + \tilde{\fs}\right) \cap \min\left(\fk+1, \fr-1, \fq+\half\right) \Rightarrow -\half+\tilde{\fs}.
\end{align*}
Consequently, as $\tilde{\fs}>0$, we conclude that $\U'\in \fH^{\tilde{\fs}+1}(\fD)$. Besides, $\gamma\U' \in \IH^{\tilde{\fs}}(\Gamma)$. Next, concerning the CSD, it holds that
\begin{align*}
\lambda' & =\gamma_0\U'  + \gamma_1 \U (\bv \cdot \bn)\\
&\Rrightarrow \min \left(\half + \tilde{\fs}\right),\\\noalign{\vspace{2pt}}
\sigma' & = \gamma_1 \U' +  \gamma_2 \U (\bv \cdot \bn) -\nabla_\Gamma \U \cdot \nabla_\Gamma (\bv \cdot \bn)\\
& \Rrightarrow \left(-\half + \tilde{\fs}\right) \cap \min\left(-\frac{3}{2} + \fq , \fk + 1, \fr\right)\cap \min\left(-\half+ \fq , \fk, \fr-1\right)\\
& \Rrightarrow \left(-\half + \tilde{\fs}\right),
\end{align*}
hence the result for the CSD.
\end{proof}
\begin{corollary}\label{coro:SD}
Consider the setting of \cref{thm:regularitySD}. Then, one has that
\be\label{eq:SD_cont}
\|\U' \|_{\fH^{\tilde{\fs}+1}(\fD)} \leq C \|g_\boldbeta\|_{\mH^{\tilde{\fs}}(\Gamma)} \leq C \|\bv\|_{W^{\fk+1,\infty}(\Gamma)}
\ee
and 
\be\label{eq:CSD_cont}
\|(\gamma \U)'\|_{\IH^{\tilde{\fs}} } \leq C\|\bv\|_{W^{\fk+1,\infty}(\Gamma)}.
\ee
\end{corollary}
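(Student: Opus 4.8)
The plan is to mirror the argument used for \cref{coro:MD}, now exploiting the absence of a volume source term in \textup{(SP$_\boldbeta$)}. First I would invoke the elliptic shift estimate of \cref{lemma:shift} applied to the shape BVP \textup{(SP$_\boldbeta$)}: since \cref{thm:regularitySD} has already certified that the boundary datum $g_\boldbeta$ lies in $\mH^{\tilde{\fs}}(\Gamma)$ with $\tilde{\fs}>0$, continuous dependence on the data of the (well-posed) BVP gives directly
\[
\|\U'\|_{\fH^{\tilde{\fs}+1}(\fD)} \leq C\,\|g_\boldbeta\|_{\mH^{\tilde{\fs}}(\Gamma)},
\]
which is the first inequality in \cref{eq:SD_cont}.

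For the second inequality I would retrace the regularity bookkeeping already carried out in the proof of \cref{thm:regularitySD}: each term composing $g_\boldbeta$ (namely $\gamma_1\U\,(\bv\cdot\bn)$, $\divg_\Gamma((\bv\cdot\bn)\nabla_\Gamma\U)$, $\kappa^2\gamma_0\U\,(\bv\cdot\bn)$, and for $\beta=2$ the impedance contributions, and for $\beta=3$ the jump-coefficient analogues) is a product of a trace of $\U$ with a component of $\bv$, of $\bv\cdot\bn$, or of its surface derivatives. Applying \cref{lemma:product} to each such product and the elliptic estimate of \cref{lemma:shift} for $\U$ itself, one bounds $\|g_\boldbeta\|_{\mH^{\tilde{\fs}}(\Gamma)}$ by $C\,\|\U\|_{\fH^{\fq+1}(\fD)}\,\|\bv\|_{W^{\fk+1,\infty}(\Gamma)}$; since the incident field, hence $\|\U\|_{\fH^{\fq+1}(\fD)}$, is fixed, this last factor is absorbed into $C$, yielding $\|g_\boldbeta\|_{\mH^{\tilde{\fs}}(\Gamma)} \leq C\,\|\bv\|_{W^{\fk+1,\infty}(\Gamma)}$. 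The transmission case $\beta=3$ is handled identically after splitting across $\Gamma$.

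Finally, for \cref{eq:CSD_cont} I would combine the continuity of the trace map $\gamma:\fH^{\tilde{\fs}+1}(\fD)\to\IH^{\tilde{\fs}}(\Gamma)$, which yields $\|\gamma\U'\|_{\IH^{\tilde{\fs}}(\Gamma)}\leq C\,\|\U'\|_{\fH^{\tilde{\fs}+1}(\fD)}$, with the explicit formula \cref{eq:CSD} for $\bxi'$; the remainder terms $\gamma_1\U\,(\bv\cdot\bn)$, $\gamma_2\U\,(\bv\cdot\bn)$ and $\nabla_\Gamma\U\cdot\nabla_\Gamma(\bv\cdot\bn)$ are again estimated in $\mH^{\tilde{\fs}}(\Gamma)$ with \cref{lemma:product}, exactly as in the CSD computation at the end of \cref{thm:regularitySD}. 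Chaining these bounds with the first part gives $\|(\gamma\U)'\|_{\IH^{\tilde{\fs}}(\Gamma)}\leq C\,\|\bv\|_{W^{\fk+1,\infty}(\Gamma)}$. I do not anticipate a genuine obstacle: the substantive work, i.e.\ checking that $g_\boldbeta$ and the Cauchy remainder terms land in the claimed Sobolev scales, was already done in \cref{thm:regularitySD}, so the corollary reduces to tracking constants through the linear continuous maps involved. The only point requiring a little care is to keep explicit that all constants may depend on the wavenumbers ($\kappa$, or $\kappa_0,\kappa_1,\mu_i,\eta$), on $\Gamma$, and on the fixed incident data, but not on $\bv$, so that the dependence on $\|\bv\|_{W^{\fk+1,\infty}(\Gamma)}$ is genuinely linear.
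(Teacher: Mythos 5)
Your proposal is correct and follows essentially the same route as the paper: the first inequality in \cref{eq:SD_cont} is continuous dependence on the data for the source-free BVP \textup{(SP$_\boldbeta$)}, the second is the multiplier bookkeeping already performed in \cref{thm:regularitySD} (via \cref{lemma:product} and the shift estimate for $\U$), and \cref{eq:CSD_cont} follows from trace continuity together with the same bounds on the remainder terms in \cref{eq:CSD}. The paper's own proof is just a terser statement of exactly these steps.
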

\begin{proof}
The proof of \cref{eq:SD_cont} follows by continuous dependence on the data of $\U'\in \fH^{\tilde{\fs}+1}(\fD)$ in \cref{thm:regularitySD}. Next, $(\gamma\U)'\in \IH^{\tilde{\fs}}(\Gamma)$ and \cref{eq:SD_cont} give \cref{eq:CSD_cont}.
\end{proof}
\begin{remark}
The aforementioned BVPs for SDs allow very simple characterizations for the DDs yielding precise descriptions of the solutions' smoothness. We retrieve a known result: the SD loses one order of differentiability when compared to $\U$ for smooth deformation fields, e.g.,~for $\fr \geq \fk+\frac{3}{2}$, giving $\U \in {\fH}^{\fr+1}(\fD)$ and $\U' \in {\fH}^\fr(\fD)$. Still, we show that under less regular velocity fields, i.e.~for $\fk+\frac{3}{2} \leq \fr$, $\bv$ governs the smoothness of both he SD and the perturbed solution, leading to $\U_t$, $\dot{\U}\in {\fH}^{\fk+1}(\fD)$, whereas $\U' \in {\fH}^{\fk+\frac{3}{2}}(\fD)$. The latter seems to be a new result: in this configuration, we get a sharper regularity result for the SD than for the perturbed solution and the MD. 
\end{remark}
\subsection{Sharp shift theorem for regular domains}\label{subsec:Sharp_Shift}
So far, the results did not consider limit cases, i.e.~$\fq \geq \fr$. Besides, the scattering problem does not present source terms, allowing to use sharp regularity results. To do so, we will use the theory of Jerison and Kenig \cite{jerison1981dirichlet,jerison1981neumann} based on harmonical analysis. 
Now, we apply the steps in \cref{subsec:Classic_Shift} to the limit case. To start with, we present a sharp regularity result extending \cref{lemma:shift}. For the sake of generality, we state the result for general Lipschitz domains.
\begin{theorem}[Sharp shift theorem]
\label{thm:sharp_shift}
Consider a domain of class $C^{\fr,1}$, $\fr\in \IN_0$, assume that $\xi^\textup{inc} \in \mH^{\fq}(\Gamma)$, $\fq \geq 0$ and define $\U$ to be the unique solution of \textup{(P$_\boldbeta$)}. Then, for $\fs := \min\left(\fq , \fr+\half\right)$, it holds that
\be
\U \in \fH^{\fs +1}(\fD),\quad \gamma  \U \in \IH^{\fs}(\Gamma)
\ee
and
\be
\|\U \|_{\fH^{\fs+1}(\fD)}\leq C \|\xi^\textup{inc} \|_{\mH^{\fs}(\Gamma)}.
\ee
\end{theorem}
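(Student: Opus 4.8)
\textbf{Proof proposal for Theorem \ref{thm:sharp_shift} (Sharp shift theorem).}

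The plan is to follow the same three ingredients as in the proof of \cref{lemma:shift}, but to replace the classical Besov/Sobolev shift estimates for strongly elliptic operators on $C^{\fr,1}$-domains by the sharp Jerison--Kenig theory valid on Lipschitz domains. First I would reduce to the scattered field: since $L_\kappa\U^\textup{inc}=0$ and $\U^\textup{inc}\in\Hloc^1(\fD)$ is smooth in a neighbourhood of $\Gamma$, the Cauchy data of $\U^\textup{sc}$ on $\Gamma$ inherit the regularity of $\xi^\textup{inc}\in\mH^\fq(\Gamma)$, and it suffices to prove the stated regularity and estimate for $\U^\textup{sc}$, then recombine $\U=\U^\textup{sc}+\U^\textup{inc}$ and use continuity of traces as in \cref{lemma:shift}.

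Next I would invoke the sharp regularity results. For the half-space/Lipschitz Laplace problems, Jerison and Kenig \cite{jerison1981dirichlet,jerison1981neumann} give that the Dirichlet problem with boundary data in $H^s(\Gamma)$, $0\le s\le 1$, has solution in $H^{s+\half}$ of the domain, and likewise for the Neumann problem with data in $H^{s-1}(\Gamma)$; on a $C^{\fr,1}$-domain the smoothing can be bootstrapped up to the limit $s=\fr+\half$, which is exactly where the quantity $\fs:=\min(\fq,\fr+\half)$ comes from. The Helmholtz term $-\kappa^2\U$ is a compact (lower-order) perturbation, so it does not affect the shift index; one handles it either by moving $\kappa^2\U$ to the right-hand side and using that $\U\in\fH^1(\fD)$ already (from \cref{lemma:shift} with $\fq=0$) together with a Neumann-series/Fredholm argument for well-posedness, or by citing the exterior mapping properties in weighted spaces (\cite[Theorem 2.5.21]{Nedelec}) adapted to the sharp exponent. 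For $\beta=3$ one uses the transmission version (\cite[Theorem 4.20]{MacLean}) with the Jerison--Kenig smoothing on each side of $\Gamma$; the impedance case $\beta=2$ follows since $\imath\eta\gamma_0\U$ is again a lower-order boundary perturbation. In each case one gets $\U^\textup{sc}\in\fH^{\fs+1}(\fD)$ with $\gamma\U^\textup{sc}\in\IH^{\fs}(\Gamma)$ together with the norm bound, which transfers to $\U$.

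I expect the main obstacle to be the bookkeeping at the \emph{limit exponent} $\fs=\fr+\half$ (the case $\fq\ge\fr+\half$): the classical shift theorems used in \cref{lemma:shift} stop at integer-plus-Lipschitz regularity, so one genuinely needs the harmonic-analysis machinery of \cite{jerison1981dirichlet,jerison1981neumann} (and its $C^{\fr,1}$ generalisations, e.g.\ via Costabel's work referenced as \cite{costabel2019limit}) to squeeze out the extra half-derivative, and to verify that it applies uniformly in the exterior/weighted setting and for the transmission configuration. A secondary technical point is ensuring the constant $C$ in the estimate is uniform and that well-posedness (uniqueness via $\SRC$) is preserved at the sharp regularity level — this is where the compactness of the $\kappa^2$-term and of the lower-order boundary terms is used, via a standard Fredholm alternative together with the uniqueness already guaranteed by the radiation condition.
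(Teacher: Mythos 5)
Your proposal follows essentially the same route as the paper: the case $\fq\le\fr$ is delegated to the classic shift theorem, the limit case $\fq=\fr+\half$ is handled by the sharp Dahlberg/Jerison--Kenig estimates (the paper cites these via \cite{costabel1990remark}) using the absence of a source term, and the lower-order Helmholtz and impedance terms are treated perturbatively. The only details the paper makes explicit that you gloss over are the use of interpolation to cover $\fq<\fr+\half$ (rather than bootstrapping) and the continuity of the Dirichlet-to-Neumann map at the limit exponent to obtain $\gamma\U\in\IH^{\fs}(\Gamma)$, since the standard trace theorem does not directly give the Neumann trace at this borderline regularity.
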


\begin{proof}
Remark that the case $\fq \leq \fr$ corresponds to the classic shift theorem. Now, consider the limit case $\fq= \fr+\half$. Since there is no source term, we apply Dalhberg estimates and deduce that $\U \in \fH^{\fr +\frac{3}{2}}(\fD)$ \cite{costabel1990remark}. In this case, we use the continuity of the Dirichlet-to-Neumann map \cite{MacLean} in the limit case to deduce that $\gamma \U \in \IH^\fs (\fD)$. The range $\fq < \fr + \half$ follows by interpolation, and for $\fq > \fr+\half$, smoothness is limited by $\fr$.
\end{proof}
Consequently, and following \cref{lemma:regularity_others}, we remark that $\U_t \in \fH^{\min \left(\fq, \fk +\half\right)+1}(\fD)$, and $\U_t\circ T_t \in \fH^{\min (\fq , \fk) +1 } (\fD)$. Hence, we have only sharpen estimate for the boundary term:
\be 
\nabla_\Gamma \U \cdot \bv_\Gamma \in H^{\half +\min \left(\fq-1,\fr-\half,\fk + \half\right)}(\Gamma).
\ee
This allows us to extend \cref{lemma:regularity_others} as follows
\begin{lemma}\label{lemma:sharp_regularity_others}Consider the setting of \cref{thm:sharp_shift} along with a transformation $\bv \in W^{\fk+1,\infty}(\Gamma)$, $0 \leq \fk \leq \fr$, and set $\fs_t := \min\left(\fq, \fk+\half\right)$. Then
\begin{align*}
\U_t \circ T_t &\in \fH^{\fs_t+ 1} (\fD_t), \quad 
\nabla \U \cdot \bv \in \fH^{\min(\fq - 1 ,\fk) + 1} (\fD)\quad \textup{and}\\
\nabla_\Gamma \U \cdot \bv_\Gamma & \in H^{\half + \min\left(\fq-1,\fk+\half\right)}(\Gamma).
\end{align*}
\end{lemma}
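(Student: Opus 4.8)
The plan is to rerun, essentially word for word, the three‑step argument of \cref{lemma:regularity_others}, with the single change that every use of the classic shift theorem \cref{lemma:shift} is replaced by the sharp one, \cref{thm:sharp_shift}. First, I would apply \cref{thm:sharp_shift} on the nominal domain $\fD\in C^{\fr,1}$, which upgrades $\U\Rrightarrow 1+\fq$ into the sharp bound $\U\Rrightarrow 1+\min(\fq,\fr+\tfrac12)$, together with the corresponding trace statement and continuous dependence on the data; and, independently, apply \cref{thm:sharp_shift} to $\U_t$ on the perturbed domain $\fD_t\in C^{\fk,1}$, which yields $\U_t\Rrightarrow 1+\min(\fq,\fk+\tfrac12)=1+\fs_t$. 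This bound, stated on $\fD_t$, is the first assertion; the reference‑configuration version $\U_t\circ T_t\Rrightarrow 1+\min(\fq,\fk)$ on $\fD$ then follows exactly as in \cref{lemma:regularity_others}, since a $W^{\fk+1,\infty}$ bi‑Lipschitz change of variables acts boundedly on $\fH^{s}$ only for $s\le\fk+1$ and $\min(\fs_t+1,\fk+1)=\min(\fq,\fk)+1$.

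The two remaining claims I would deduce by combining these sharp bounds with the trace maps and the multiplier estimate \cref{lemma:product} (together with its interior and weighted counterpart for the volume term), keeping the $\Rrightarrow$ bookkeeping of this section. From $\U\Rrightarrow 1+\min(\fq,\fr+\tfrac12)$ one has $\nabla\U\Rrightarrow\min(\fq,\fr+\tfrac12)$, hence, multiplying by $\bv\Rrightarrow\fk+1$,
\[
\nabla\U\cdot\bv\;\Rrightarrow\;\min\big(\fq,\fr+\tfrac12,\fk+1\big)\;=\;1+\min\big(\fq-1,\fr-\tfrac12,\fk\big),
\]
which collapses to the announced $1+\min(\fq-1,\fk)$ in the generic range $\fk<\fr$. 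On the boundary, the Dirichlet trace of the sharp bound is $\gamma_0\U\Rrightarrow\tfrac12+\min(\fq,\fr+\tfrac12)$, whence $\nabla_\Gamma\U\Rrightarrow\min(\fq-\tfrac12,\fr)$; multiplying by the surface velocity $\bv_\Gamma\Rrightarrow\min(\fr,\fk+1)$ via \cref{lemma:product} gives $\nabla_\Gamma\U\cdot\bv_\Gamma\Rrightarrow\tfrac12+\min(\fq-1,\fr-\tfrac12,\fk+\tfrac12)$, which is the announced surface regularity.

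I expect the genuinely delicate point to be the limit regime itself. There the Sobolev orders produced by \cref{thm:sharp_shift} are truly fractional and may exceed the smoothness $\fk+1$ of the velocity field (resp.~of the transformation $T_t$): \cref{lemma:product} and the composition estimate can then no longer carry the full order, and one has to argue---consistently with the discussion preceding this lemma---that the extra half‑order gained for $\U_t$ on $\fD_t$ is \emph{not} inherited by $\U_t\circ T_t$ on the reference configuration, which is precisely the gap between $\fs_t=\min(\fq,\fk+\tfrac12)$ and $\min(\fq,\fk)$. A secondary, routine point is that all multiplier and pull‑back estimates go through unchanged in the weighted exterior space $H^{s}_\kappa(D^c)$, since the weight $(1+|\bx|^2)^{-(d-1)/2}$ is smooth and, after extension, $\bv$ is compactly supported in a fixed hold‑all domain, so composition does not interfere with the behavior at infinity.
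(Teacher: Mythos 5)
Your argument follows exactly the route the paper takes (the paper gives no formal proof here---its justification is the paragraph preceding the lemma, which reruns the proof of the classic-case lemma with the sharp shift theorem substituted for the classic one), and your $\Rrightarrow$ bookkeeping is correct. Your two caveats are well taken and in fact more careful than the lemma as printed: the paper's own preceding text agrees with you that $\U_t\circ T_t$ only retains order $\min(\fq,\fk)+1$ on the reference configuration (so attributing $\fs_t=\min\left(\fq,\fk+\half\right)$ to the composed function, and placing it on $\fD_t$ rather than $\fD$, is a slip in the statement), and the extra $\fr-\half$ cap you retain in the borderline case $\fk=\fr$ also appears in the paper's displayed boundary estimate just before the lemma but is dropped from the statement itself.
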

As a consequence, we present the generalization of \cref{thm:regularityMD}.
\begin{theorem}
\label{thm:sharp_regularityMD}
Consider the setting of \cref{lemma:shift} along with a transformation $\bv \in W^{\fk+1,\infty}(\Gamma)$, $0 \leq \fk \leq \fr$, and set:
\be 
\dot{\fs} : =  \left\{ \begin{array}{lll}\overline{\fs}:=\min(\fq,\fk) \quad \text{for}\quad\beta = 0,\\\underline{\fs}:=\min\left(\fq, \fr - \half, \fk\right)\quad  \text{ for}\quad\beta = 1,2,3.\end{array}\right.
\ee
Then
\be 
\dot{\U} \in \fH^{\dot{\fs} + 1} (\fD)\quad \text{and}\quad \dot{(\gamma \U)} \in \IH^{{\dot{\fs}}}(\Gamma).
\ee
\end{theorem}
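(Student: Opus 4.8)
The plan is to mirror the proof of \cref{thm:regularityMD} almost verbatim, substituting the sharp shift estimates of \cref{thm:sharp_shift} and \cref{lemma:sharp_regularity_others} for the classic ones wherever the limit regime $\fq\geq\fr$ can now be reached. First I would record, from \cref{thm:sharp_shift}, that the nominal solution satisfies $\U\Rrightarrow\fs+1$ with $\fs=\min(\fq,\fr+\half)$, and that $\U_t\circ T_t\Rrightarrow\min(\fq,\fk)+1$ from \cref{lemma:sharp_regularity_others}. The key point is that the source term $f_\boldbeta=\divg(A'(0)\nabla\U)+\kappa^2\divg(\bv)\U$ only ever sees one derivative of $\U$, so its regularity is governed by $\min(\fs,\fq+1,\fk)-1=\min(\fq,\fr+\half,\fk)-1$; since by hypothesis $\fk\leq\fr<\fr+\half$, this collapses to $-1+\min(\fq,\fk)=-1+\overline{\fs}$, exactly as before, with no gain but also no loss. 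Thus $f_\boldbeta\Rrightarrow-1+\overline{\fs}$ in all cases $\beta=0,1,2,3$.

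Next I would treat the boundary data. Using \cref{lemma:product} together with the sharp trace estimate $\nabla_\Gamma\U\cdot\bv_\Gamma\in H^{\half+\min(\fq-1,\fr-\half,\fk+\half)}(\Gamma)$ from \cref{lemma:sharp_regularity_others}, one finds for $\beta=1,2,3$ that $m_\beta\Rrightarrow-\half+\dot{\fs}_m$ with $\dot{\fs}_m:=\min(\fq,\fr-\half,\fk+\half)$, while $m_0=0$. Here the $(\fr-\half)$ entry is what encodes the limit case: even when $\fq$ is large, the domain regularity $\fr$ caps the boundary data at $\fr-\half$. Then, as in \cref{thm:regularityMD}, I would invoke elliptic regularity (now in the sharp form) for (MP$_\boldbeta$): since $f_\boldbeta\Rrightarrow\overline{\fs}-1$ and $m_\boldbeta\Rrightarrow\dot{\fs}_m-\half$ with both $\overline{\fs},\dot{\fs}_m>0$, we get $\dot{\U}\in\fH^{\dot{\fs}+1}(\fD)$ where $\dot{\fs}=\overline{\fs}$ for $\beta=0$ and $\dot{\fs}=\min(\overline{\fs},\dot{\fs}_m)=\min(\fq,\fr-\half,\fk)=\underline{\fs}$ for $\beta=1,2,3$ (the $\fk$ versus $\fk+\half$ discrepancy is absorbed because $\min(\overline{\fs},\dot{\fs}_m)=\min(\fq,\fk,\fr-\half)$ once one intersects the source and data bounds).

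Finally, for the Cauchy data I would repeat the CMD computation: $\dot{\lambda}=\gamma_0\dot{\U}\in H^{\dot{\fs}}(\Gamma)$ directly, and for the Neumann component $\dot{\sigma}=\gamma_1\dot{\U}-\nabla\bv^T\U\cdot\bn-\nabla_\Gamma\U\cdot\nabla_\Gamma(\bv\cdot\bn)$ I would intersect the regularity of the three terms using \cref{lemma:product} and the sharp trace bound, obtaining for $\beta=1,2,3$ that $\dot{\sigma}\Rrightarrow\min(\fq-\half,\fr-1,\fk-\half)+\text{(appropriate shift)}=-\half+\min(\fq,\fr-\half,\fk)=-\half+\underline{\fs}$; for $\beta=0$ the Dirichlet condition forces $\nabla_\Gamma\U=0$ so $\dot{\sigma}=\gamma_1\dot{\U}\in H^{\overline{\fs}}(\Gamma)$. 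This yields $\dot{(\gamma\U)}\in\IH^{\dot{\fs}}(\Gamma)$. The only real subtlety — the step I expect to need the most care — is bookkeeping the limit case in the elliptic regularity step for (MP$_\boldbeta$): one must verify that the sharp shift theorem genuinely applies to the inhomogeneous problem with a nonzero source $f_\boldbeta$, since \cref{thm:sharp_shift} was stated for the source-free scattering problem; in practice $f_\boldbeta$ never forces us past the threshold $\fr+\half$ because $\fk\leq\fr$, so the classic shift theorem suffices for the source contribution and the sharp theorem is only needed to propagate boundary regularity, but this interplay should be stated explicitly rather than glossed over.
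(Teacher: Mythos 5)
Your proposal matches the paper's proof essentially verbatim: the paper likewise replaces $\fq$ (classic) by $\min(\fq,\fr+\half)$ (sharp) throughout the argument of \cref{thm:regularityMD}, finds $f_\boldbeta \Rrightarrow -1+\min(\fq,\fk)$ and $m_\beta \Rrightarrow -\half+\min\left(\fq,\fr-\half,\fk+\half\right)$, and concludes with the same elliptic-regularity and CMD arguments. Your closing caveat about applying the shift estimate to the inhomogeneous problem is well taken and correctly resolved (the source term never pushes past the limit threshold since $\fk\leq\fr$), a point the paper leaves implicit.
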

\begin{proof}\label{proof:sharp_regularityMD}
We replicate the proof of \cref{thm:regularityMD} by replacing $\fq$ (classic) by $\min\left(\fq ,\fr+\half\right)$ (sharp). For $\beta = 0,1,2,$, we obtain 
\begin{align*}
f_{\beta} & = \divg (A'(0) \nabla \U) + \kappa^2 \divg(\bv) \U\\
& \Rrightarrow \left(\min \left(\fq, \fr + \half\right)-1 \right) \cap (\fk-1)\\
&\Rrightarrow -1 + \min \left(\fq , \fr + \half, \fk\right)\\
& \Rrightarrow -1 + \min(\fq, \fk).
\end{align*}
Identically, we obtain the same regularity estimate for $\beta=3$. Besides, we have that
\begin{align*}
m_1  &: = - \nabla \bv^T \cdot \U  \cdot \bn  + \nabla_\Gamma \U \cdot \nabla_\Gamma (\bv \cdot \bn)\\
&\Rrightarrow\min \left(\fk, \fq + \half, \fr + 1, \fr \right)\cap \left( \min \left(\fq + 1, \fr+ \frac{3}{2}\right)- \frac{3}{2}\right)\cap \min\left(\fk+1, \fr-1\right) \\
& \Rrightarrow  -\half + \min \left(\fq, \fr - \half,\fk + \half\right).
\end{align*}
Provided that $\divg_\Gamma \bv \gamma_1\U \Rrightarrow -\half + \min \left(\fq,\fr+\half, \fk+\half\right)$, we deduce that $m_2 \Rrightarrow -\half +\dot{\fs}$.
Using these regularity estimates, we finish the proof for the MD estimates and the CMD with the same arguments as in the proof of \cref{thm:regularityMD}. 
\end{proof}
\begin{corollary}\label{coro:sharp_MD}
Consider the setting of \cref{thm:regularityMD}. Then
\be\label{eq:sharp_MD_cont}
\|\dot{\U} \|_{\fH^{\dot{\fs}+1}(\fD)} \leq  C \big(\|f_\boldbeta \|_{\fH^{\dot{\fs}-1}(\fD)} + \|m_\boldbeta\|_{\mH^{\dot{\fs}}(\Gamma)} \big) \leq C \|\bv\|_{W^{k+1,\infty}(\fD)}
\ee
and 
\be 
\|\dot{(\gamma\U)} \|_{\IH^{{\dot{\fs}}}(\Gamma)} \leq C \|\bv\|_{W^{\fk+1,\infty}(\fD)}.
\ee
\end{corollary}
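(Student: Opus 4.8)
The plan is to transcribe the argument of \cref{coro:MD}, substituting the sharp results \cref{thm:sharp_shift} and \cref{thm:sharp_regularityMD} for their classic counterparts. First I would obtain the leftmost inequality in \cref{eq:sharp_MD_cont}. By \cref{thm:sharp_regularityMD}, $\dot{\U}\in\fH^{\dot{\fs}+1}(\fD)$ solves \textup{(MP$_\boldbeta$)}, which shares its principal part and boundary operators with \textup{(P$_\boldbeta$)} and in which $\dot{\U}$ itself satisfies the radiation condition (so, unlike $\U$, it needs no splitting into incident and scattered parts); hence the classical a priori estimate for strongly elliptic operators in $C^{\fr,1}$ domains---exactly the one invoked in \cref{lemma:shift}, now carrying the nonzero right-hand side $(f_\boldbeta,m_\boldbeta)$---yields continuous dependence on the data, i.e. $\|\dot{\U}\|_{\fH^{\dot{\fs}+1}(\fD)}\leq C\big(\|f_\boldbeta\|_{\fH^{\dot{\fs}-1}(\fD)}+\|m_\boldbeta\|_{\mH^{\dot{\fs}}(\Gamma)}\big)$, with the continuity of traces transcribed verbatim from \cref{lemma:shift}.

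Next I would make the regularity bookkeeping of the proof of \cref{thm:sharp_regularityMD} quantitative. There the multiplier \cref{lemma:product} (whose constant is $1$) together with the sharp shift bound $\|\U\|_{\fH^{\fs+1}(\fD)}\leq C\|\xi^\textup{inc}\|_{\mH^{\fs}(\Gamma)}$ of \cref{thm:sharp_shift} was used to place $f_\boldbeta$ in $\fH^{\dot{\fs}-1}(\fD)$ and $m_\boldbeta$ in $\mH^{\dot{\fs}}(\Gamma)$; the identical manipulations give $\|f_\boldbeta\|_{\fH^{\dot{\fs}-1}(\fD)}\leq C\|\U\|_{\fH^{\fq+1}(\fD)}\|\bv\|_{W^{\fk+1,\infty}(\fD)}$ and $\|m_\boldbeta\|_{\mH^{\dot{\fs}}(\Gamma)}\leq C\|\xi_\boldbeta\|_{\mH^{\fq}(\Gamma)}\|\bv\|_{W^{\fk+1,\infty}(\Gamma)}$. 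Since the incident datum---hence $\U$ and $\xi^\textup{inc}$---is fixed, \cref{thm:sharp_shift} absorbs $\|\U\|_{\fH^{\fq+1}(\fD)}$ and $\|\xi_\boldbeta\|_{\mH^{\fq}(\Gamma)}$ into $C$, which closes the chain \cref{eq:sharp_MD_cont}. The Cauchy-data estimate then follows from \cref{eq:CMD}: the $\gamma_0\dot{\U}$ and $\gamma_1\dot{\U}$ contributions are controlled by trace continuity and the bound just obtained, while the remaining terms $\nabla\bv^T\U\cdot\bn$ and $\nabla_\Gamma\U\cdot\nabla_\Gamma(\bv\cdot\bn)$ are of exactly the same shape as the $m_\boldbeta$ already estimated, so \cref{lemma:product} and \cref{thm:sharp_shift} dispose of them; summing the two components in $\IH^{\dot{\fs}}(\Gamma)$ concludes.

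The one point I expect to require care is the index bookkeeping: every Sobolev order fed into \cref{lemma:product} must have absolute value at most $\fr+1$ for the multiplier estimate to apply, and the $\widetilde{\fH}$-versus-$\fH$ subtlety for the source term must not intervene. Both hold here because the standing hypotheses of \cref{thm:regularityMD} (in particular $\fq\geq1$, $\fk\geq1$) keep $f_\boldbeta$ in genuine $L^2$-based spaces of sufficiently high order, cf.\ the remark following \cref{thm:regularityMD}; beyond that, the proof is a verbatim transcription of the non-sharp \cref{coro:MD}.
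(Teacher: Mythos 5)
Your argument is correct and is essentially the paper's own proof, which consists of the single remark that one follows the proof of \cref{coro:MD} verbatim, i.e.\ continuous dependence on the data of $\dot{\U}$ from \cref{thm:sharp_regularityMD} gives the first chain of inequalities and the trace formula \cref{eq:CMD} then gives the Cauchy-data bound. You have merely unpacked the same steps (the a priori estimate for \textup{(MP$_\boldbeta$)}, the bounds on $f_\boldbeta$ and $m_\boldbeta$ via \cref{lemma:product} and the shift theorem, and absorption of the fixed incident data into $C$) in full detail, so there is nothing to correct.
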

\begin{proof}We follow verbatim the proof of \cref{coro:MD}.
\end{proof}
The source term prevents these sharp estimates to provide really interesting new regularity results compared to the classic cases in \cref{thm:regularityMD}. Still, it allows to consider the rather more general configuration of \cref{thm:sharp_shift}. Next, the result concerning the SD is as follows:
\begin{theorem}
\label{thm:sharp_regularitySD}
Consider the setting of \cref{thm:sharp_shift} along with a transformation $\bv \in W^{\fk+1,\infty}(\Gamma)$, $0 \leq \fk \leq \fr$ and set:
\be 
\tilde{\fs} : = \min\left( \fq-1, \fr - \half, \fk + \half\right).
\ee
Then, it holds that
\be
\U' \in \fH^{\tilde{\fs}+1}(\fD)\textup{, and } (\gamma \U)' \in \IH^{\tilde{\fs}}(\Gamma).
\ee
\end{theorem}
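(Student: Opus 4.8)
The plan is to replay the proof of \cref{thm:regularitySD} almost verbatim, with the single change of replacing the classic regularity exponent $\fq$ of the nominal field by the sharp one $\fs := \min(\fq,\fr+\frac{1}{2})$ coming from \cref{thm:sharp_shift}. Thus $\U \in \fH^{\fs+1}(\fD)$, which gives $\gamma_0\U \in H^{\fs+\frac{1}{2}}(\Gamma)$, $\gamma_1\U, \nabla_\Gamma\U \in H^{\fs-\frac{1}{2}}(\Gamma)$ and, using the mapping properties of the second-order trace together with the $W^{\fr,\infty}$-smoothness of $\bn$, $\gamma_2\U \in H^{\fs-\frac{3}{2}}(\Gamma)$, every surface-operator estimate being additionally capped by the boundary index exactly as in \cref{lemma:regularity_others,lemma:sharp_regularity_others}. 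Since (SP$_\boldbeta$) carries no source term, the argument reduces to tracking the regularity of the boundary data.

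First I would estimate the data. With \cref{lemma:product}, $(\bv\cdot\bn)\in W^{\min(\fr,\fk+1),\infty}(\Gamma)$, $\nabla_\Gamma(\bv\cdot\bn)\in W^{\min(\fr-1,\fk),\infty}(\Gamma)$, and the continuity of $\nabla_\Gamma$ and $\divg_\Gamma$ between the corresponding Sobolev scales, one finds $g_0 = -\gamma_1\U(\bv\cdot\bn) \in H^{\frac{1}{2}+\tilde{\fs}}(\Gamma)$; that $(\bv\cdot\bn)\nabla_\Gamma\U$ is $\min(\fk+1,\fr,\fq-\frac{1}{2})$-regular so that $g_1 \in H^{-\frac{1}{2}+\tilde{\fs}}(\Gamma)$ after applying $\divg_\Gamma$ (the $\kappa^2\gamma_0\U(\bv\cdot\bn)$ term being strictly more regular); and then, since the two extra terms of $g_2 = g_1 - \imath\eta(\bv\cdot\bn)\gamma_1\U - \imath\eta(\bv\cdot\bn)\mathfrak{H}\gamma_0\U$ are at least that regular (using $\mathfrak{H}\in W^{\fr-1,\infty}(\Gamma)$), also $g_2 \in H^{-\frac{1}{2}+\tilde{\fs}}(\Gamma)$. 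The transmission data $g_3^0$ and $g_3^1$ have exactly the structure of $g_0$ and $g_1$ up to the coefficient jumps, hence obey the same estimates for both $\U^0$ and $\U^1$. In all cases the data belong to $\IH^{\tilde{\fs}}(\Gamma)$, and because $\tilde{\fs}\leq\fr-\frac{1}{2}$, applying the sharp shift theorem to (SP$_\boldbeta$) yields $\U'\in\fH^{\tilde{\fs}+1}(\fD)$ together with $\gamma\U'\in\IH^{\tilde{\fs}}(\Gamma)$ and continuous dependence on $g_\boldbeta$.

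It then remains to read off the CSD from \cref{eq:CSD}. For the Dirichlet component, $\lambda' = \gamma_0\U' + \gamma_1\U(\bv\cdot\bn)$ with $\gamma_0\U'\in H^{\tilde{\fs}+\frac{1}{2}}(\Gamma)$ and the second summand in $H^{\frac{1}{2}+\tilde{\fs}}(\Gamma)$, so $\lambda'\in H^{\tilde{\fs}+\frac{1}{2}}(\Gamma)$. For the Neumann component, $\sigma' = \gamma_1\U' + \gamma_2\U(\bv\cdot\bn) - \nabla_\Gamma\U\cdot\nabla_\Gamma(\bv\cdot\bn)$, where $\gamma_1\U'\in H^{\tilde{\fs}-\frac{1}{2}}(\Gamma)$ and, by \cref{lemma:product}, the remaining two terms lie in $H^{\min(\fq-\frac{3}{2},\fr-1,\fk+1)}(\Gamma)$ and $H^{\min(\fq-\frac{1}{2},\fr-1,\fk)}(\Gamma)$ respectively, both at least $(\tilde{\fs}-\frac{1}{2})$-regular; hence $\sigma'\in H^{\tilde{\fs}-\frac{1}{2}}(\Gamma)$, i.e. $(\gamma\U)'\in\IH^{\tilde{\fs}}(\Gamma)$.

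The main obstacle I expect is the bookkeeping: keeping the three competing budgets — the data index $\fq$, the boundary index $\fr$, and the velocity index $\fk$ — correctly aligned, and in particular charging each occurrence of $\nabla_\Gamma$, $\divg_\Gamma$, $\mathfrak{H}$ and the second-order trace $\gamma_2$ against the $\fr$-budget so that the minima collapse exactly to $\tilde{\fs}=\min(\fq-1,\fr-\frac{1}{2},\fk+\frac{1}{2})$ and not to a smaller value. A secondary subtlety, as in the remarks following \cref{thm:regularityMD}, is that for small $\fq$ the quantity $\tilde{\fs}$ (hence the Sobolev order of the data) may be negative or land in a limit range; there one must read the boundary data in the appropriate (tilde-)Sobolev spaces so that the elliptic estimate for (SP$_\boldbeta$) still applies verbatim, while the verification $\tilde{\fs}\leq\fr-\frac{1}{2}$ guarantees no additional loss when invoking \cref{thm:sharp_shift} on the shape problem.
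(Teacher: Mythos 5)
Your proposal is correct and follows essentially the same route as the paper: replace the classic regularity index $\fq$ of the nominal solution by the sharp one $\min\left(\fq,\fr+\half\right)$ from \cref{thm:sharp_shift}, track the boundary data $g_{\boldbeta}$ through \cref{lemma:product}, and conclude by elliptic regularity for the source-free problem (SP$_{\boldbeta}$); your minima collapse to the same $\tilde{\fs}$ as the paper's. Your explicit treatment of the CSD components and of the transmission data $g_3^0,g_3^1$ is in fact slightly more complete than the paper's proof, which handles those by reference to the classic case.
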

\begin{proof}
We replicate the proof of \cref{thm:regularitySD} by replacing $\fq$ (classic) by $\min\left(\fq ,\fr+\half\right)$ (limit) as follows
\begin{align*}
g_0 : =&  - \gamma_1\U (\bv \cdot \bn)\\
& \Rrightarrow \min \left(\fq - \half, \fr, \fk + 1, \fr \right)\\
& \Rrightarrow \half + \min\left(\fq - 1, \fr - \half, \fk + \half\right) \\\noalign{\vspace{3pt}}
g_1 :=& \divg_{\Gamma} \big((\bv \cdot \bn) \nabla_{\Gamma} \U\big) + \kappa^2 \gamma_0\U ( \bv \cdot \bn )\\
&\Rrightarrow \min \left(\fk, \fr-1, \fq -\frac{3}{2}, \fr -1\right) \cap \left(\fr + \half, \fr + 1 , \fr, \fk + 1\right)\\
& \Rrightarrow -\half +\min\left( \fq - 1, \fr- \half,\fk  + \half\right)\\\noalign{\vspace{3pt}}
g_2 : = & g_1 -\imath \eta  (\bv \cdot \bn)\gamma_1 \U  -\imath \eta  (\bv \cdot \bn) \mathfrak{H} \gamma_0 \U\\
\Rrightarrow & \left(-\half + \tilde{\fs}\right) \cap \min\left(\fk+1, \fr\right) \cap \min\left(\fk+1,\fr, \fr-1, \fq+\half\right)\\
\Rrightarrow & \left(-\half + \tilde{\fs}\right) \cap \min\left(\fk+1, \fr-1, \fq+\half\right) \Rightarrow -\half+\tilde{\fs}.
\end{align*}
\end{proof}
\begin{corollary}\label{coro:sharp_SD}
Consider the setting of \cref{thm:sharp_regularitySD}. Then,
\be\label{eq:sharp_SD_cont}
\|\U' \|_{\fH^{\tilde{\fs}+1}(\fD)} \leq C \|g_\boldbeta\|_{\mH^{\tilde{\fs}}(\Gamma)} \leq {C} \|\bv\|_{W^{\fk+1,\infty}(\Gamma)},
\ee
and 
\be 
\|(\gamma \U)'\|_{\IH^{\tilde{\fs}} } \leq {C}\|\bv\|_{W^{\fk+1,\infty}(\Gamma)}.
\ee
\end{corollary}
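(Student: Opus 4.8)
The plan is to reproduce the proof of \cref{coro:SD} almost verbatim, the only change being that the regularity of the nominal field $\U$ is now supplied by the sharp shift theorem \cref{thm:sharp_shift} instead of the classical one. First I would invoke \cref{thm:sharp_regularitySD}, which already gives $\U' \in \fH^{\tilde{\fs}+1}(\fD)$ with $\tilde{\fs} = \min(\fq-1,\fr-\half,\fk+\half)$ for the source-free problem \textup{(SP$_\boldbeta$)}; since that problem has no volume source and is posed on a $C^{\fr,1}$ domain, and since $\tilde{\fs} \leq \fr-\half < \fr+\half$ places us inside its admissible range, the continuous-dependence part of \cref{thm:sharp_shift} yields
\be
\|\U'\|_{\fH^{\tilde{\fs}+1}(\fD)} \leq C \, \|g_\boldbeta\|_{\mH^{\tilde{\fs}}(\Gamma)}.
\ee

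Next I would estimate $\|g_\boldbeta\|_{\mH^{\tilde{\fs}}(\Gamma)}$ by rereading the $\Rrightarrow$-computation in the proof of \cref{thm:sharp_regularitySD}: each summand of $g_\boldbeta$ factorizes as a (tangential) trace of $\U$ --- bounded by $\|\xi^\inc\|_{\mH^{\min(\fq,\fr+\half)}(\Gamma)}$ through \cref{thm:sharp_shift} --- times one of the multipliers $(\bv\cdot\bn)$, $\nabla_\Gamma(\bv\cdot\bn)$, $\mathfrak{H}$, each of which sits in $W^{\fk+1,\infty}(\Gamma)$ or $W^{\fr,\infty}(\Gamma)$. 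Applying \cref{lemma:product} termwise, the $\|\U\|$-dependence is absorbed into the generic constant $C$ (allowed to depend on $\kappa$, $\eta$, $\mu$, $\Gamma$ and the incident field), which leaves $\|g_\boldbeta\|_{\mH^{\tilde{\fs}}(\Gamma)} \leq C\|\bv\|_{W^{\fk+1,\infty}(\Gamma)}$; chaining the two inequalities gives \cref{eq:sharp_SD_cont}. For the Cauchy-data bound, $(\gamma\U)' \in \IH^{\tilde{\fs}}(\Gamma)$ is already contained in \cref{thm:sharp_regularitySD} via the CSD formula \cref{eq:CSD}, whose two entries are sums of traces of $\U'$ --- controlled by $\|\U'\|_{\fH^{\tilde{\fs}+1}(\fD)}$ by trace continuity --- and products of traces or second-order traces of $\U$ with $(\bv\cdot\bn)$ or $\nabla_\Gamma(\bv\cdot\bn)$ --- controlled by \cref{lemma:product}. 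Hence $\|(\gamma\U)'\|_{\IH^{\tilde{\fs}}} \leq C(\|\U'\|_{\fH^{\tilde{\fs}+1}(\fD)} + \|\bv\|_{W^{\fk+1,\infty}(\Gamma)}) \leq C\|\bv\|_{W^{\fk+1,\infty}(\Gamma)}$, which is the claimed estimate.

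The only point requiring care --- not a genuine obstacle --- is the bookkeeping of indices in the limit regime: one must check that $\tilde{\fs}$ stays within the window where \cref{thm:sharp_shift} and the limiting Dirichlet-to-Neumann continuity hold, and that the multipliers entering $g_\boldbeta$ and \cref{eq:CSD} carry enough $W^{\cdot,\infty}$-regularity for \cref{lemma:product} to apply. Both follow from $\tilde{\fs} \leq \min(\fr-\half,\fk+\half)$ together with $\bv \in W^{\fk+1,\infty}(\Gamma)$, and were in effect already verified inside the $\Rrightarrow$-chains of \cref{thm:sharp_regularitySD}. Consequently, exactly as for \cref{coro:sharp_MD}, the corollary reduces to following verbatim the proof of \cref{coro:SD}.
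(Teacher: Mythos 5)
Your argument is exactly the paper's: the first inequality is continuous dependence on the data for the source-free problem (SP$_\boldbeta$), the second follows from the multiplier estimates already carried out in the $\Rrightarrow$-chains of \cref{thm:sharp_regularitySD}, and the Cauchy-data bound then follows from the CSD formula --- the paper simply states that one follows verbatim the proof of \cref{coro:SD}. Your version is a correct, more explicit unpacking of that same route.
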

\begin{proof}
We follow verbatim the proof of \cref{coro:SD}.
\end{proof}

\section{Banach space embedding and Fr\'echet differentiability}
\label{sec:Banach}
Following the spirit of the explicit shape calculus, we introduce tools to embed the directional derivative into suitable Banach spaces on fixed domains and to consider Fr\'echet differentiability. To this end, we extend the domain transformations (\ref{eq:transformed_boundary}):
\be
\Gamma_\theta := \{ \bx + \btheta : \bx \in \Gamma \} = (\bI + \btheta)(\Gamma) =  T(\Gamma),
\ee
with $\btheta \in W^{k+1,\infty}(\Gamma; \IR^d)$. Also, we introduce the open $\eps$-neighborhood for $\eps >0$:
\be
\mfT_{\eps}:= \{ \btheta : \|\btheta\|_{W^{k+1,\infty}(\Gamma;\IR^d)}  < \eps \}.
\ee
One can extend the spaces to $D$, and also to $\IR^d$ with $B_R(0)$. These definitions will allow to deduce Fr\'echet differentiability in $\btheta=0$. Next, we introduce the Fr\'echet differentiability in \cref{def:Frechet}. As in \cref{fig:GD}, we introduce positive indices $\fr,\fq,\fk$ related to the domain regularity, data and transformation field, respectively.

\begin{definition}[Fr\'echet differentiability for DDs]\label{def:Frechet}
Given Banach vector spaces $\fX,\fY,\fZ$ and $\mfT \subseteq \fX$ an open subset of $\fX$, we set $\U^\textup{inc}(\fq)$ and $D$. Then, we consider $\btheta \in \mfT$ associated to a domain $D_\theta(\fr)$. Consequently, we introduce a quantity of interest $\U_\theta (\fr,\fq)\in Z$.
For a perturbation $\bh(\fk) \in \fY $ one seeks whether the Fr\'echet derivative at $\btheta=0$ along direction $\bh$:
\begin{align*}
\D\U : \mfT& \to \mB(\fY,\fZ) \\
\btheta & \mapsto  \D \U_\theta :=\bh \mapsto \langle \D \U_\theta,\bh\rangle = \lim_{\| \bh \|_\fY\to 0}\frac{\| \U_\theta(\bx + \bh) - \U_\theta(\bx)\|_\fZ}{\|\bh\|_\fY},
\end{align*}
if well-defined.
\end{definition}
Notice the parameter dependence of $\D \U_\theta (\fr,\fk,\fq)$.

\begin{remark} In some applications such as domain uncertainty quantification for computational electromagnetics \cite{aylwin2020domain}, it is judicious to embed the domain $D$ itself --and the transformation $T$-- into Banach spaces by defining the space of admissible domains. A simple manner is the following:
\be
\mathfrak{G}:= \{ T:  T(\Gamma)\in C^{r,1}\}.
\ee
Therefore, one can introduce the following $\eps$-neighborhood for $\eps >0$:
\be
\mathfrak{G}_{\eps} := \{ \tilde{T} \in W^{k+1,\infty}(\Gamma) : \exists T \in \mathfrak{T}, ~ \|\tilde{T} - T \|_{W^{k+1,\infty} (\Gamma)} \leq \eps \}.
\ee
The latter allows to prove Fr\'echet differentiability in any $T\in \mathfrak{G}_\eps$. Remark the connection between differentiability in $T$ and in $\theta=0$.
Finally, we can extend all these notions to perturbations in the complex plane, by replacing $W^{k+1,\infty}(\Gamma)\equiv W^{k+1,\infty}(\Gamma ; \IR^d)$ \cite{cohen2016shape}. Notice that embeddings in this subsection can be set up at ``G\^ateaux level", by keeping the directional transformation instead.
\end{remark}
Back to \cref{sec:main_results}, we have now proved that for $\fC$ a $C^{\fr,1}$-domain, $\fr,\fq$ and $\fs:=\dot{\fs}$ (resp.~$\fs:=\tilde{\fs}$) such as is \cref{thm:MD} (resp.~\cref{thm:SD}), the MD (resp.~SD) belongs to $\fZ:= \fH^{\fs+1}(\fD)$ and the CMD (resp.~CSD) belongs to $\fZ_\Gamma:=\IH^\fs(\Gamma)$. Finally, the norms in theses spaces are wavenumber explicit and exhibit the same rates as $C$ in \cref{thm:MD,thm:SD}.
\subsection{Continuous data dependence}
Next, continuous dependence on the data in \cref{coro:sharp_MD} and \cref{coro:sharp_SD} provides direct boundedness estimates for the DDs function to $\bv$, ensuring directional differentiability. Finally, we embed the domain transformation and replace the directional transformation $\bv$ by a Fr\'echet transformation $\btheta$, embedded in $\fY:=W^{\fk+1,\infty}(\fD)$. Therefore, (S1) applies with $\btheta$ instead of $\bv$ and we deduce directly that all quantities in \cref{sec:main_results} are Fr\'echet differentiable in $\btheta \in \fX :=W^{\fr+1,\infty}(\fD)$ as a mapping from $\fY \to \fZ$ (resp.~$\fY \to \fZ_\Gamma$ for the CDDs). Last point is the continuity of the Fr\'echet derivative in $\fY$. 
{}
To begin with, $\U_\theta \circ T$ is Fr\'echet differentiable, hence continuous, providing $\|\U_\theta \circ T - \U \|_\fZ \to 0$ for $\btheta \to 0$. Next, as $\dot{\U}$ and $\dot{\U_\theta}$ are solution of a BVP:
\be 
\|\dot{\U}_\theta- \dot{\U}  \|_{\fZ} \leq C \| \U_\theta\circ T - \U \|_\fZ \|\bh\|_{\fY} \to 0 \textup{ as } \btheta \to 0. 
\ee
The same proof applies to the CDD in $\fZ_\Gamma$. Finally, concerning the SD, we introduce all appropriate extensions, and observe that $\U_\theta$ is continuous at a neighborhood of $\btheta = 0 \in \fY$. Next, provided that $\U_\theta'$ and $\U'$ solve BVPs, their difference:
\be 
\|\tilde{\U}'_\theta - \U'\|_{\fZ} \leq C \| \tilde{\U}_\theta - \U \|_\fZ \|\bh\|_\fY  \to 0 \textup{ as } \btheta \to 0.
\ee
Therefore, application of the mean value theorem \cite{djellouli1999continuous} provides the final stability estimates. For the sake of understanding, we sum up in  \cref{tab:Final} the equivalence between $\fC,\fX,\fY,\fZ,\fZ_\Gamma$ such as defined in this section, and the spaces they relate to in \cref{thm:MD,thm:SD}.
\begin{table}[t]
\begin{center}
\renewcommand\arraystretch{1.4}
\resizebox{13.4cm}{!} {
\begin{tabular}{|>{\centering\arraybackslash}m{.8cm}|
|>{\centering\arraybackslash}m{.8cm}
    |>{\centering\arraybackslash}m{1.8cm}
    |>{\centering\arraybackslash}m{1.8cm}
    |>{\centering\arraybackslash}m{1.3cm}
    |>{\centering\arraybackslash}m{1cm}|
    |>{\centering\arraybackslash}m{0.8cm}
    |>{\centering\arraybackslash}m{1.2cm}
    |>{\centering\arraybackslash}m{1.4cm}|
    |>{\centering\arraybackslash}m{1.6cm}
    |>{\centering\arraybackslash}m{4cm}|
    } \hline   
  \multirow{2}{*}{Case} & \multirow{2}{*}{$\fC$} & \multirow{2}{*}{$\fX$} &   \multirow{2}{*}{$\fY$} & \multirow{2}{*}{$\fZ$}  & \multirow{2}{*}{$\fZ_\Gamma$}& \multirow{2}{*}{$\fr$}  & \multirow{2}{*}{$\fq$}  & \multirow{2}{*}{$\fk$} & \multirow{2}{*}{$\beta$} & \multirow{2}{*}{$\fs$}  \\ 
  &&&&& && &  & &\\ \hline\hline
  \multirow{2}{*}{MD}  & \multirow{2}{*}{$C^{\fr,1}$}& \multirow{2}{*}{$W^{\fr+1,\infty}(D)$}  & \multirow{2}{*}{$W^{\fk+1,\infty}(D)$} & \multirow{2}{*}{$\fH^{\fs+1}(\fD)$} & \multirow{2}{*}{$\IH^{\fs}(\Gamma)$} & \multirow{2}{*}{$\IN_1$}  & \multirow{2}{*}{$\geq 1$} & \multirow{2}{*}{$1\leq \fk\leq \fr $}  &  $\beta = 0$ & $\min\left(\fq,\fk\right)$ \\ \cline{10-11}
& &&&& && && $\beta=1,2,3$ & $\min\left(\fq,\fr-\half,\fk\right)$\\ \hline \hline
  \multirow{2}{*}{SD}  & \multirow{2}{*}{$C^{\fr,1}$} & \multirow{2}{*}{$W^{\fr+1,\infty}(\Gamma)$} & \multirow{2}{*}{$W^{\fk+1,\infty}(\Gamma)$} & \multirow{2}{*}{$\fH^{\fs+1}(\fD)$} & \multirow{2}{*}{$\IH^{\fs}(\Gamma)$} & \multirow{2}{*}{$\IN_1$}  & \multirow{2}{*}{$\geq 0$} & \multirow{2}{*}{$\leq \fr $}  &\multirow{2}{*}{$-$} & \multirow{2}{*}{$\min \left(\fq-1, \fr-\half, \fk+\half\right) $}  \\
& &&& && && && \\ \hline\end{tabular}}
\end{center} 
\caption{Summary of the equivalence between function spaces in (S2) and in \cref{sec:main_results}.}
\label{tab:Final}
\end{table}

\section{Concluding Remarks}
\label{sec:concl}
In this work, we tackled a family of Helmholtz scattering problems and proved new differentiability results concerning the domain-to-solution and domain-to-Cauchy data mapping via explicit shape calculus, the latter relying on the characterization of the DDs as a BVP and on adapted functional analysis. This framework allows to tackle Lipschitz domains, domains with corners and wavenumber analysis. Those will be investigated elsewhere.

Our contribution is key to prove finer Fr\'echet differentiability \cite{Frechet,costabel2012shapeO,costabel2012shape} or shape holomorphy results \cite{hiptmair2018large,cohen2016shape,jerez2017electromagnetic}. Besides, it paves the way towards a framework allowing to deal with general BVPs and their boundary reduction. Moreover, the formulas for the DDs of Cauchy data allow for numerous applications in first-order shape boundary methods \cite{escapil2020helmholtz}. Recent works \cite{hiptmair2013shape,hiptmair2017shape} characterized the shape derivative for a variety of scattering problems appearing in acoustics and electromagnetism. These results rely on exterior calculus and differential forms, which could be an elegant manner to extend the results proposed in this work.

\section*{Acknowledgments}
This work was supported in part by Fondecyt Regular 1171491.

\bibliographystyle{siamplain}
\bibliography{references}

\begin{thebibliography}{10}

\bibitem{Allaire}
{\sc G.~Allaire and M.~Schoenauer}, {\em Conception optimale de structures},
  vol.~58, Springer, 2007.

\bibitem{amrouche2011regularity}
{\sc C.~Amrouche and M.~A. Rodriguez-Bellido}, {\em On the regularity for the
  {L}aplace equation and the {S}tokes system},  (2011).

\bibitem{aylwin2020domain}
{\sc R.~Aylwin, C.~Jerez-Hanckes, C.~Schwab, and J.~Zech}, {\em Domain
  uncertainty quantification in computational electromagnetics}, {SIAM}/{ASA}
  {J}ournal on {U}ncertainty {Q}uantification, 8 (2020), pp.~301--341.

\bibitem{barucq2017frechet}
{\sc H.~Barucq, R.~Djellouli, and E.~Estecahandy}, {\em Fr{\'e}chet
  differentiability of the elasto-acoustic scattered field with respect to
  {L}ipschitz domains}, Mathematical {M}ethods in the {A}pplied {S}ciences, 40
  (2017), pp.~404--414.

\bibitem{barucq2018mathematical}
{\sc H.~Barucq, R.~Djellouli, E.~Estecahandy, and M.~Moussaoui}, {\em
  Mathematical determination of the fr{\'e}chet derivative with respect to the
  domain for a fluid-structure scattering problem: {C}ase of polygonal-shaped
  domains}, {SIAM} {J}ournal on {M}athematical {A}nalysis, 50 (2018),
  pp.~1010--1036.

\bibitem{bochniak2002domain}
{\sc M.~Bochniak and F.~Cakoni}, {\em Domain sensitivity analysis of the
  acoustic far-field pattern}, {M}athematical {M}odels and {M}ethods in
  {A}pplied {S}ciences, 25 (2002), pp.~595--613.

\bibitem{chaumont2018wavenumber}
{\sc T.~Chaumont-Frelet and S.~Nicaise}, {\em Wavenumber explicit convergence
  analysis for finite element discretizations of general wave propagation
  problem}, IMA Journal of Numerical Analysis,  (2018).

\bibitem{cohen2016shape}
{\sc A.~Cohen, C.~Schwab, and J.~Zech}, {\em Shape {H}olomorphy of the
  stationary {N}avier-{S}tokes equations}, {SIAM} {J}ournal on {M}athematical
  {A}nalysis, 50 (2016), pp.~1720--1752.

\bibitem{colton2012inverse}
{\sc D.~Colton and R.~Kress}, {\em Inverse acoustic and electromagnetic
  scattering theory}, vol.~93 of Applied {M}athematical {S}ciences, Springer
  {S}cience \& {B}usiness {M}edia, 2012.

\bibitem{costabel1988boundary}
{\sc M.~Costabel}, {\em Boundary integral operators on {L}ipschitz domains:
  elementary results}, {SIAM} journal on {M}athematical {A}nalysis, 19 (1988),
  pp.~613--626.

\bibitem{costabel1990remark}
{\sc M.~Costabel}, {\em A remark on the regularity of solutions of {M}axwell's
  equations on {L}ipschitz domains}, Mathematical {M}ethods in the {A}pplied
  Sciences, 12 (1990), pp.~365--368.

\bibitem{costabel2019limit}
{\sc M.~Costabel}, {\em On the limit {S}obolev regularity for {D}irichlet and
  {N}eumann problems on {L}ipschitz domains}, Mathematische {N}achrichten, 292
  (2019), pp.~2165--2173.

\bibitem{costabel2012shapeO}
{\sc M.~Costabel and F.~Le~Lou{\"e}r}, {\em Shape derivatives of boundary
  integral operators in electromagnetic scattering. {P}art i: {S}hape
  differentiability of pseudo-homogeneous boundary integral operators},
  Integral Equations and Operator Theory, 72 (2012), pp.~509--535.

\bibitem{costabel2012shape}
{\sc M.~Costabel and F.~Le~Lou{\"e}r}, {\em Shape derivatives of boundary
  integral operators in electromagnetic scattering. {P}art {II}: {A}pplication
  to scattering by a homogeneous dielectric obstacle}, Integral {E}quations and
  {O}perator {T}heory, 73 (2012), pp.~17--48.

\bibitem{delfour2011shapes}
{\sc M.~C. Delfour and J.-P. Zol{\'e}sio}, {\em Shapes and geometries: metrics,
  analysis, differential calculus, and optimization}, vol.~22, {SIAM}, 2011.

\bibitem{djellouli1999characterization}
{\sc R.~Djellouli and C.~Farhat}, {\em On the characterization of the
  {F}r{\'e}chet derivative with respect to a {L}ipschitz domain of the acoustic
  scattered field}, Journal of mathematical analysis and applications, 238
  (1999), pp.~259--276.

\bibitem{djellouli1999continuous}
{\sc R.~Djellouli, C.~Farhat, J.~Mandel, and P.~Vanek}, {\em Continuous
  {F}r{\'e}chet differentiability with respect to a {L}ipschitz domain and a
  stability estimate for direct acoustic scattering problems}, {IMA} journal of
  applied mathematics, 63 (1999), pp.~51--69.

\bibitem{dolz2018hierarchical}
{\sc J.~D{\"o}lz and H.~Harbrecht}, {\em Hierarchical matrix approximation for
  the uncertainty quantification of potentials on random domains}, Journal of
  {C}omputational {P}hysics, 371 (2018), pp.~506--527.

\bibitem{eppleroptimal}
{\sc K.~Eppler}, {\em Optimal shape design for elliptic equations via
  {BIE}-methods}, International {J}ournal of {A}pplied {M}athematics and
  {C}omputer {S}cience, 10 (2000).

\bibitem{escapil2020helmholtz}
{\sc P.~Escapil-Inchausp{\'e} and C.~Jerez-Hanckes}, {\em Helmholtz scattering
  by random domains: first-order sparse boundary element approximation}, {SIAM}
  {J}ournal on {S}cientific {C}omputing, 42 (2020), pp.~A2561--A2592.

\bibitem{evans2010partial}
{\sc L.~C. Evans}, {\em Partial differential equations}, vol.~19 of Graduate
  {S}tudies in {M}athematics, American {M}athematical {S}ociety, second~ed.,
  2010.

\bibitem{Grisvard}
{\sc P.~Grisvard}, {\em Elliptic problems in nonsmooth domains}, vol.~69,
  {SIAM}, 2011.

\bibitem{hagemann2019solving}
{\sc F.~Hagemann, T.~Arens, T.~Betcke, and F.~Hettlich}, {\em Solving inverse
  electromagnetic scattering problems via domain derivatives}, Inverse
  {P}roblems,  (2019).

\bibitem{sparse3}
{\sc H.~Harbrecht, R.~Schneider, and C.~Schwab}, {\em Sparse second moment
  analysis for elliptic problems in stochastic domains}, Numerische
  {M}athematik, 109 (2008), pp.~385--414.

\bibitem{antoine_var}
{\sc A.~Henrot and M.~Pierre}, {\em Variation et optimisation de formes: une
  analyse g{\'e}om{\'e}trique}, vol.~48, Springer {S}cience \& {B}usiness
  {M}edia, 2006.

\bibitem{hettlich1995frechet}
{\sc F.~Hettlich}, {\em Fr{\'e}chet derivatives in inverse obstacle
  scattering}, Inverse {P}roblems, 11 (1995), p.~371.

\bibitem{hettlich1998frechet}
{\sc F.~Hettlich}, {\em Fréchet derivatives in inverse obstacle scattering},
  Inverse {P}roblems, 14 (1998), p.~209.

\bibitem{hiptmair2013shape}
{\sc R.~Hiptmair and J.~Li}, {\em Shape {D}erivatives in differential forms
  {I}: an intrinsic perspective}, Annali di matematica pura ed applicata, 192
  (2013), pp.~1077--1098.

\bibitem{hiptmair2017shape}
{\sc R.~Hiptmair and J.~Li}, {\em Shape derivatives in differential forms {II}:
  shape derivatives for scattering problems}, Foundations of computational
  mathematics,  (2017).

\bibitem{hiptmair2018large}
{\sc R.~Hiptmair, L.~Scarabosio, C.~Schillings, and C.~Schwab}, {\em Large
  deformation shape uncertainty quantification in acoustic scattering},
  Advances in {C}omputational {M}athematics, 44 (2018), pp.~1475--1518.

\bibitem{jerez2017electromagnetic}
{\sc C.~Jerez-Hanckes, C.~Schwab, and J.~Zech}, {\em Electromagnetic wave
  scattering by random surfaces: {S}hape holomorphy}, {M}athematical {M}odels
  and {M}ethods in {A}pplied {S}ciences, 27 (2017), pp.~2229--2259.

\bibitem{jerison1981dirichlet}
{\sc D.~S. Jerison and C.~E. Kenig}, {\em The {D}irichlet problem in non-smooth
  domains}, Annals of mathematics, 113 (1981), pp.~367--382.

\bibitem{jerison1981neumann}
{\sc D.~S. Jerison and C.~E. Kenig}, {\em The {N}eumann problem on {L}ipschitz
  domains}, Bulletin of the {A}merican {M}athematical {S}ociety, 4 (1981),
  pp.~203--207.

\bibitem{kirsch1993domain}
{\sc A.~Kirsch}, {\em The domain derivative and two applications in inverse
  scattering theory}, Inverse problems, 9 (1993), p.~81.

\bibitem{kleemann2012shape}
{\sc N.~Kleemann}, {\em Shape derivatives in {K}ondratiev spaces for conical
  diffraction}, Mathematical Methods in the Applied Sciences, 35 (2012),
  pp.~1365--1391.

\bibitem{MacLean}
{\sc W.~McLean and W.~C.~H. McLean}, {\em Strongly elliptic systems and
  boundary integral equations}, Cambridge university press, 2000.

\bibitem{moiola2019acoustic}
{\sc A.~Moiola and E.~A. Spence}, {\em Acoustic transmission problems:
  wavenumber-explicit bounds and resonance-free regions}, Mathematical {M}odels
  and {M}ethods in {A}pplied {S}ciences, 29 (2019), pp.~317--354.

\bibitem{Nedelec}
{\sc J.-C. N{\'e}d{\'e}lec}, {\em Acoustic and electromagnetic equations:
  integral representations for harmonic problems}, vol.~144, Springer {S}cience
  \& Business Media, 2001.

\bibitem{Frechet}
{\sc R.~Potthast}, {\em Fréchet differentiability of boundary integral
  operators in inverse acoustic scattering}, Inverse {P}roblems, 10 (1994),
  p.~431.

\bibitem{potthast1996frechet}
{\sc R.~Potthast}, {\em Fr{\'e}chet differentiability of the solution to the
  acoustic {N}eumann scattering problem with respect to the domain}, Journal of
  {I}nverse and {I}ll-Posed Problems, 4 (1996), pp.~67--84.

\bibitem{sauter2018stability}
{\sc S.~Sauter and C.~Torres}, {\em Stability estimate for the {H}elmholtz
  equation with rapidly jumping coefficients}, Zeitschrift f{\"u}r angewandte
  Mathematik und Physik, 69 (2018), p.~139.

\bibitem{Sokolowski}
{\sc J.~Sokolowski and J.-P. Zolesio}, {\em Introduction to shape
  optimization}, in Introduction to {S}hape {O}ptimization, Springer, 1992,
  pp.~5--12.

\bibitem{spence2014wavenumber}
{\sc E.~A. Spence}, {\em Wavenumber-explicit bounds in time-harmonic acoustic
  scattering}, {SIAM} {J}ournal on {M}athematical {A}nalysis, 46 (2014),
  pp.~2987--3024.

\end{thebibliography}
\end{document}